\newtheorem{theorem}{Theorem}[section]
\newtheorem{conj}[theorem]{Conjecture}
\newtheorem{definition}[theorem]{Definition}
\newtheorem{lemma}[theorem]{Lemma}
\newtheorem{claim}[theorem]{Claim}
\newtheorem{prop}[theorem]{Proposition}
\newtheorem{cor}[theorem]{Corollary}
\newtheorem{prp}[theorem]{Property}
\author{\ \\ \\
Glenn Hurlbert\thanks{\texttt{hurlbert@asu.edu} (Corresponding author)}
\ and
Vikram Kamat\thanks{\texttt{vikram.kamat@asu.edu}}\\
{\small School of Mathematical and Statistical Sciences}\\
{\small Arizona State University, Tempe, Arizona 85287-1804}\\ \\ \\
}
\title{Erd\"{o}s-Ko-Rado theorems\\for chordal and bipartite graphs}
\begin{document}

\maketitle

\newpage

\begin{abstract}
One of the more recent generalizations of the Erd\"{o}s-Ko-Rado
theorem, formulated by Holroyd, Spencer and Talbot \cite{hst}, defines the Erd\"os-Ko-Rado property for
graphs in the following manner: for a graph $G$, vertex $v\in G$ and some integer $r\geq 1$, denote
the family of independent $r$-sets of $V(G)$ by
$\mathcal{J}^{(r)}(G)$ and the subfamily $\{A\in
\mathcal{J}^{(r)}(G):v\in A\}$ by $\mathcal{J}_v^{(r)}(G)$, called a star.
Then, $G$ is said to be $r$-EKR if no intersecting subfamily
of $\mathcal{J}^{(r)}(G)$ is larger than the largest star in
$\mathcal{J}^{(r)}(G)$.
In this paper, we prove that if $G$ is a disjoint union of chordal graphs, including at least one singleton,
then $G$ is $r$-EKR if $r\leq \frac{\mu(G)}{2}$, where $\mu(G)$ is the minimum size of a maximal independent set.

We will also prove Erd\"{o}s-Ko-Rado results for chains of complete graphs,
which are a class of chordal graphs obtained by blowing up edges of a path into complete graphs.
We also consider similar problems for ladder graphs and trees, and prove preliminary results for these graphs.

\noindent{\bf Key words.}
intersecting family, star, independent sets, chordal graphs, trees
\end{abstract}

\pagebreak

\section{Introduction} Let $X=[n]=\{1,\ldots,n\}$ be a
set of size $n$. We denote the power set of $X$ by
$\mathcal{P}=\mathcal{P}(X)=\{A|A\subseteq X\}$. A family
$\mathcal{A}$ is a collection of sets in $\mathcal{P}$.
$\mathcal{A}$ is said to be an intersecting family if $A,B\in
\mathcal{A}$ imply $A\cap B \neq \emptyset$. An intersecting
$r$-uniform hypergraph is an intersecting family where all sets have
cardinality $r$. The problem of finding how large an intersecting
family can be is trivial: an intersecting family can have size at
most $2^{n-1}$ with $\mathcal{P}(X_x)=\{A: A\subset X,x\in A\}$
being one of the extremal families.

If we consider this problem for intersecting $r$-uniform
hypergraphs, we see that the problem is trivial for $n\leq 2r$
because the set of all $r$-sets in $X$, denoted by $X^{(r)}$, is
intersecting for $n<2r$, and if $n=2r$, every family
contains exactly one of any two complimentary sets, so the maximum size is
at most
$\frac{1}{2}{n \choose r}={n-1 \choose r-1}.$

If $n>2r$, then the problem is solved by the Erd\"{o}s-Ko-Rado
Theorem \cite{ekr}, one of the seminal results in extremal set
theory.

\begin{theorem}{(Erd\"{o}s-Ko-Rado theorem \cite{ekr})}\label{ekrt}
Let $2\leq r< n/2$ and let $\mathcal{A}\subset X^{(r)}$ be
an intersecting hypergraph. Then
$$|\mathcal{A}|\leq {n-1 \choose r-1}$$ with equality iff
$\mathcal{A}=X_x^{(r)}=\{A|A\in X^{(r)}, x\in A\}$ for some $x\in
X$.
\end{theorem}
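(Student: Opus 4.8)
The plan is to prove the bound by Katona's cyclic-permutation (``circle'') method, which gives the extremal count in two or three lines of double counting once the right combinatorial lemma is in place. First I would set up the counting frame: place the $n$ elements of $X$ around a circle. There are $(n-1)!$ cyclic orderings, and in each ordering the $r$-sets formed by $r$ consecutive elements are the \emph{arcs}, of which there are exactly $n$ per ordering. Treating a fixed $r$-set $S$ as a single block shows that $S$ appears as an arc in exactly $r!(n-r)!$ of the cyclic orderings, which is consistent since $\binom{n}{r}\cdot r!(n-r)! = n! = (n-1)!\cdot n$ equals the total number of (ordering, arc) incidences.

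The combinatorial heart is the lemma that, when $2r<n$, any single cyclic ordering contains at most $r$ arcs of $\mathcal{A}$. To prove it I would fix one arc $A\in\mathcal{A}$ (if none exists the bound $0\le r$ is trivial) and note that, apart from $A$ itself, exactly $2(r-1)$ arcs meet $A$, namely those overlapping it from the left or the right. I would pair the left-overlapping arc ending at position $i$ with the right-overlapping arc starting at position $i+1$, for $i=1,\dots,r-1$; each such pair occupies $2r\le n$ consecutive positions and so consists of two \emph{disjoint} arcs. Since $\mathcal{A}$ is intersecting it can contain at most one arc from each of these $r-1$ pairs, giving at most $(r-1)+1=r$ arcs of $\mathcal{A}$ in the ordering.

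Combining the lemma with the double count finishes the inequality. Counting the pairs $(C,A)$ with $C$ a cyclic ordering and $A\in\mathcal{A}$ an arc of $C$, one side equals $|\mathcal{A}|\cdot r!(n-r)!$ while the other is at most $(n-1)!\cdot r$, so
$$|\mathcal{A}|\ \le\ \frac{r\,(n-1)!}{r!\,(n-r)!}\ =\ \frac{(n-1)!}{(r-1)!\,(n-r)!}\ =\ \binom{n-1}{r-1}.$$

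The step I expect to be the main obstacle is the characterization of equality, since the circle method yields the number cleanly but extremality only after a rigidity analysis. Equality forces every cyclic ordering that contains an arc of $\mathcal{A}$ to contain exactly $r$ of them, so I would first determine the equality case of the lemma (a maximal intersecting bundle of arcs in a fixed circle shares a common element), and then propagate this local structure across overlapping orderings, using $r\ge 2$ and $2r<n$, to force a single element $x$ into every member of $\mathcal{A}$, i.e.\ $\mathcal{A}=X_x^{(r)}$. As a fallback, should the uniqueness argument via the circle prove delicate, I would instead run the shifting (compression) method: the $(i,j)$-shifts preserve both $|\mathcal{A}|$ and the intersecting property, reduce $\mathcal{A}$ to a compressed family, and both the bound and the extremal star can be read off directly from the compressed configuration.
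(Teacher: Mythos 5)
This theorem is quoted by the paper from \cite{ekr} as background and is never proved in the text, so there is no in-paper argument to compare against; your proposal should therefore be judged on its own, and as a proof of the inequality it is complete and correct. Your counting frame is right: $(n-1)!$ cyclic orders, $n$ arcs per order, and a fixed $r$-set occurring as an arc in exactly $r!(n-r)!$ orders, with the consistency check $\binom{n}{r}r!(n-r)!=n!$ confirming it. The key lemma is also correctly proved: with $A$ fixed, the only arcs meeting $A$ are the $2(r-1)$ arcs overlapping it from the left or right, and your pairing of the arc ending at position $i$ with the arc starting at position $i+1$ does produce $r-1$ pairs of disjoint arcs precisely because each pair spans $2r\leq n$ consecutive positions (the hypothesis $r<n/2$ gives $2r<n$, so this is safe), whence at most $r$ arcs of $\mathcal{A}$ per cyclic order and $|\mathcal{A}|\leq r(n-1)!/\bigl(r!(n-r)!\bigr)=\binom{n-1}{r-1}$. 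This is Katona's cycle method, which is genuinely different from the original shifting-plus-induction proof of Erd\H{o}s, Ko and Rado; what it buys is a short, self-contained double count, at the cost of making the uniqueness statement harder to extract.

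The equality characterization is the one place where your text is a plan rather than a proof, and you correctly flag it. Your local lemma is true for $n>2r$: pairwise intersection forces the selection of one arc from each pair to be nested, so the $r$ arcs in a tight cyclic order all contain a common boundary point of $A$; but the propagation of this local structure across all cyclic orders to a global star is the fiddly part and is not carried out. Your shifting fallback also needs care, since compressions can turn a non-star into a star, so one cannot simply read uniqueness off the compressed family. The cleanest standard completion is via the Hilton--Milner theorem \cite{hm}: for $n>2r$ any intersecting family not contained in a star has size at most $\binom{n-1}{r-1}-\binom{n-r-1}{r-1}+1<\binom{n-1}{r-1}$, which immediately yields the equality clause. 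With that (or a worked-out rigidity argument) supplied, your proof is complete.
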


There have been generalizations of the theorem in different
directions. Deza and Frankl \cite{df} give
a very nice survey of the EKR-type results proved in the 1960s, 70's and
80's. In this paper, we concern ourselves with the generalization for graphs, formulated by Holroyd, Spencer and Talbot in \cite{hst}.
\subsection{Erd\"{o}s-Ko-Rado property for graphs}
The Erd\"{o}s-Ko-Rado property for graphs is defined in the following manner.

For a graph $G$, vertex $v\in V(G)$ and some integer $r\geq 1$,
denote the family of independent $r$-sets of $V(G)$ by
$\mathcal{J}^{(r)}(G)$ and the subfamily $\{A\in
\mathcal{J}^{(r)}(G):v\in A\}$ by $\mathcal{J}_v^{(r)}(G)$, called a
star. Then, $G$ is said to be $r$-EKR if no intersecting subfamily
of $\mathcal{J}^{(r)}(G)$ is larger than the largest star in
$\mathcal{J}^{(r)}(G)$. If every maximum sized intersecting
subfamily of $\mathcal{J}^{(r)}(G)$ is a star, then $G$ is said to
be strictly $r$-EKR. This can be viewed as the Erd\"{o}s-Ko-Rado
property on a ground set, but with additional structure on this
ground set. In fact, the Erd\"{o}s-Ko-Rado theorem can be restated
in these terms as follows.

\begin{theorem}{(Erd\"{o}s-Ko-Rado theorem \cite{ekr})}
The graph on $n$ vertices with no edges is $r$-EKR if $n\geq 2r$ and
strictly $r$-EKR if $n>2r.$
\end{theorem}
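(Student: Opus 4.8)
The plan is to observe that for the empty graph $G$ on $n$ vertices, every subset of $V(G)$ is independent, so the family of independent $r$-sets coincides with the full family $X^{(r)}$ of all $r$-subsets of $[n]$. First I would pin down the size of the largest star: for any vertex $v$, the star $\mathcal{J}_v^{(r)}(G)$ consists of all $r$-subsets of $[n]$ containing $v$, of which there are exactly $\binom{n-1}{r-1}$, independently of the choice of $v$. Thus every star has the same size $\binom{n-1}{r-1}$, and the assertion that $G$ is $r$-EKR is precisely the assertion that no intersecting subfamily of $X^{(r)}$ has size exceeding $\binom{n-1}{r-1}$.

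For the case $n > 2r$, this is immediate from Theorem \ref{ekrt}. Any intersecting subfamily $\mathcal{A} \subseteq X^{(r)}$ satisfies $|\mathcal{A}| \leq \binom{n-1}{r-1}$, with equality if and only if $\mathcal{A} = X_x^{(r)}$ for some $x \in X$. Translating back into the graph language, every maximum-sized intersecting subfamily is a star, so $G$ is strictly $r$-EKR, as claimed. For the boundary case $n = 2r$, I would instead invoke the complementary-pair counting already noted in the introduction: when $n = 2r$ we have $\binom{n-1}{r-1} = \tfrac{1}{2}\binom{n}{r}$, and since an intersecting family can contain at most one set from each complementary pair $\{A,\,[n]\setminus A\}$, its size is bounded by $\tfrac{1}{2}\binom{n}{r} = \binom{n-1}{r-1}$. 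This matches the star size, so $G$ is $r$-EKR.

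There is no genuine obstacle here, since the result is essentially a reformulation of the classical theorem; the only point requiring care is the reason the two regimes are stated differently. When $n = 2r$, strictness genuinely fails: selecting one set freely from each complementary pair produces many intersecting families of the maximum size $\binom{n-1}{r-1}$ that are not stars, which is exactly why the theorem claims only the $r$-EKR property, and not the strict version, in that case. I would also remark that the degenerate regime $n < 2r$, in which $X^{(r)}$ is itself intersecting, lies outside the hypothesis $n \geq 2r$ and so need not be treated.
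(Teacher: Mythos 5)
Your proof is correct and takes essentially the route the paper intends: the statement is given there as a direct reformulation of Theorem \ref{ekrt}, with the boundary case $n=2r$ handled by exactly the complementary-pair count the paper sketches in its introduction (at most one of $A$ and $[n]\setminus A$, giving the bound $\tfrac{1}{2}\binom{n}{r}=\binom{n-1}{r-1}$), and your observation that strictness genuinely fails at $n=2r$ is the right explanation of why the two regimes are stated differently. The only pedantic caveat is that Theorem \ref{ekrt} as stated assumes $2\leq r$, so the strict case $r=1$, $n>2$ needs the (trivial) separate remark that any intersecting family of singletons consists of a single set and is therefore a star.
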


There are some results giving EKR-type theorems for different types
of graphs. The following theorem was originally proved by Berge
\cite{berge}, with Livingston \cite{living} characterizing the
extremal case.
\begin{theorem}{(Berge \cite{berge},Livingston \cite{living})}
\label{berth} If $r\geq 1$, $t\geq 2$ and $G$ is the disjoint union
of $r$ copies of $K_t$, then $G$ is $r$-EKR and strictly so unless
$t=2$.
\end{theorem}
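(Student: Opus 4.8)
The plan is to reformulate the statement as an extremal problem about vectors, prove the bound by a Katona-style cyclic partition, and then extract the uniqueness from a rigid analysis of the equality case.

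First I would set up the reformulation. Label the $r$ cliques $1,\dots,r$ and identify the $t$ vertices of the $i$-th copy of $K_t$ with $\mathbb{Z}_t$. Since a complete graph has no two non-adjacent vertices, an independent set meets each $K_t$ in at most one vertex, so an independent $r$-set takes exactly one vertex from each clique; hence $\mathcal{J}^{(r)}(G)$ is naturally identified with $\mathbb{Z}_t^{r}$. Under this identification two independent $r$-sets $f,g$ intersect precisely when $f_i=g_i$ for some $i$, and the star centred at the $j$-th vertex of clique $i$ becomes $\{f:f_i=j\}$, a set of size $t^{r-1}$. Thus the theorem reduces to: the largest family $\mathcal{F}\subseteq\mathbb{Z}_t^{r}$ in which any two vectors agree in some coordinate has size $t^{r-1}$, with equality only for these coordinate-stars when $t\ge 3$.

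For the bound (Berge's half) I would use a cyclic partition. Writing $\mathbf{1}=(1,\dots,1)$, the cosets of the cyclic subgroup $\langle\mathbf 1\rangle$ partition $\mathbb{Z}_t^{r}$ into $t^{r-1}$ lines $\{v,v+\mathbf 1,\dots,v+(t-1)\mathbf 1\}$, each of size $t$. Two distinct points $v+k\mathbf 1$ and $v+k'\mathbf 1$ of one line differ by $(k-k')\mathbf 1$, so they disagree in every coordinate and do not intersect. Consequently an intersecting $\mathcal F$ contains at most one point from each line, giving $|\mathcal F|\le t^{r-1}$, matching the largest star.

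For strictness (Livingston's half) I would analyse when $|\mathcal F|=t^{r-1}$. Refine the partition by splitting off the last coordinate: write each vector as $(x,j)$ with $x\in\mathbb Z_t^{r-1}$ and $j\in\mathbb Z_t$, and set $S_x=\{j:(x,j)\in\mathcal F\}$, so $|\mathcal F|=\sum_x|S_x|$. If $x,x'$ disagree in all of the first $r-1$ coordinates, then any $(x,j),(x',j')\in\mathcal F$ with $j\neq j'$ would disagree everywhere, contradicting intersection; hence $S_x$ and $S_{x'}$ cannot contain two different values, forcing both (if non-empty) to be one common singleton. Running this along each $\langle\mathbf 1\rangle$-line in $\mathbb Z_t^{r-1}$ re-proves the bound line-by-line and shows that at equality every such line is rigidly of one of two types: either all $t$ of its points carry the same singleton $S_x=\{j_L\}$, or a single point carries the full set $S_x=\mathbb Z_t$ and the rest are empty.

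In the \emph{all-singleton} regime, define $g(x)$ by $S_x=\{g(x)\}$; the constraint above gives $g(x)=g(x')$ whenever $x,x'$ disagree everywhere. The graph on $\mathbb Z_t^{r-1}$ joining vectors that disagree in every coordinate is connected of diameter at most $2$ when $t\ge 3$ (given $x,x'$, choose $y$ avoiding both in each coordinate), so $g$ is constant and $\mathcal F$ is the star fixing the last coordinate. This is exactly where $t\ge 3$ enters: for $t=2$ that graph is a perfect matching of antipodal pairs, $g$ need only be constant on each pair, and the resulting non-star extremal families account for the failure of strictness. In the remaining regime, where some point carries a full fibre $S_x=\mathbb Z_t$, I would show the support is a single coordinate-cylinder on which every $S_x$ is full, i.e. a star fixing one of the first $r-1$ coordinates, by an analogous disagreement analysis or by induction on $r$. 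Carrying out this last reduction cleanly — ruling out hybrids of the two regimes and pinning down the cylinder — is the main obstacle, and is precisely the content of Livingston's refinement of Berge's bound.
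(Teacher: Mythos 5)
The paper itself offers no proof of this theorem --- it is quoted as background, attributed to Berge and Livingston --- so your proposal can only be judged on its own terms. On those terms: your reformulation is correct (an independent $r$-set takes exactly one vertex per clique, so $\mathcal{J}^{(r)}(G)\cong \mathbb{Z}_t^{r}$ with intersection meaning agreement in a coordinate), and the cyclic partition into cosets of $\langle\mathbf{1}\rangle$ does give a complete, correct proof of the bound $|\mathcal{F}|\le t^{r-1}$, i.e.\ of Berge's half. Your equality dichotomy is also sound: at equality each line of $\mathbb{Z}_t^{r-1}$ is either all-singleton with a common value or carries one full fibre, the diameter-two connectivity of the everywhere-disagreement graph correctly forces the all-singleton regime to be a star for $t\ge 3$, and the $t=2$ perfect-matching picture correctly explains the failure of strictness there.

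The gap is exactly where you flagged it, and it is genuine: the statement includes strictness for $t\ge 3$, and your proposal does not prove it. Concretely, the suggested ``analogous disagreement analysis'' for the mixed regime fails whenever $t\le r-1$. If $S_{x^*}$ is a full fibre and $L=\{x+k\mathbf{1}:k\in\mathbb{Z}_t\}$ is a singleton-type line with value $c$, then (taking $j\neq c$) every one of the $t$ points of $L$ must agree with $x^*$ in one of the first $r-1$ coordinates; but for each coordinate $i$ there is exactly one $k$ with $(x+k\mathbf{1})_i=x^*_i$, so a line meets this ``cross'' around $x^*$ in at most $r-1$ points. This yields a contradiction only when $t\ge r$; for $t\le r-1$ local disagreements do not exclude hybrid families, and some genuinely global argument is required --- this is the substance of Livingston's theorem. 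It is worth noting that the \emph{pure} full-fibre regime does close cleanly by induction on $r$: equality forces exactly one full point per line of $\mathbb{Z}_t^{r-1}$, hence $t^{r-2}$ full points which (since any two full fibres must have their bases agreeing in a coordinate) form an extremal intersecting family in $\mathbb{Z}_t^{r-1}$, a star by induction, making $\mathcal{F}$ a star. So the single missing piece is ruling out mixtures of the two line types when $t\le r-1$; as written, the proposal establishes the EKR bound but not the uniqueness clause.
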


Other proofs of this result were given by Gronau \cite{gron} and
Moon \cite{moon}. Berge \cite{berge} proved a stronger result.

\begin{theorem}{(Berge \cite{berge})}\label{berth2}
If $G$ is the disjoint union of $r$ complete graphs each of
order at least $2$, then $G$ is $r$-EKR.
\end{theorem}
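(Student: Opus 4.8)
The plan is to identify $\mathcal{J}^{(r)}(G)$ with the transversals of the partition of $V(G)$ into its $r$ cliques. Write $G=K_{t_1}\sqcup\cdots\sqcup K_{t_r}$ and assume $t_1\le t_2\le\cdots\le t_r$. Since an independent set meets each clique in at most one vertex and $V(G)$ is partitioned into exactly $r$ cliques, every $A\in\mathcal{J}^{(r)}(G)$ contains exactly one vertex from each $K_{t_i}$; thus $|\mathcal{J}^{(r)}(G)|=\prod_{i=1}^r t_i=:N$, and two independent $r$-sets intersect precisely when they agree in at least one coordinate. The star at a vertex $v\in K_{t_j}$ fixes the $j$th coordinate and is free in the others, so it has size $N/t_j$; hence the largest star has size $N/t_1$, and the theorem is the assertion that every intersecting $\mathcal{A}\subseteq\mathcal{J}^{(r)}(G)$ satisfies $|\mathcal{A}|\le N/t_1=\prod_{i=2}^r t_i$.

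First I would argue by induction on $r$, peeling off the smallest clique $K_{t_1}$ with vertex set $\{w_1,\ldots,w_{t_1}\}$ (the base case $r=1$ is immediate, since distinct singletons are disjoint). For each $k$ let $\mathcal{B}_k=\{A\in\mathcal{A}:w_k\in A\}$ and let $\mathcal{B}_k'$ be its projection onto $G'=K_{t_2}\sqcup\cdots\sqcup K_{t_r}$ obtained by deleting the fixed first coordinate. Since every $A\in\mathcal{A}$ uses exactly one $w_k$, we have $|\mathcal{A}|=\sum_{k=1}^{t_1}|\mathcal{B}_k'|$. The crucial observation is that for $k\ne l$ any $A\in\mathcal{B}_k$ and $B\in\mathcal{B}_l$ disagree in the first coordinate, so the intersecting hypothesis forces them to agree elsewhere; hence $\mathcal{B}_k'$ and $\mathcal{B}_l'$ are cross-intersecting in $G'$. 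Thus the theorem reduces to the following cross-intersecting statement, which I would isolate as a lemma: if $G'=K_{m_1}\sqcup\cdots\sqcup K_{m_s}$ with each $m_i\ge 2$ and $t\le\min_i m_i$, and $\mathcal{C}_1,\ldots,\mathcal{C}_t$ are pairwise cross-intersecting families of transversals of $G'$, then $\sum_{k=1}^t|\mathcal{C}_k|\le\prod_{i=1}^s m_i$. Applying this with $t=t_1\le t_2=\min_{i\ge 2}t_i$ yields $|\mathcal{A}|\le\prod_{i=2}^r t_i$, as required.

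The main obstacle is the cross-intersecting lemma itself. The cleanest sanity checks are the case $s=1$ of singletons, where cross-intersecting forces all nonempty $\mathcal{C}_k$ to be a single common vertex unless only one of them is nonempty, giving the bound with room to spare; and the all-$K_2$ case, where a coordinatewise-complementation bijection shows $\mathcal{C}_1$ and the complement of $\mathcal{C}_2$ are disjoint subsets of the cube. I would attempt the general case by a second induction that again peels a clique of $G'$; the difficulty is that splitting each $\mathcal{C}_k$ according to the peeled coordinate produces a two-dimensional array of families in which only the \emph{off-diagonal} pairs (different family \emph{and} different peeled value) are guaranteed to be cross-intersecting, so the inductive hypothesis does not apply verbatim and one must control this rook's-complement pattern. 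This bookkeeping is where I expect the real work to lie, and where an appeal to known cross-intersecting machinery for products may be required.

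As an alternative that sidesteps the cross-intersecting lemma, I would keep in reserve a direct compression (shifting) argument: order the vertices in each clique and, for a coordinate $i$ and values $a,b$, define the down-shift that replaces the $i$th coordinate $a$ by $b$ in every $A\in\mathcal{A}$ whenever the result is not already present. One checks that such shifts preserve both $|\mathcal{A}|$ and the intersecting property, and that a fully shifted family is concentrated enough to be bounded by $N/t_1$ directly; here the analysis of the terminal configuration would be the delicate step. Note finally that the theorem asserts only the $r$-EKR bound and not strictness, which is consistent with the non-uniqueness already present in Theorem~\ref{berth} when $t=2$, so no characterization of the extremal families is needed.
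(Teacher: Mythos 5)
The paper itself gives no proof of this theorem --- it is quoted from Berge as background --- so your attempt can only be judged on its own merits. Your setup is correct: every independent $r$-set is a transversal of the $r$ cliques, $|\mathcal{J}^{(r)}(G)|=N=\prod_i t_i$, a star at $v\in K_{t_j}$ has size $N/t_j$, so the target bound is $N/t_1$; and peeling $K_{t_1}$ does yield $|\mathcal{A}|=\sum_k|\mathcal{B}_k'|$ with $\mathcal{B}_k',\mathcal{B}_l'$ cross-intersecting for $k\ne l$. But the proposal has a genuine gap exactly where you flag it: the cross-intersecting lemma is never proved, and the induction you sketch for it really does fail verbatim. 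Splitting each $\mathcal{C}_k$ by the peeled coordinate produces a $t\times m_s$ array of projected families in which only pairs differing in \emph{both} indices are cross-intersecting, and nothing at all constrains families in the same row (a single $\mathcal{C}_k$ need not even be intersecting), so the inductive hypothesis applies to no natural regrouping of the array. Your reserve compression argument is likewise incomplete, since the ``terminal configuration'' analysis --- which for exactly $r$ cliques is where all the content lives --- is left open. So as written you have a correct reduction of the theorem to an unproven statement, not a proof.

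The good news is that your lemma is true and can be closed with a short Katona-style averaging that needs no induction. Identify the transversals of $K_{m_1}\sqcup\cdots\sqcup K_{m_s}$ with $\mathbb{Z}_{m_1}\times\cdots\times\mathbb{Z}_{m_s}$ and set $\sigma_c(x)=x+c\mathbf{1}$ (addition mod $m_i$ in coordinate $i$). For $0\le c'<c\le t-1$ we have $0<c-c'<t\le m_i$ for every $i$, so $\sigma_0(x),\ldots,\sigma_{t-1}(x)$ are pairwise disjoint. Fix $x$ and consider the $t\times t$ incidence pattern of pairs $(k,c)$ with $\sigma_c(x)\in\mathcal{C}_k$: cross-intersection forbids two incidences in distinct rows \emph{and} distinct columns, and a short case check shows any such pattern lies entirely in one row or one column, hence has at most $t$ entries. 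Summing over all $M=\prod_i m_i$ choices of $x$ counts each $|\mathcal{C}_k|$ exactly $t$ times, so $t\sum_k|\mathcal{C}_k|\le tM$, which is your lemma. In fact, once the diagonal-shift idea is on the table the whole cross-intersecting scaffolding is unnecessary: applying the same count directly to an intersecting $\mathcal{A}\subseteq\mathcal{J}^{(r)}(G)$ with $t=t_1$ shows each $x$ admits at most one $c$ with $\sigma_c(x)\in\mathcal{A}$ (two would give disjoint members), whence $t_1|\mathcal{A}|\le N$ and $|\mathcal{A}|\le N/t_1$ in one stroke.
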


A generalization of Theorem \ref{berth} was first stated by Meyer
\cite{meyer} and proved by Deza and Frankl \cite{df}.
\begin{theorem}{(Meyer \cite{meyer},Deza and Frankl \cite{df})}
\label{dfth} If $r\geq 1$, $t\geq 2$ and $G$ is a disjoint union of
$n\geq r$ copies of $K_t$, then $G$ is $r$-EKR and strictly so
unless $t=2$ and $r=n$.
\end{theorem}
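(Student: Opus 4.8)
The plan is to induct on the number $n$ of complete-graph components, taking as the base case $n=r$, which is precisely the Berge--Livingston theorem (Theorem \ref{berth}). Throughout I would use the structural description of $\mathcal J^{(r)}(G)$: since each $K_t$ is a clique, an independent $r$-set selects exactly one vertex from each of $r$ distinct components, so a member of $\mathcal J^{(r)}(G)$ is a choice of an $r$-subset $R\subseteq[n]$ of components together with one vertex in each component of $R$. In particular every star has size $\binom{n-1}{r-1}t^{r-1}$, and this is the quantity to beat. For an intersecting family $\mathcal A$, writing $\pi(A)\subseteq[n]$ for the set of components met by $A$, the first observation is that $\pi(\mathcal A)=\{\pi(A):A\in\mathcal A\}$ is an intersecting family of $r$-subsets of $[n]$: if $\pi(A)\cap\pi(B)=\emptyset$ then $A$ and $B$ live in disjoint components and cannot meet.

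For the regime $n\ge 2r$ this already suffices. Grouping $\mathcal A$ by support as $\mathcal A=\bigsqcup_{R\in\pi(\mathcal A)}\mathcal A_R$ with $\mathcal A_R=\{A\in\mathcal A:\pi(A)=R\}$, each $\mathcal A_R$ is an intersecting family of independent $r$-sets in the subgraph induced on the components of $R$, which is a disjoint union of $r$ copies of $K_t$; Berge--Livingston (Theorem \ref{berth}) gives $|\mathcal A_R|\le t^{r-1}$. Meanwhile the classical Erd\H{o}s--Ko--Rado theorem (Theorem \ref{ekrt}) gives $|\pi(\mathcal A)|\le\binom{n-1}{r-1}$ once $n\ge 2r$. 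Multiplying yields $|\mathcal A|\le\binom{n-1}{r-1}t^{r-1}$, as required. The remaining and harder regime is $r\le n<2r$, where $\pi(\mathcal A)$ may be all of $[n]^{(r)}$ and this bound is too weak.

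For the hard regime I would run the induction by fixing a component, say the $n$-th with vertices $(n,1),\dots,(n,t)$, and splitting $\mathcal A$ into the subfamily $\mathcal B$ meeting none of these vertices and, for each colour $c$, the link $\mathcal A_c=\{A\setminus\{(n,c)\}:A\in\mathcal A,\ (n,c)\in A\}$, so that $|\mathcal A|=|\mathcal B|+\sum_{c=1}^t|\mathcal A_c|$. Here $\mathcal B$ is an intersecting family on the $n-1$ surviving components and the $\mathcal A_c$ are families of independent $(r-1)$-sets there; the intersecting property of $\mathcal A$ translates exactly into the \emph{cross}-intersecting relations that $\mathcal A_c$ and $\mathcal A_{c'}$ cross-intersect for $c\ne c'$ and that each $\mathcal A_c$ cross-intersects $\mathcal B$. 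By Pascal's identity the target splits as $\binom{n-1}{r-1}t^{r-1}=\binom{n-2}{r-1}t^{r-1}+t\cdot\binom{n-2}{r-2}t^{r-2}$, i.e.\ into the $r$-star and $(r-1)$-star sizes on $n-1$ components, which is exactly what the induction hypothesis would supply \emph{if} the pieces could be bounded separately.

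The main obstacle is precisely that they cannot: an individual link need not be intersecting, and indeed when $\mathcal A$ is a star at $(n,1)$ the link $\mathcal A_1$ equals the \emph{entire} family $\mathcal J^{(r-1)}$ of the smaller graph while $\mathcal B=\emptyset$, so the contributions trade off and must be bounded jointly. What makes this delicate is that, after reassembling $\mathcal A$ from $\mathcal B$ and the links, the joint inequality $|\mathcal B|+\sum_c|\mathcal A_c|\le\binom{n-1}{r-1}t^{r-1}$ is logically equivalent to the theorem for $n$ components, so a naive induction on $n$ is circular. I would break the circularity by strengthening the induction hypothesis to a cross-intersecting statement for disjoint unions of complete graphs---bounding a weighted sum of one intersecting family together with several pairwise cross-intersecting families of one lower uniformity---and proving it by a secondary induction; equivalently, by a compression argument, since the shift replacing $(i,b)$ by $(i,a)$ within a component preserves both membership in $\mathcal J^{(r)}(G)$ and the intersecting property, pushes $\mathcal A$ to a colour-compressed family of equal size, whose structure can then be analysed directly. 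Finally, strictness (failing only when $t=2$ and $r=n$) would follow by tracking equality through these reductions: equality in Berge--Livingston forces a star when $t\ge 3$, and equality in the Erd\H{o}s--Ko--Rado bound on $\pi(\mathcal A)$ requires $n>2r$, leaving $t=2,\ r=n$ as the sole non-strict case.
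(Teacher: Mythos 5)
First, note that the paper does not prove this statement at all: Theorem \ref{dfth} is quoted as background, attributed to Meyer \cite{meyer} and Deza--Frankl \cite{df}, so there is no in-paper proof to compare against; your proposal has to stand on its own. Its sound parts are genuinely sound: the base case $n=r$ is indeed Theorem \ref{berth}; the support map $\pi$ is well defined and $\pi(\mathcal A)$ is intersecting; and in the regime $n\geq 2r$ the product bound $|\mathcal A|=\sum_{R\in\pi(\mathcal A)}|\mathcal A_R|\leq \binom{n-1}{r-1}t^{r-1}$, combining Theorem \ref{berth} on each induced $r$-tuple of cliques with Theorem \ref{ekrt} on the supports, is correct and clean. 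The setup for $r\leq n<2r$ --- the split $|\mathcal A|=|\mathcal B|+\sum_{c}|\mathcal A_c|$, the observation that the links are only pairwise cross-intersecting (and cross-intersecting with $\mathcal B$) rather than intersecting, and the Pascal decomposition of the target --- is also accurate, as is your diagnosis that bounding the pieces separately is circular.

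The genuine gap is that you stop exactly where the theorem begins. For $r\leq n<2r$ you say you ``would break the circularity by strengthening the induction hypothesis to a cross-intersecting statement\dots and proving it by a secondary induction; equivalently, by a compression argument,'' but you never state the strengthened inequality (something of the shape $|\mathcal B|+\sum_{c=1}^{t}|\mathcal A_c|\leq \binom{n-2}{r-1}t^{r-1}+t\binom{n-2}{r-2}t^{r-2}$ under the cross-intersection hypotheses), never verify it is inductively self-propagating, and never carry out the compression analysis. That unexecuted step is the entire content of the Deza--Frankl theorem beyond Berge's case; identifying the obstacle is not the same as resolving it, and formulating a cross-intersecting strengthening that actually closes is known to be delicate (it is essentially what the compression machinery of \cite{hst} is built to do). A second, independent flaw is the strictness claim: you argue that equality in the EKR bound on $\pi(\mathcal A)$ ``requires $n>2r$,'' but the theorem asserts strictness for all $t\geq 3$ including $n=2r$, where Theorem \ref{ekrt} has many non-star extremal families (one set from each complementary pair); so your route cannot certify strictness at $n=2r$ without an additional argument showing that per-support stars with a non-star support family cannot be globally intersecting at full size. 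Even when $\pi(\mathcal A)$ is a star, gluing the individual stars $\mathcal A_R$ into one global star requires a consistency argument forcing all the centers $x_R$ to coincide (e.g., via supports meeting in a single component), which you also omit.
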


In the paper which introduced the notion of the $r$-EKR property for
graphs, Holroyd, Spencer and Talbot \cite{hst} prove a generalization of Theorems
\ref{berth2} and \ref{dfth}.

\begin{theorem}{(Holroyd et al. \cite{hst})} If $G$ is a disjoint union of
$n\geq r$ complete graphs
each of order at least $2$, then $G$ is $r$-EKR.
\end{theorem}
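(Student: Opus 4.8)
The plan is to first pin down the largest star and then reduce the statement to a cross-intersecting inequality by induction on the number $n$ of cliques. Write $G=C_1\cup\cdots\cup C_n$ with $|C_i|=t_i\ge 2$, and order the cliques so that $t_1\ge\cdots\ge t_n$. Since an independent $r$-set meets each clique at most once, an independent set containing a fixed vertex $v\in C_i$ is determined by choosing $r-1$ further vertices, one in each of $r-1$ distinct cliques other than $C_i$; hence $|\mathcal{J}_v^{(r)}(G)|=e_{r-1}(t_1,\ldots,\widehat{t_i},\ldots,t_n)$, the elementary symmetric polynomial in the clique sizes with $t_i$ omitted. As $e_{r-1}$ is monotone in each variable, this is maximised by omitting the smallest size, so the largest star is obtained at a vertex of the smallest clique $C_n$ and has size $M:=e_{r-1}(t_1,\ldots,t_{n-1})=|\mathcal{J}^{(r-1)}(G')|$, where $G'=G-C_n$. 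The goal is therefore to show $|\mathcal{A}|\le M$ for every intersecting $\mathcal{A}\subseteq\mathcal{J}^{(r)}(G)$.

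I would induct on $n$. When $n=r$ the statement is precisely Berge's Theorem \ref{berth2}, which serves as the base case. For $n>r$ (so that $G'$ still has $n-1\ge r$ cliques, each of order at least $2$), I split $\mathcal{A}$ according to how it meets the smallest clique $C_n=\{u_1,\ldots,u_{t_n}\}$. Let $\mathcal{A}_0=\{A\in\mathcal{A}:A\cap C_n=\emptyset\}\subseteq\mathcal{J}^{(r)}(G')$ and, for each vertex $u_j$, let $\mathcal{B}^{(j)}=\{A\setminus\{u_j\}:A\in\mathcal{A},\ A\cap C_n=\{u_j\}\}\subseteq\mathcal{J}^{(r-1)}(G')$, so that $|\mathcal{A}|=|\mathcal{A}_0|+\sum_{j=1}^{t_n}|\mathcal{B}^{(j)}|$. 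Because $\mathcal{A}$ is intersecting and two distinct vertices of $C_n$ never coincide, tracing the intersections through $C_n$ shows that $\mathcal{A}_0$ is intersecting and that the families $\mathcal{A}_0,\mathcal{B}^{(1)},\ldots,\mathcal{B}^{(t_n)}$ are pairwise cross-intersecting inside $G'$. Notice that the extremal star itself corresponds to concentrating everything in a single $\mathcal{B}^{(j)}=\mathcal{J}^{(r-1)}(G')$, with all the other families (and $\mathcal{A}_0$) empty.

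The crux — and the step I expect to be the main obstacle — is the resulting cross-intersecting inequality on $G'$: if $\mathcal{A}_0\subseteq\mathcal{J}^{(r)}(G')$ is intersecting and $\mathcal{B}^{(1)},\ldots,\mathcal{B}^{(m)}\subseteq\mathcal{J}^{(r-1)}(G')$ are such that all of $\mathcal{A}_0,\mathcal{B}^{(1)},\ldots,\mathcal{B}^{(m)}$ are pairwise cross-intersecting, then $|\mathcal{A}_0|+\sum_j|\mathcal{B}^{(j)}|\le|\mathcal{J}^{(r-1)}(G')|$. I would attack this by a shifting (compression) argument that pushes every vertex of every set toward a fixed representative in its clique, applied simultaneously to all the families so as to preserve the cross-intersecting conditions and the cardinalities; once the families are fully compressed their structure should be rigid enough to either conclude directly or to feed back into the induction on $n$, strengthening the inductive hypothesis to this cross-intersecting form so that the induction becomes self-supporting. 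Verifying that simultaneous compression preserves cross-intersection, and that a fully compressed system cannot beat the single-full-family configuration, is where the real work lies; this is also the delicate regime, since for $n$ only slightly larger than $r$ the usual ``one family full, the rest empty'' heuristic for cross-intersecting sums is no longer automatic, which is exactly why the Berge base case is needed to anchor the induction. The accompanying bookkeeping then reduces to identities among the elementary symmetric polynomials, comparing $M=e_{r-1}(t_1,\ldots,t_{n-1})$ with its expansion on removing a second clique.
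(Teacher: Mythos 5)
Your reduction is set up correctly: the count $|\mathcal{J}_v^{(r)}(G)|=e_{r-1}(t_1,\ldots,\widehat{t_i},\ldots,t_n)$, the identification of the largest star at a vertex of the smallest clique, the decomposition $|\mathcal{A}|=|\mathcal{A}_0|+\sum_j|\mathcal{B}^{(j)}|$, and the observation that $\mathcal{A}_0,\mathcal{B}^{(1)},\ldots,\mathcal{B}^{(t_n)}$ are pairwise cross-intersecting in $G'=G-C_n$ are all accurate. But at that point the entire content of the theorem has been repackaged into the cross-intersecting inequality $|\mathcal{A}_0|+\sum_j|\mathcal{B}^{(j)}|\leq|\mathcal{J}^{(r-1)}(G')|$, and you do not prove it: ``I would attack this by a shifting argument \ldots\ is where the real work lies'' is a plan, not an argument. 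Note that this inequality is genuinely delicate, because it is tight in configurations other than the one you name: for example with $t_1=\cdots=t_n=t$ and $r=2$, taking every $\mathcal{B}^{(j)}$ to be the $(r-1)$-star at a fixed vertex $x\in G'$ and $\mathcal{A}_0$ the $r$-star at $x$ gives $t+t_n=2t=|\mathcal{J}^{(1)}(G')|$, i.e.\ equality with \emph{all} families nonempty. So no soft ``concentrate everything in one family'' heuristic can close it; verifying that simultaneous compression preserves the cross-intersecting system and classifying the compressed extremal configurations is precisely the hard part, and it is missing.

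There is also a structural problem with the induction as you have framed it. If, as you suggest, the inductive hypothesis is strengthened to the cross-intersecting statement so that the induction becomes ``self-supporting,'' then Berge's theorem (Theorem \ref{berth2}) no longer anchors it: at $n-1=r$ cliques you need the cross-intersecting inequality on a union of exactly $r$ complete graphs, which is not what Berge provides (he bounds a single intersecting family). So the base case of the strengthened statement is itself unproven, and the induction does not close. For comparison: this theorem is stated in the present paper without proof, as a quoted result of Holroyd, Spencer and Talbot \cite{hst}; their actual argument does not delete the smallest clique but instead compresses \emph{within} a clique --- identifying two vertices, i.e.\ contracting an edge, so that clique orders shrink and one can induct on the number of vertices --- which is the same device as the clique-contraction machinery of Lemma \ref{tl1} in this paper, and it avoids ever having to prove a free-standing cross-intersecting lemma of the kind your route requires.
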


The compression technique used in \cite{hst}, which is equivalent to
contracting an edge in a graph, was employed by Talbot\cite{tal} to
prove a theorem for the $k^{th}$ power of a cycle.

\begin{definition}
The $k^{th}$ power of a cycle $C_n^k$ is a graph with vertex set
$[n]$ and edges between $a,b\in [n]$ iff $1\leq |a-b \textrm{ mod }
n|\leq k$.
\end{definition}
\begin{theorem}{(Talbot \cite{tal})}
If $r,k,n\geq 1$, then $C_n^k$ is $r$-EKR and strictly so unless
$n=2r+2$ and $k=1$.
\end{theorem}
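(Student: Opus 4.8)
The plan is to work with the standard reformulation of $C_n^k$: identify $V(C_n^k)$ with $\mathbb{Z}_n$, so that the independent $r$-sets are exactly the \emph{$k$-separated} $r$-sets, i.e. the $r$-subsets of $\mathbb{Z}_n$ in which any two cyclically consecutive elements differ by at least $k+1$. Since $C_n^k$ is vertex-transitive, every star has the same size, and a short count shows that the star $\mathcal{J}_v^{(r)}(C_n^k)$ has size $\binom{n-rk-1}{r-1}$ (delete $v$ together with its $2k$ neighbours and count $k$-separated $(r-1)$-sets in the remaining path-power on $n-2k-1$ vertices). First I would dispose of the degenerate ranges: there are no independent $r$-sets unless $n\geq r(k+1)$, so the statement is vacuous below this threshold, and for $r=1$ or for $n$ just above $r(k+1)$ the claim can be checked directly. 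I would also isolate the exceptional case $n=2r+2,\ k=1$, where $C_n^k=C_{2r+2}$ and a complementation-type symmetry produces a non-star intersecting family of maximum size, accounting for the loss of strictness there.

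The main tool would be a compression (shift) operator acting on families of separated sets. For a fixed reference vertex and a pair of positions, define the shift that replaces the larger element of a set by a smaller still-available one whenever the resulting set is again $k$-separated and is not already in the family. I would verify the three properties a useful compression must enjoy: it preserves the cardinality of $\mathcal{A}$, it preserves the intersecting property, and---crucially here---it keeps every set $k$-separated. Granting these, one may assume $\mathcal{A}$ is fully compressed with respect to the chosen reference, which forces a great deal of structure on the surviving sets (for instance, that each of them meets a fixed short initial arc).

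With $\mathcal{A}$ compressed, I would split it according to a reference vertex $v$ into the part lying in the star $\mathcal{J}_v^{(r)}$ and the remainder $\mathcal{B}=\mathcal{A}\setminus\mathcal{J}_v^{(r)}$, and aim to charge every set of $\mathcal{B}$ injectively against a set of the star. Here I would pass from the cyclic to the linear setting---removing $v$ turns the cycle into an interval and $C_n^k$ into a path-power---and run an induction on $n$, combining the EKR property for separated sets on a path with Katona-style cyclic averaging to convert the cyclic counts into linear ones. The small-$n$ configurations set aside in the first step serve as the base cases of this induction.

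The hard part will be twofold. First, showing that the shift operator genuinely preserves $k$-separation: ordinary EKR shifting ignores the underlying graph, and moving an element can create two vertices at cyclic distance at most $k$, so the operator must be defined relative to the cyclic order and its legality argued case by case. Second, making the extremal analysis in the cycle-to-path reduction tight enough to recover the exact bound $\binom{n-rk-1}{r-1}$ and to obtain strictness in every case except $n=2r+2,\ k=1$; this is precisely where one must show that the averaging is lossless away from the exceptional configuration.
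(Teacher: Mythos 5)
First, a point of comparison: the paper you were given does not prove this statement at all --- it is quoted as background from Talbot \cite{tal}, so the only meaningful benchmark is Talbot's original argument. Measured against that, your outline has the right skeleton and several correct details: the star size $\binom{n-kr-1}{r-1}$ (your count via deleting $v$ and its $2k$ neighbours and counting $k$-separated $(r-1)$-sets in $P_{n-2k-1}^k$ is exactly right), the vacuity threshold $n\geq r(k+1)$, the need for special base cases just above it, and the identification of $n=2r+2$, $k=1$ as the strictness exception (e.g.\ in $C_6$ with $r=2$ the non-star family $\{1,3\},\{3,5\},\{1,5\}$ already matches the star size $\binom{r+1}{2}=3$). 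The plan of compressing while preserving $k$-separation and then passing from the cycle to a path-power by deleting a reference vertex is also the genuine shape of Talbot's proof.

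The gap is that the two steps you yourself defer as ``the hard part'' are the entire content of the theorem, and your proposed tool for the second step points the wrong way. Katona-style cyclic averaging cannot simply be ``made lossless'' here: since the total number of $k$-separated $r$-sets in $\mathbb{Z}_n$ is $\frac{n}{n-kr}\binom{n-kr}{r}$ and $\frac{r}{n}\cdot\frac{n}{n-kr}\binom{n-kr}{r}=\binom{n-kr-1}{r-1}$, an averaging proof would have to show that $\mathcal{A}$ occupies at most an $r/n$ fraction uniformly across the relevant classes, which is essentially the theorem itself restated --- and any such uniform statement must fail to be strict precisely at $n=2r+2$, $k=1$, so the averaging bookkeeping would need the same delicate case analysis you are trying to avoid. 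Talbot's actual route after compression is not averaging but a direct double induction (on $n$ and $r$): split the compressed family according to membership of the reference vertex and its $k$ cyclic successors, and inject the pieces into intersecting families of separated sets on shorter cycles and intervals, with the configurations near $n=(k+1)r$ and the $k=1$ case treated separately. You have also asserted, rather than proved, that a separation-preserving shift exists with the claimed structural consequence (every compressed set meets a short initial arc); defining such shifts so that they simultaneously preserve cardinality, the intersecting property, and $k$-separation on the \emph{cycle} is a nontrivial case analysis. As it stands your submission is a credible program reconstructing the known proof's architecture, but the load-bearing lemmas --- legality of the compression and the tight charging injection with its strictness analysis --- are named, not established.
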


An analogous theorem for the $k^{th}$ power of a path is also proved
in \cite{hst}.
\begin{definition}
The $k^{th}$ power of a path $P_n^k$ is a graph with vertex set
$[n]$ and edges between $a,b\in [n]$ iff $1\leq |a-b|\leq k$.
\end{definition}
\begin{theorem}{(Holroyd et al. \cite{hst})}
If $r,k,n\geq 1$, then $P_n^k$ is $r$-EKR.
\end{theorem}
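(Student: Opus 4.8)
The plan is to fix $k$ and induct on $n$, showing that every intersecting $\mathcal A\subseteq\mathcal J^{(r)}(P_n^k)$ satisfies $|\mathcal A|\le|\mathcal J_1^{(r)}(P_n^k)|$, the star at the endpoint vertex $1$. First I would dispose of the trivial cases: when $r=1$ any intersecting family of independent singletons has size $1$, which equals the star size; and when $n$ is so small that $\mathcal J^{(r)}(P_n^k)$ is empty or itself intersecting, the $r$-EKR property is immediate. For the main range, note that an independent $r$-set is exactly a set $\{a_1<\cdots<a_r\}$ with $a_{i+1}-a_i\ge k+1$, so the map $a_i\mapsto a_i-(i-1)k$ gives $|\mathcal J^{(r)}(P_n^k)|={n-(r-1)k\choose r}$, while the star at $1$ consists of the sets containing $1$ and avoiding $\{2,\dots,k+1\}$, which is a copy of $\mathcal J^{(r-1)}$ of $P^k_{n-k-1}$ on $\{k+2,\dots,n\}$, of size ${n-(r-1)k-1\choose r-1}$. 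I would then confirm that this endpoint star is in fact the largest, since deleting the closed neighborhood of an interior vertex removes up to $2k+1$ vertices against only $k+1$ for an endpoint, so the endpoint star leaves the most room for the remaining $r-1$ vertices.

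The engine of the induction is a left-compression. For $i<j$ define $C_{ij}$ on a family by replacing each $A$ with $j\in A$, $i\notin A$, and $(A\setminus\{j\})\cup\{i\}$ still independent in $P_n^k$, by the set $(A\setminus\{j\})\cup\{i\}$, unless that set already lies in the family. Each $C_{ij}$ preserves $|\mathcal A|$ and keeps $\mathcal A\subseteq\mathcal J^{(r)}(P_n^k)$, and I would argue it also preserves the intersecting property; iterating until no $C_{ij}$ acts produces a \emph{compressed} family, pushed maximally toward vertex $1$. On such a family the inductive step is clean: partition $\mathcal A=\mathcal A_1\sqcup\mathcal A_0$ according to whether $1\in A$. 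Then $\mathcal A_1$ lies inside the star at $1$, and I would use the compressed (shifted) structure to build an injection $\mathcal A_0\hookrightarrow\mathcal J_1^{(r)}(P_n^k)\setminus\mathcal A_1$, sending $A\in\mathcal A_0$ to $(A\setminus\{\min A\})\cup\{1\}$ and checking, via the intersecting and shiftedness properties, that the image is a genuine independent set lying in the star but outside $\mathcal A_1$. Summing gives $|\mathcal A|=|\mathcal A_1|+|\mathcal A_0|\le|\mathcal J_1^{(r)}(P_n^k)|$, as required. The sets in $\mathcal A_0$ live in $P^k_{n-1}$ on $\{2,\dots,n\}$ and remain intersecting, so the inductive hypothesis applies to the subgraph; the compression is what prevents the naive bound $|\mathcal A_1|+|\mathcal A_0|$ from overshooting the target.

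The step I expect to be the main obstacle is verifying that the \emph{blocked} compression $C_{ij}$ preserves the intersecting property. Unlike the classical setting, the shift is sometimes forbidden because $(A\setminus\{j\})\cup\{i\}$ would fail to be independent, and this can break intersection: if $A$ is shifted to $A'=(A\setminus\{j\})\cup\{i\}$ while some $B$ meeting $A$ only in $j$ cannot be shifted, then one can arrange $A'\cap B=\emptyset$. Indeed, if $B$ resists the shift because $i$ is adjacent to some $b\in B\setminus\{j\}$, then $j\in B$ together with $|b-i|\le k$ forces (in $P_n^k$) the value $b=i-k$, and since $A$ was shiftable we have $i-k\notin A$, so $A'$ and $B$ really are disjoint. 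Handling this honestly is where the work lies: I would either restrict to compressions $C_{ij}$ compatible with the interval structure (for instance only those respecting the gap pattern so that a resisting $B$ can always be reshifted by an auxiliary compression), treat these exceptional pairs separately, or strengthen the inductive statement so that such configurations are absorbed. The counting and the injection are routine once the compression lemma is secured; the delicate adjacency-driven case analysis sustaining that lemma is the crux.
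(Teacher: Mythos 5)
This statement is quoted from \cite{hst}; the paper gives no proof of it, but the correct engine is the one the paper itself generalizes in Section \ref{s3}, and measured against that your proposal has a genuine gap exactly where you suspect it. The preservation of the intersecting property under arbitrary blocked compressions $C_{ij}$ is not merely ``the main obstacle'': your own adjacency analysis is essentially a counterexample to the lemma as stated, and none of your three proposed remedies (restricting the pairs, handling exceptions, strengthening the induction) is actually carried out, so the proof does not close. The known repair is specific and structural: compress only along pairs $u,v$ with $N[u]\subseteq N[v]$. In $P_n^k$ the last vertex satisfies $N[n]\subseteq N[n-1]$ (and symmetrically $N[1]\subseteq N[i]$ for every neighbor $i$ of $1$), so the shift $n\mapsto n-1$ is \emph{never} blocked: if $A$ is independent and $n\in A$, then $(A\setminus\{n\})\cup\{n-1\}$ is automatically independent. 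With that choice one proves, exactly as in Lemma \ref{l3}, that the compressed family splits into an intersecting $\bar{\mathcal{A}'}\subseteq\mathcal{J}^{(r)}(P_{n-1}^k)$ and an intersecting $\mathcal{B}'\subseteq\mathcal{J}^{(r-1)}(P_{n-k-1}^k)$, applies induction on $n$ to bound each piece by an endpoint star of the smaller graph, and reassembles via the star identity $|\mathcal{J}^{(r)}_1(G)|\geq|\mathcal{J}^{(r)}_1(G-v_n)|+|\mathcal{J}^{(r-1)}_1(G\downarrow v_n)|$ (the analogue of Lemma \ref{l4}; vertex $1$ survives both deletions once $n>k+1$, and smaller $n$ are trivial or vacuous). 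This deletion--contraction induction, not a global shift to a fully compressed family, is what makes the argument go through, and it is the same mechanism as in the proof of Theorem \ref{mainthm}, where the simplicial vertex $v_1$ with $N[v_1]\subseteq N[v_i]$ plays the role of the unblockable compression target.

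There is a second, independent failure in your counting step: the map $A\mapsto(A\setminus\{\min A\})\cup\{1\}$ on $\mathcal{A}_0$ is not injective. For $a\geq k+4$ the sets $\{2,a,\dots\}$ and $\{3,a,\dots\}$ are both independent, intersect each other (so they can coexist in an intersecting $\mathcal{A}_0$, and both survive full compression, since the shift of the second onto the first is suppressed precisely because the first already lies in the family), yet they have the same image $\{1,a,\dots\}$. The claim that the image avoids $\mathcal{A}_1$ is likewise asserted via ``shiftedness'' without argument. So even granting a compression lemma, the inductive step as written overcounts; the sketch needs to be replaced by the two-piece decomposition above rather than patched.
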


It can be observed here that the condition $r\leq n/2$ is not
required for the graphs $C_n^k$ and $P_n^k$ because for each of the two graphs,
there is no independent set of size greater than $n/2$, so the $r$-EKR property holds vacuously if $r>n/2$.

The compression proof technique is also employed to prove a result
for a larger class of graphs.

\begin{theorem}{(Holroyd et al. \cite{hst})}\label{mix}
If $G$ is a disjoint union of $n\geq 2r$ complete graphs,
cycles and paths, including an isolated singleton, then $G$ is
$r$-EKR.
\end{theorem}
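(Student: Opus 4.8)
The plan is to prove the stronger statement that \emph{every} intersecting subfamily $\mathcal{A}\subseteq\mathcal{J}^{(r)}(G)$ satisfies $|\mathcal{A}|\le|\mathcal{J}_{v_0}^{(r)}(G)|$, where $v_0$ is the isolated singleton, together with the observation that the star at $v_0$ is a largest star. The latter is immediate: for any vertex $u$ one has $|\mathcal{J}_u^{(r)}(G)|=|\mathcal{J}^{(r-1)}(G-N[u])|$, and the map sending an independent $(r-1)$-set $S$ avoiding $N[u]$ to $S$ itself when $v_0\notin S$ and to $(S\setminus\{v_0\})\cup\{u\}$ when $v_0\in S$ injects $\mathcal{J}^{(r-1)}(G-N[u])$ into $\mathcal{J}^{(r-1)}(G-v_0)$, which we identify with $\mathcal{J}_{v_0}^{(r)}(G)$ via $S\mapsto S\cup\{v_0\}$. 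Since $v_0$ is isolated, $N[v_0]=\{v_0\}$ is of minimal size, so $v_0$ hosts a largest star, and it suffices to bound arbitrary intersecting families by $|\mathcal{J}^{(r-1)}(G-v_0)|$.

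The engine is the shifting (compression) operator $\gamma_{uv}$, which replaces each $A\in\mathcal{A}$ having $v\in A$, $u\notin A$ by $(A\setminus\{v\})\cup\{u\}$ whenever that set is independent and not already present. I would first verify, exactly as in the shifting proof of Theorem~\ref{ekrt}, that $\gamma_{uv}$ preserves the size of $\mathcal{A}$ and sends intersecting families to intersecting families, and that it maps $\mathcal{J}^{(r)}(G)$ into itself as long as $N(u)\setminus\{v\}\subseteq N(v)$. This admissibility condition holds automatically for $u=v_0$ (since $N(v_0)=\emptyset$), for any edge of a complete-graph component (since $N(u)\setminus\{v\}=V\setminus\{u,v\}\subseteq N(v)$), and for the end edge of a path component; thus we may assume $\mathcal{A}$ is stable under every such compression.

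With $\mathcal{A}$ compressed, I would split it according to $v_0$, writing $\mathcal{A}=\mathcal{A}_0\sqcup\mathcal{A}_1$ with $\mathcal{A}_0=\{A:v_0\in A\}$ and $\mathcal{A}_1=\{A:v_0\notin A\}$, and set $\mathcal{B}=\{A\setminus\{v_0\}:A\in\mathcal{A}_0\}\subseteq\mathcal{J}^{(r-1)}(H)$ where $H=G-v_0$. Since $v_0\notin B$ for every $B\in\mathcal{A}_1$, the intersecting hypothesis forces $\mathcal{B}$ and $\mathcal{A}_1$ to be cross-intersecting, and the target bound becomes the clean inequality $|\mathcal{B}|+|\mathcal{A}_1|\le|\mathcal{J}^{(r-1)}(H)|$. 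I would prove this cross-intersecting inequality by induction on the number of vertices of $H$: each step contracts one edge of a non-trivial component, turning $K_m,P_m,C_m$ into $K_{m-1},P_{m-1},C_{m-1}$ (with $C_3\to K_2$ and $K_2,P_2\to$ a singleton), so $H$ remains a disjoint union of complete graphs, cycles and paths while the count $n\ge 2r$ is preserved; the base case in which every component is a singleton is the classical theorem together with its standard cross-intersecting companion. At each step the compression above plays the role of the contraction, letting one project the two cross-intersecting families onto the smaller graph.

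Two points will be the main obstacles. First, cycles do not admit the simple compression: for an edge $uv$ of $C_m$ neither endpoint satisfies $N(u)\setminus\{v\}\subseteq N(v)$, because contracting the edge creates two new adjacencies. These components must therefore be handled by a dedicated argument that contracts a cycle edge while tracking the two extra constraints it imposes, much as in the proofs behind the path-power and cycle-power theorems. Second, and more delicate, is the cross-intersecting inequality itself: isolating exactly how the hypothesis $n\ge 2r$ prevents a large family $\mathcal{A}_1$ from coexisting with a large cross-intersecting $\mathcal{B}$ is the technical heart of the argument, and it is precisely this step that forces the factor-of-two lower bound on the number of components.
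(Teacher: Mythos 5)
Your outline assembles correct standard ingredients: the injection showing the isolated vertex $v_0$ carries a largest star is exactly Lemma \ref{l1} of this paper, and your admissibility condition $N(u)\setminus\{v\}\subseteq N(v)$ (equivalently $N[u]\subseteq N[v]$) is the right one for the compression to preserve size, independence, and the intersecting property. But the two items you flag as ``obstacles'' are not loose ends to be tidied later --- they are the entire content of the theorem, and the proposal contains no idea for either. First, cycles: as you correctly observe, no edge of $C_m$ with $m\geq 4$ satisfies the admissibility condition, so the whole compression engine is unavailable on those components; saying these ``must be handled by a dedicated argument that contracts a cycle edge while tracking the two extra constraints'' restates the problem rather than solving it. If you instead split $\mathcal{A}$ at a cycle vertex $v$ without compressing, the derived family $\{A\setminus\{v\}:v\in A\in\mathcal{A}\}\subseteq\mathcal{J}^{(r-1)}(G\downarrow v)$ need not be intersecting (two sets may meet only in $v$), which is precisely the failure the compression exists to repair --- handling this is where Holroyd, Spencer and Talbot do their real work in \cite{hst}. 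Second, your reduction to the cross-intersecting inequality $|\mathcal{B}|+|\mathcal{A}_1|\leq|\mathcal{J}^{(r-1)}(G-v_0)|$ is essentially a reformulation of the theorem itself, since every intersecting family splits this way at $v_0$; deferring it to an induction whose base case is a ``standard cross-intersecting companion'' of Theorem \ref{ekrt} is therefore close to circular: that companion would have to be proved for disjoint unions of complete graphs, paths \emph{and cycles}, where the cycle obstruction reappears unchanged, and you never exhibit where the hypothesis $n\geq 2r$ enters --- the step you yourself call ``the technical heart.''

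For comparison: the statement you are proving is quoted from \cite{hst}, and the closest argument in this paper is the proof of Theorem \ref{mainthm} in Section \ref{s3}, which runs the same machinery on chordal components. There, after compressing along a pair with $N[v_1]\subseteq N[v_i]$ (available because $v_1$ is simplicial), one splits on $v_i$ and obtains two families that are \emph{both} intersecting (Lemma \ref{l3}), bounds each by the star at the singleton in the smaller graphs $G-v_i$ and $G\downarrow v_i$ by induction on $|V(G)|$, and reassembles via $|\mathcal{J}^r_x(G)|=|\mathcal{J}^r_x(G-v_i)|+|\mathcal{J}^{(r-1)}_x(G\downarrow v_i)|$ (Lemma \ref{l4}); no cross-intersecting lemma is needed at all. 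Since complete graphs and paths are chordal and $\mu(G)\geq n\geq 2r$, that route already covers the cycle-free case of the quoted theorem, and your cross-intersecting detour is architecturally closer to the Borg--Holroyd singleton technique of \cite{bh} than to \cite{hst}. As it stands, then, your proposal proves at most the special case with no cycle components, and even that only once the unproved cross-intersecting inequality is replaced by, or honestly reduced to, the two-intersecting-families induction just described.
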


The problem of finding if a graph $G$ is $2$-EKR is addressed by Holroyd and Talbot in
\cite{ht}.

\begin{theorem}{(Holroyd and Talbot \cite{ht})} \label{2ekr}
Let $G$ be a non-complete graph of order $n$ with minimum degree
$\delta$ and independence number $\alpha$.
\begin{enumerate}
\item If $\alpha=2$, then $G$ is strictly $2$-EKR.
\item If $\alpha\geq 3$, then $G$ is $2$-EKR if and only if
$\delta\leq n-4$ and strictly so if and only if $\delta\leq n-5$,
the star centers being the vertices of minimum degree.
\end{enumerate}
\end{theorem}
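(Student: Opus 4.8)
The plan is to pass to the complement graph and exploit the elementary structure of intersecting families of $2$-element sets. Write $\overline{G}$ for the complement of $G$ on the same vertex set. A pair $\{u,w\}$ is independent in $G$ precisely when $uw\in E(\overline{G})$, so $\mathcal{J}^{(2)}(G)=E(\overline{G})$, and two independent $2$-sets intersect exactly when the corresponding edges of $\overline{G}$ share an endpoint. Under this dictionary the star $\mathcal{J}_v^{(2)}(G)$ is the set of edges of $\overline{G}$ incident with $v$, so its size is $\deg_{\overline{G}}(v)=n-1-\deg_G(v)$. In particular the largest star has size $n-1-\delta$ and is centred at a vertex of minimum degree in $G$, which already matches the description of the star centres in the statement.

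Next I would establish the key structural fact: a family of pairwise-intersecting $2$-sets is either contained in a star (all members sharing a common vertex) or consists of at most three sets lying on three points, i.e.\ a subfamily of a triangle. This is a short Helly-type argument: if two members $\{a,b\}$ and $\{a,c\}$ meet only in $a$ and some third member avoids $a$, that member must meet both, forcing it to be $\{b,c\}$; a fourth pairwise-intersecting $2$-set is then seen to repeat one of $\{a,b\},\{a,c\},\{b,c\}$. Consequently every maximal intersecting subfamily of $\mathcal{J}^{(2)}(G)$ is either a full star, of size $\deg_{\overline{G}}(v)$ for some $v$, or a full triangle, of size exactly $3$; and a triangle exists if and only if $\overline{G}$ contains a triangle, that is, if and only if $G$ has an independent set of size $3$, i.e.\ $\alpha\geq 3$.

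With this in hand the theorem reduces to comparing the two achievable sizes $n-1-\delta$ and $3$. When $\alpha=2$ no triangle occurs, so every maximal---hence every maximum---intersecting family is a star; this gives statement (1), strict $2$-EKR. When $\alpha\geq 3$ both types are available, and since any maximum intersecting family is maximal, the maximum size over all intersecting families is $\max\{\,n-1-\delta,\,3\,\}$. Thus $G$ is $2$-EKR exactly when $n-1-\delta\geq 3$, i.e.\ $\delta\leq n-4$; and it is strictly $2$-EKR exactly when the star size strictly exceeds $3$, i.e.\ $n-1-\delta>3$, equivalently $\delta\leq n-5$, since at $\delta=n-4$ a triangle is a non-star family attaining the maximum. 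This yields statement (2), with the extremal stars again centred at the minimum-degree vertices.

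The only genuinely delicate point is the proof of the structure lemma and its careful use at the boundary: one must verify that a maximal intersecting family really is a full star or a full triangle (not merely contained in one), so that the comparison of the two sizes is exhaustive, and one must treat the equality case $\delta=n-4$ precisely in order to separate ``$2$-EKR'' from ``strictly $2$-EKR.'' Everything else is bookkeeping with the identity $\deg_{\overline{G}}(v)=n-1-\deg_G(v)$.
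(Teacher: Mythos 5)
Your proof is correct and complete: the dictionary $\mathcal{J}^{(2)}(G)=E(\overline{G})$, the classical fact that a pairwise-intersecting family of $2$-sets is a star or (a subfamily of) a triangle, and the comparison of $n-1-\delta$ with $3$ together handle both parts, including the boundary cases $\delta=n-4$ and $\delta=n-5$. Note, however, that this theorem is quoted in the paper from Holroyd and Talbot \cite{ht} without proof, so there is no in-paper argument to compare against; your route is essentially the standard one used in the original source, and no gap is present.
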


Holroyd and Talbot also present an interesting conjecture in \cite{ht}.
\begin{definition}
The minimum size of a maximal independent vertex set of a graph $G$
is the minimax independent number, denoted by $\mu(G)$.
\end{definition}
It can be noted here that $\mu(G)=i(G)$, where $i(G)$ is the independent domination number.
\begin{conj}\label{minmax}
Let $G$ be any graph and let $1\leq r\leq \frac{1}{2}\mu$; then $G$
is $r$-EKR(and is strictly so if $2<r<\frac{1}{2}\mu$).
\end{conj}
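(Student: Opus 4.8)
The statement is a uniform assertion over all graphs, so my plan is to reduce an arbitrary $G$ to a class in which the EKR property is already established, namely a disjoint union of complete graphs, and then transport the bound back. The first thing to check is that the hypothesis $r\le \frac{1}{2}\mu$ lands exactly on the base case. If $G$ is a disjoint union of $m$ complete graphs, then every maximal independent set picks precisely one vertex from each component, so $\mu(G)=m$; hence $r\le\frac{1}{2}\mu$ is the condition $m\ge 2r$. When every component has order at least $2$ this is more than enough to invoke the Holroyd--Spencer--Talbot disjoint-cliques theorem (and Theorem \ref{dfth}), and when an isolated singleton is present the hypothesis $m\ge 2r$ is exactly what Theorem \ref{mix} requires. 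Moreover, since $r\le\frac{1}{2}\mu=\frac{m}{2}<m=n$ in the base class, the exceptional non-strict case $r=n$ of Theorem \ref{dfth} never arises, so the base case is automatically \emph{strictly} $r$-EKR. The extra restriction $2<r$ in the strictness clause is there to absorb the remaining small-$r$ anomalies ($t=2$, $r=2$) that appear in Theorems \ref{berth} and \ref{dfth}.

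The engine of the reduction is the compression of \cite{hst}, which is equivalent to contracting an edge. The plan is an induction on $|E(G)|$ in which a single compression $\gamma_{uv}$, contracting a carefully chosen non-clique edge, carries $G$ to a graph $G'$ with strictly simpler structure. To make the induction run I would prove three compression lemmas: (a) $\gamma$ sends every intersecting family of independent $r$-sets of $G$ to an intersecting family of $G'$ of size at least as large; (b) the size of the largest star does not increase under $\gamma$, so that the inequality ``maximum intersecting family $\le$ maximum star'' for $G'$ implies it for $G$; and (c) $\gamma$ does not decrease $\mu$, so the hypothesis $r\le\frac{1}{2}\mu$ is preserved all the way down the induction. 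Iterating until no non-clique edge admits a legal compression should terminate at a disjoint union of cliques, where the base case applies.

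For the strictness clause, in the range $2<r<\frac{1}{2}\mu$ I would track equality through the same scheme. Since $r<\frac{m}{2}$ gives the strict inequality $m>2r$ in the base case, the strict part of the disjoint-cliques results forces any extremal family of $G'$ to be a star. I then unwind the compressions, arguing at each step via a Hilton--Milner-type stability statement that any compression which preserved the cardinality must have fixed a star rather than merely matching its size; the hypotheses $r>2$ and $r<\frac{1}{2}\mu$ are precisely what exclude the borderline configurations in which a non-star family ties the star.

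The main obstacle is that this program is known to succeed only when the compression can be steered so that all three lemmas hold simultaneously, and this is exactly what the chordal hypothesis buys in the paper's own theorem: for chordal graphs one can contract inside a clique or along a simplicial vertex and stay in a controlled class, keeping the number of ``independence components'' and hence $\mu$ under control. For an arbitrary $G$ there is no such guarantee. Contracting an edge can merge independence-relevant structure and thereby \emph{lower} $\mu$, violating lemma (c), and it can create independent sets of $G'$ with no preimage in $G$, threatening lemma (b); worse, a non-chordal graph may admit no edge whose contraction is simultaneously family-monotone, star-monotone, and $\mu$-preserving. Overcoming this would require either a simplicial-style elimination order valid beyond chordal graphs, or an entirely different, non-compression argument (for instance a fractional or kernel argument that reads the bound directly off a size-$\mu$ maximal independent set); the absence of such a mechanism is why the statement remains a conjecture rather than a theorem.
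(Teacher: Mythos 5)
There is a fundamental problem here: the statement you are proving is Conjecture \ref{minmax}, the Holroyd--Talbot conjecture, which the paper does not prove and which is open in general --- the paper itself says it ``seems hard to prove or disprove'' and only establishes the non-strict part for restricted classes (Theorem \ref{mainthm}: disjoint unions of chordal graphs containing a singleton). Your submission is accordingly not a proof but a program: the three compression lemmas (a), (b), (c) on which everything rests are announced (``I would prove\ldots'') rather than proved, and you concede in your final paragraph that no mechanism is known to make them hold simultaneously for arbitrary $G$. That concession is correct, and it means the attempt has a genuine, unfixable-as-stated gap. To make the failure concrete: lemma (c) is false even for trees. Take $P_4$ with vertices $1,2,3,4$; every maximal independent set has size $2$, so $\mu(P_4)=2$, but contracting the edge $23$ yields $P_3$, whose middle vertex is a maximal independent set, so $\mu(P_3)=1$. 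Thus a single contraction can halve the budget $r\leq\frac{1}{2}\mu$, and the induction hypothesis is no longer available after the very first step --- even in the chordal setting your scheme claims as its model. Lemma (b) is likewise unsupported: contraction creates adjacencies and destroys independent sets asymmetrically, and there is no general comparison between the largest star of $G/uv$ and that of $G$.

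It is also worth noting that your sketch misdescribes what the paper's actual technique (for the chordal special case) does, in a way that matters. The proof of Theorem \ref{mainthm} does not push the whole family through a contraction and compare maximum stars of $G$ and $G'$; it uses a simplicial vertex $v_1$ with $N[v_1]\subseteq N[v_i]$ to split a compressed family as $|\mathcal{A}|=|\bar{\mathcal{A}'_i}|+|\mathcal{B}'|$ with $\bar{\mathcal{A}'_i}\subseteq\mathcal{J}^r(G-v_i)$ and $\mathcal{B}'\subseteq\mathcal{J}^{(r-1)}(G\downarrow v_i)$ (Lemma \ref{l3}), and --- crucially --- it exploits the isolated vertex $x$: Lemma \ref{l1} pins down a maximum star center at $x$, Lemma \ref{l2}/Corollary \ref{cor1} control $\mu(G-v_i)$ and $\mu(G\downarrow v_i)$ (this is exactly where chordality plus the dominated-vertex condition rescue your failed lemma (c)), and Lemma \ref{l4} gives the exact recursion $|\mathcal{J}^r_x(G)|=|\mathcal{J}^r_x(G-v_i)|+|\mathcal{J}^{(r-1)}_x(G\downarrow v_i)|$ that closes the induction. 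Your proposal has no substitute for the singleton: without one, even identifying a maximum star center is unresolved (the paper leaves this open for trees beyond $r\leq 4$, Conjecture \ref{stree}). Finally, your strictness paragraph invokes a ``Hilton--Milner-type stability statement'' that does not exist at this generality, and your claim that the base case is automatically strict is unsupported for cliques of unequal orders, where the Holroyd--Spencer--Talbot theorems assert only the non-strict bound. In short: the approach cannot be completed, for the reasons you yourself identify, and what can be salvaged of it is precisely the chordal-plus-singleton theorem the paper proves.
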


This conjecture seems hard to prove or disprove; however, restricting attention to
certain classes of graphs makes the problem easier to tackle. Borg and Holroyd \cite{bh}
prove the conjecture for a large class of graphs, which contain a singleton as a component.

\begin{definition}(Borg, Holroyd \cite{bh})\label{mnd}
For a monotonic non-decreasing (\emph{\textsf{mnd}}) sequence $\mathbf{d} = \{d_i\}_{i\in \mathbb{N}}$
of non-negative integers, let $M = M(\mathbf{d})$ be the graph such that $V(M)=\{x_i:i\in \mathbb{N}\}$
and for $x_a,x_b\in V(M)$ with $a<b$, $x_ax_b\in E(M)$ iff $b\leq a+d_a$. Let $M_n=M_n(\mathbf{d})$
be the subgraph of $M$ induced by the subset $\{x_i:i\in [n]\}$ of $V(M)$.
Call $M_n$ an \emph{\textsf{mnd}} graph.
\end{definition}
\begin{definition}{(Borg, Holroyd \cite{bh})}
For $n>2$, $1\leq k<n-1$, $0\leq q<n$, let $C_{q,n}^{k,k+1}$ be the graph with vertex set
$\{v_i:i\in [n]\}$ and edge set $E(C_n^k)\cup \{v_{i}v_{i+k+1 \textrm{ mod } n}:1\leq i\leq q\}$.
If $q>0$, call $C_{q,n}^{k,k+1}$ a modified $k^{th}$ power of a cycle.
\end{definition}
Borg and Holroyd \cite{bh} prove the following theorem.

\begin{theorem}\label{bhth}
Conjecture \ref{minmax} is true if $G$ is a disjoint union of complete multipartite graphs,
copies of \emph{\textsf{mnd}} graphs, powers of cycles, modified powers of cycles, trees,
and at least one singleton.
\end{theorem}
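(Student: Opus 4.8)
The plan is to make the isolated singleton the centre of the extremal star and to reduce the theorem to a single counting inequality. Write $G=\{w\}\sqcup H_1\sqcup\cdots\sqcup H_m$, where $w$ is the singleton and each $H_j$ comes from one of the listed families. Since $w$ is isolated it lies in every maximal independent set, so every base of the hereditary family $\mathcal{J}(G)$ contains $w$ and $\mu(G)=1+\sum_j\mu(H_j)$. The map $B\mapsto B\cup\{w\}$ is a bijection from $\mathcal{J}^{(r-1)}(G-w)$ onto the star $\mathcal{J}_w^{(r)}(G)$, so proving Theorem \ref{bhth} is equivalent to showing that every intersecting $\mathcal{A}\subseteq\mathcal{J}^{(r)}(G)$ satisfies $|\mathcal{A}|\leq|\mathcal{J}^{(r-1)}(G-w)|$. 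The hypothesis $r\leq\mu(G)/2$ enters as the assertion that the minimum base size is at least $2r$: extending any independent $r$-set $A$ to a maximal set $M$ gives $|M\setminus A|\geq r$, hence an independent $r$-set disjoint from $A$. Thus $\mathcal{J}^{(r)}(G)$ is not itself intersecting, and this slack is precisely what the injection below will consume.

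First I would install the edge-compression machinery of \cite{hst}. Fixing a linear order on each $V(H_j)$, to be chosen per class, and for $u<u'$ in a common component setting $\delta_{u,u'}(A)=(A\setminus\{u'\})\cup\{u\}$ whenever $u'\in A$, $u\notin A$ and the result is independent (and $\delta_{u,u'}(A)=A$ otherwise), the associated family operation keeps only the safe replacements. As in \cite{hst}, each compression preserves $|\mathcal{A}|$, the intersecting property, and membership in $\mathcal{J}^{(r)}(G)$, and since $w$ forms its own component it is never touched. Iterating to a fixed point leaves a compressed family of the same size, so I may assume $\mathcal{A}$ is compressed within every component. I would then argue by induction on $m$, peeling off one component at a time: a cross-intersecting inequality between the sub-families meeting a chosen $H_j$ and those avoiding it, together with the additive identity $\mu(G)=1+\sum_j\mu(H_j)$, should reduce a union to a smaller union plus the single base case for one component, much as Theorem \ref{mix} is built up from its constituents.

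The conceptual finish is an injection $\phi:\mathcal{A}\to\mathcal{J}_w^{(r)}(G)$. Because $w$ is isolated, $(A\setminus\{a\})\cup\{w\}$ is an independent $r$-set in the star for every $a\in A$ whenever $w\notin A$, so only the injectivity of $A\mapsto(A\setminus\{a(A)\})\cup\{w\}$ is at stake; the compressed structure should pin down a canonical swap-vertex $a(A)$, and $r\leq\mu(G)/2$ supplies the room that prevents collisions with the members of $\mathcal{A}$ already containing $w$. Non-surjectivity of $\phi$ unless $\mathcal{A}$ is itself a star would then yield the strict part of Conjecture \ref{minmax} in the range $2<r<\mu(G)/2$. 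The real obstacle, and where the restriction to these specific classes is spent, is choosing the per-component orders so that the compressed family is canonical enough for this injection: for complete multipartite graphs the parts must be laid out as intervals, for \textsf{mnd} graphs the native index order $x_1<x_2<\cdots$ of Definition \ref{mnd} should already work, and for powers of cycles and modified powers of cycles one adapts Talbot's cyclic ordering.

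I expect the main difficulty to be the per-class verification that compression leaves the independent-set structure in the shape required for the swap, particularly for trees and for modified powers of cycles. Trees lack any shift-symmetry, so I would root each tree and compress along a leaf-to-root order, running a nested induction that removes a leaf and controls how its incident independent sets recombine; modified powers of cycles break the rotational symmetry that makes the compressed description transparent for powers of cycles, so the canonical form must be tracked through the extra chords by hand. Establishing these two canonical-form lemmas, and checking that they survive the cross-intersecting reduction of the second step, is the technical core on which the whole argument rests.
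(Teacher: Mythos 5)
There is a genuine gap: your argument is a programme, not a proof, and the steps you defer are exactly the ones that fail. First, note that the paper itself gives no proof of this statement --- it is quoted from Borg and Holroyd \cite{bh} --- but the method of \cite{bh}, mirrored in this paper's proof of Theorem \ref{mainthm}, is instructive for where your plan breaks. Your conceptual finish is an injection $\phi:\mathcal{A}\to\mathcal{J}^{(r)}_w(G)$ built from a ``canonical swap-vertex'' $a(A)$ extracted from a fixed point of compression, and you concede that the per-class canonical-form lemmas (in particular for trees and modified powers of cycles) are unestablished; no such canonical form is known for these classes, and this is precisely why \cite{bh} avoids global compressed structure altogether. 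The working scheme is local: choose $v_1,v_2$ in one component with $N[v_1]\subseteq N[v_2]$ (or a class-specific analogue), apply a single compression $f_{1,i}$, split $\mathcal{A}'$ into the sets avoiding $v_i$ and the traces of the sets containing $v_i$, show both derived families are intersecting (Lemma \ref{l3}), and close by induction on $|G|$ using $|\mathcal{J}^r_x(G-v_i)|+|\mathcal{J}^{(r-1)}_x(G\downarrow v_i)|=|\mathcal{J}^r_x(G)|$ (Lemma \ref{l4}), with Lemma \ref{l2} and Corollary \ref{cor1} guaranteeing that $r\leq\frac{1}{2}\mu$ survives the passage to $G-v_2$ and $G\downarrow v_2$. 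The class-by-class work is then only to verify closure of the class and the behavior of $\mu$ under these two operations --- no fixed-point compression, no cross-component cross-intersecting step, and no global injection are needed.

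Two of your intermediate claims are also unsupported as stated. (i) ``Each compression preserves the intersecting property'' is true for the classical Frankl shift, and true in the graph setting when $N[u]\subseteq N[u']$ (then the move $u'\mapsto u$ is never blocked by independence, as the paper notes before defining $f_{1,i}$, and the classical disjointness contradiction goes through); but for the orders you propose on trees, on powers of cycles, and across parts of a complete multipartite graph, a set $B\ni u'$ may remain unmoved merely because $(B\setminus\{u'\})\cup\{u\}$ is not independent, and then a moved $A$ with $A\cap B=\{u'\}$ becomes disjoint from $B$ with no contradiction available, since $(B\setminus\{u'\})\cup\{u\}\notin\mathcal{J}^{(r)}(G)$. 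This is why Talbot's cycle-power argument \cite{tal} involves delicate case analysis and cannot be imported as a black box. (ii) Your strictness claim --- non-surjectivity of $\phi$ unless $\mathcal{A}$ is a star yields the strict part of Conjecture \ref{minmax} --- does not follow: a size inequality obtained from an ad hoc injection carries no structural information about equality cases (a maximum family could match the bound without $\phi$ being onto), and strictness requires a separate characterization of when every inequality in the chain is tight. The sound parts of your proposal ($\mu(G)=1+\sum_j\mu(H_j)$, the bijection $B\mapsto B\cup\{w\}$ onto the star at $w$, and the observation that $\mu(G)\geq 2r$ gives every independent $r$-set a disjoint partner) are exactly the easy reductions; everything after them remains open in your write-up.
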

One of our main results in this paper extends the class of graphs which satisfy
Conjecture \ref{minmax} by proving the conjecture for all chordal graphs which contain a singleton.
It can be noted that the \textsf{mnd} graphs in Theorem \ref{bhth} are chordal.

We also define a special class of chordal graphs, and prove a stronger EKR result for these graphs.
Finally, we consider similar problems for two classes of bipartite graphs, trees and ladder graphs.
\subsection{Main Results}
\begin{definition}
A graph $G$ is a chordal graph if every cycle of length at least $4$ has a chord.
\end{definition}

It is easy to observe that if $G$ is chordal, then every induced subgraph of $G$ is also chordal.
\begin{definition}
A vertex $v$ is called \textit{simplicial} in a graph $G$ if its neighborhood is a clique in $G$.
\end{definition}
Consider a graph $G$ on $n$ vertices, and let $\sigma=[v_1,\ldots,v_n]$ be an ordering
of the vertices of $G$.
Let the graph $G_i$ be the subgraph obtained by removing the vertex set $\{v_1,\ldots,v_{i-1}\}$ from $G$.
Then $\sigma$ is called a \textit{simplicial elimination ordering}
if $v_i$ is simplicial in the graph $G_i$, for each $1\leq i\leq n$.
We state a well known characterization for chordal graphs, due to Dirac \cite{dirac}.
\begin{theorem}\label{dirc}
A graph $G$ is a chordal graph if and only if it has a simplicial elimination ordering.
\end{theorem}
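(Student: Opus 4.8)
The plan is to prove the two directions separately, handling the easy implication first and reserving the bulk of the effort for producing the ordering out of chordality.

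For the reverse direction (a simplicial elimination ordering implies chordality), I would argue by induction on $n=|V(G)|$. Suppose $\sigma=[v_1,\dots,v_n]$ is such an ordering, so that $N(v_1)$ is a clique. Consider any cycle $C$ of length at least $4$ in $G$. If $C$ avoids $v_1$, then it lies in $G_2=G-v_1$, which inherits the elimination ordering $[v_2,\dots,v_n]$ and is chordal by the induction hypothesis, so $C$ has a chord. If $C$ passes through $v_1$, then the two neighbors of $v_1$ on $C$ both lie in $N(v_1)$; since $N(v_1)$ is a clique they are adjacent, and this edge is a chord of $C$. Hence every long cycle has a chord and $G$ is chordal.

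For the forward direction, the heart of the matter is the existence lemma: \emph{every chordal graph that is not complete contains two non-adjacent simplicial vertices} (and every nonempty chordal graph contains at least one). Granting this, the ordering is built greedily: pick a simplicial vertex $v_1$ of $G=G_1$, and observe that $G_2=G-v_1$ is an induced subgraph of a chordal graph, hence chordal; repeating the process yields the desired $\sigma$. To prove the existence lemma I would induct on $n$, the complete case being immediate since then every vertex is simplicial. If $G$ is not complete, choose non-adjacent vertices $a,b$ and a minimal $a$-$b$ separator $S$, and let $A,B$ be the vertex sets of the components of $G-S$ containing $a,b$ respectively (both nonempty, as $a,b\notin S$). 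First I would show $S$ is a clique: if $x,y\in S$ were non-adjacent, minimality of $S$ would force each of $x,y$ to have a neighbor in both $A$ and $B$, so shortest $x$-$y$ paths through $A$ and through $B$ would combine into a chordless cycle of length at least $4$, contradicting chordality. Next, applying the induction hypothesis to the strictly smaller chordal graph induced on $A\cup S$ (smaller because $B\neq\emptyset$), and using that $S$ is a clique so that at most one of a non-adjacent pair of simplicial vertices can lie in $S$, I obtain a simplicial vertex $v\in A$ (or, if that induced subgraph happens to be complete, any vertex of $A$ will do). Because $A$ is a full component of $G-S$, every neighbor of $v$ lies in $A\cup S$, so $v$ is simplicial in $G$ as well. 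Symmetrically there is a simplicial $w\in B$, and since $A,B$ are distinct components $v$ and $w$ are non-adjacent.

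I expect the main obstacle to be this forward direction, and specifically the clique property of minimal separators together with the bookkeeping needed to guarantee that a simplicial vertex of the induced subgraph on $A\cup S$ can be selected inside $A$ and remains simplicial in the whole of $G$. By contrast, the reverse direction and the greedy extraction of the ordering are routine once the existence lemma is established.
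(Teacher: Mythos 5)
Your proof is correct, and since the paper states this theorem without proof (citing Dirac), there is no in-paper argument to diverge from: what you have written is precisely the classical proof from the cited source --- the reverse direction by induction on the vertex deleted, and the forward direction via the lemma that a non-complete chordal graph has two non-adjacent simplicial vertices, established through the fact that minimal separators in chordal graphs are cliques. All the delicate points (every vertex of a minimal $a$--$b$ separator having neighbors in both components, the induced cycle from two shortest paths, and the selection of a simplicial vertex inside $A$ that remains simplicial in $G$) are handled correctly.
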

It is easy to see, using this characterization of chordal graphs,
that the \textsf{mnd} graphs of Definition \ref{mnd} are chordal.
\begin{prop}
If $M_n$ is an \emph{\textsf{mnd}} graph on $n$ vertices, $M_n$ is chordal.
\end{prop}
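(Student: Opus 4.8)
The plan is to exhibit an explicit simplicial elimination ordering and then invoke Theorem~\ref{dirc}. The natural candidate is the ordering $\sigma = [x_1, x_2, \ldots, x_n]$ inherited from the indexing of the vertices. Writing $G_i$ for the subgraph of $M_n$ obtained by deleting $\{x_1,\ldots,x_{i-1}\}$, I would show that $x_i$ is simplicial in $G_i$ for every $i$, so that $\sigma$ is a simplicial elimination ordering and Dirac's characterization forces $M_n$ to be chordal.

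First I would pin down the neighborhood of $x_i$ inside $G_i$. By the definition of $M(\mathbf{d})$, the neighbors of $x_i$ in $M_n$ split into ``backward'' neighbors $x_a$ (with $a<i$ and $i \le a+d_a$) and ``forward'' neighbors $x_b$ (with $b>i$ and $b\le i+d_i$). The whole point of using this particular ordering is that every backward neighbor of $x_i$ has an index smaller than $i$ and has therefore already been deleted in $G_i$; hence the neighborhood of $x_i$ in $G_i$ is exactly the set of forward neighbors $\{x_b : i < b \le \min\{i+d_i, n\}\}$.

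It then remains to verify that this forward neighborhood is a clique. Take two such neighbors $x_a, x_b$ with $i < a < b \le i+d_i$; I must show $x_a x_b \in E(M_n)$, i.e.\ that $b \le a + d_a$. This is precisely where monotonicity of $\mathbf{d}$ is used: since $a > i$, non-decreasing-ness gives $d_a \ge d_i$, and therefore $a + d_a \ge a + d_i > i + d_i \ge b$, which yields the required edge. Thus the neighborhood of $x_i$ in $G_i$ is complete, so $x_i$ is simplicial in $G_i$.

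Since this argument applies verbatim for each $i$ from $1$ to $n$, the ordering $\sigma$ is a simplicial elimination ordering, and Theorem~\ref{dirc} completes the proof. I do not expect a genuine obstacle here; the only point demanding care is the bookkeeping that confirms the surviving neighborhood in $G_i$ is \emph{purely} forward, since it is exactly this reduction that lets the single chain of inequalities $a + d_a \ge a + d_i > i + d_i \ge b$ close the argument.
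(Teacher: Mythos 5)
Your proof is correct and follows exactly the paper's approach: the paper's one-line proof asserts that the natural ordering $[x_1,\ldots,x_n]$ from Definition~\ref{mnd} is a simplicial elimination ordering, and you have simply supplied the verification (surviving neighbors of $x_i$ are purely forward, and monotonicity of $\mathbf{d}$ gives $b \le a + d_a$) that the paper leaves to the reader. No gaps; the inequality chain is sound.
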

\begin{proof}
It can be seen that ordering the vertices of $M_n$, according to
the corresponding degree sequence $\mathbf{d}$, as stated in Definition \ref{mnd},
gives a simplicial elimination ordering.
\end{proof}

Note that, with or without the non-decreasing condition on the sequence $\mathbf{d}$, the resulting graph
is an interval graph --- use the interval $[a,a+d_a]$ for vertex $x_a$ --- which is chordal regardless.

We prove the non-strict part of Conjecture \ref{minmax} for disjoint unions of chordal graphs,
containing at least one singleton.
\begin{theorem}\label{mainthm}
If $G$ is a disjoint union of chordal graphs, including at least one singleton,
and if $r\leq \frac{1}{2}\mu(G)$, then $G$ is $r$-EKR.
\end{theorem}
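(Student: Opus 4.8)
The plan is to use the singleton as the centre of an extremal star. Write $z$ for an isolated vertex of $G$. Since $z$ has empty neighbourhood, adding $z$ to any independent $(r-1)$-set of $G-z$ produces an independent $r$-set, so $|\mathcal{J}_z^{(r)}(G)|=|\mathcal{J}^{(r-1)}(G-z)|$; and for any other vertex $w$ a short swap (send an independent $(r-1)$-set $S$ of $G-N[w]$ to itself if $z\notin S$, and to $(S\setminus\{z\})\cup\{w\}$ otherwise) injects $\mathcal{J}^{(r-1)}(G-N[w])$ into $\mathcal{J}^{(r-1)}(G-z)$, so $\mathcal{J}_z^{(r)}(G)$ is a largest star. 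Hence it suffices to prove that every intersecting family $\mathcal{A}\subseteq\mathcal{J}^{(r)}(G)$ satisfies $|\mathcal{A}|\le|\mathcal{J}_z^{(r)}(G)|$.

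The engine is a strong induction on $|V(G)|$ driven by Dirac's simplicial elimination ordering (Theorem~\ref{dirc}), combined with two compressions. First, because $z$ is isolated, the compression that replaces a vertex $b$ by $z$ in every set that permits it always preserves independence, $r$-uniformity, cardinality, and the intersecting property; applying these for all $b$ makes $\mathcal{A}$ compressed toward $z$. Second, if $v\ne z$ is a simplicial vertex (one exists unless $G$ is edgeless, in which case $\mu(G)=|V(G)|\ge 2r$ and the classical Theorem~\ref{ekrt} finishes the base case), then $N(v)$ is a clique, so $N[v]\subseteq N[u]$ for every $u\in N(v)$; this is exactly the condition guaranteeing that replacing $u$ by $v$ is a legal compression. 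Compressing toward $v$ across $N(v)$ produces a family $\mathcal{A}'$ of the same size that is intersecting and compressed toward both $z$ and $v$.

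I would then split $\mathcal{A}'$ into the sets $\mathcal{A}'_{\bar v}$ that omit $v$ and the link $\mathcal{L}=\{A\setminus\{v\}:v\in A\in\mathcal{A}'\}$. Here $\mathcal{A}'_{\bar v}$ is an intersecting subfamily of $\mathcal{J}^{(r)}(G-v)$, and for $B\in\mathcal{A}'_{\bar v}$ and $L\in\mathcal{L}$ one has $B\cap(L\cup\{v\})=B\cap L\ne\emptyset$, so $\mathcal{A}'_{\bar v}$ and $\mathcal{L}$ are cross-intersecting. The crucial claim, and the place where $r\le\tfrac12\mu(G)$ is spent, is that the compression toward $z$ forces the link $\mathcal{L}$ itself to be an intersecting subfamily of $\mathcal{J}^{(r-1)}(G-N[v])$ (this fails for general graphs, consistent with Conjecture~\ref{minmax} being open, so chordality must enter through the simplicial vertex $v$). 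Granting this, $G-v$ and $G-N[v]$ are again disjoint unions of chordal graphs containing the singleton $z$, so the induction hypothesis gives $|\mathcal{A}'_{\bar v}|\le|\mathcal{J}^{(r-1)}(G-v-z)|$ and $|\mathcal{L}|\le|\mathcal{J}^{(r-2)}(G-N[v]-z)|$. Summing and using the identity obtained by splitting the independent $(r-1)$-sets of $G-z$ according to whether they contain $v$,
\[
|\mathcal{J}^{(r-1)}(G-z)| = |\mathcal{J}^{(r-1)}(G-z-v)| + |\mathcal{J}^{(r-2)}(G-z-N[v])|,
\]
yields $|\mathcal{A}|=|\mathcal{A}'|\le|\mathcal{J}_z^{(r)}(G)|$, as required.

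The main obstacle is the link lemma of the previous paragraph: showing that being compressed toward $z$, together with $v$ simplicial and $r\le\tfrac12\mu(G)$, forces $\mathcal{L}$ to be intersecting. A secondary, but genuinely delicate, obstacle is verifying that the induction hypothesis is actually available on the two smaller graphs. Because $v$ is simplicial, $N[v]$ is a clique, so extending a minimum maximal independent set of $G-N[v]$ to one of $G$ adds at most one vertex; this gives $\mu(G-N[v])\ge\mu(G)-1\ge 2(r-1)$, which is exactly what the link recursion needs. The analogous bound $\mu(G-v)\ge\mu(G)-1$ only guarantees $\mu(G-v)\ge 2r-1$, so the boundary case $\mu(G-v)=2r-1$ must be treated separately — either by a sharper choice of simplicial vertex (e.g. in a component governing $\mu$) or by bounding $\mathcal{A}'_{\bar v}$ directly in that regime — and I expect this parity bookkeeping to be the fiddliest part of the argument after the link lemma itself.
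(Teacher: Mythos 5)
Your scaffolding matches the paper's (largest star at the isolated vertex $z$, induction on $|V(G)|$ with the edgeless base case via Theorem \ref{ekrt}, a simplicial vertex from Theorem \ref{dirc}, and the star identity $|\mathcal{J}^{(r-1)}(G-z)|=|\mathcal{J}^{(r-1)}(G-z-v)|+|\mathcal{J}^{(r-2)}(G-z-N[v])|$, which is Lemma \ref{l4}), but the heart of your argument --- the ``link lemma'' asserting that $z$-compression forces $\mathcal{L}=\{A\setminus\{v\}: v\in A\in\mathcal{A}'\}$ to be intersecting --- is not proved, and as stated it is false. Take $G$ to consist of many isolated vertices (plus one edge elsewhere so the inductive step, not the base case, applies), with $v,a,z$ pairwise nonadjacent, and $\mathcal{A}=\{\{v,a\},\{v,z\},\{a,z\}\}$ with $r=2\leq \frac{1}{2}\mu(G)$. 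This family is intersecting and fully compressed toward $z$ (every replacement of an element of $\{v,a\}$ by $z$ is already present), and the compression across $N(v)$ is vacuous, yet the link at $v$ is $\{\{a\},\{z\}\}$, which is not intersecting. So no amount of $z$-compression can rescue your decomposition; you would have to bound $|\mathcal{A}'_{\bar v}|+|\mathcal{L}|$ jointly via the cross-intersecting property you correctly observed, which is a genuinely harder problem than you have set up machinery for.

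The paper avoids this by orienting the compression the opposite way, and this single change dissolves both of your acknowledged obstacles. With $v_1$ simplicial and $v_1v_i\in E(H)$, simpliciality gives $N[v_1]\subseteq N[v_i]$, and the compression $f_{1,i}$ moves sets \emph{off the neighbor} $v_i$ \emph{onto} $v_1$ (legal precisely because of the closed-neighborhood containment); the split is then taken at $v_i$, not at $v_1$. The link at $v_i$, namely $\mathcal{B}'=\{A\setminus\{v_i\}: v_i\in A\in\mathcal{A}'\}$, is \emph{automatically} intersecting: a set retaining $v_i$ after compression does so only because its twin $A\setminus\{v_i\}\cup\{v_1\}$ already lies in $\mathcal{A}$, so for $A,B\in\mathcal{B}'$ the sets $A\cup\{v_1\}$ and $B\cup\{v_i\}$ both lie in $\mathcal{A}$ and their common element must lie in $A\cap B$ (Lemma \ref{l3}); no hypothesis on $r$ is spent here. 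Moreover, your parity worry about $\mu(G-v)=2r-1$ is an artifact of deleting the simplicial vertex: deleting the \emph{dominating} vertex $v_i$ instead gives $\mu(G-v_i)\geq\mu(G)$, since a maximal independent set $I$ of $G-v_i$ that failed to be maximal in $G$ would miss all of $N[v_i]\supseteq N[v_1]$ and then $I\cup\{v_1\}$ would already be independent in $G-v_i$ (Lemma \ref{l2}); together with $\mu(G\downarrow v_i)\geq\mu(G)-1$ this makes the induction hypothesis available on both smaller graphs with no boundary case at all (Corollary \ref{cor1}). In short: keep your framework, but compress from the neighbor onto the simplicial vertex and take the link at the deleted neighbor --- the intersecting property of the link then comes from the twin argument, not from compression toward $z$.
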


We also consider graphs which do not have singletons.
Consider a class of chordal graphs constructed as follows.

Let $P_{n+1}$ be a path on $n$ edges with
$V(P_{n+1})=\{v_1,\ldots,v_{n+1}\}$. Label the edge $v_iv_{i+1}$ as
$i$, for each $1\leq i\leq n$. A \textit{chain}
of complete graphs, of length $n$, is obtained from $P_{n+1}$ by replacing each edge of
$P_{n+1}$ by a complete graph of order at least $2$ in the following
manner: to convert edge $i$ of $P_{n+1}$ into $K_s$, introduce a
complete graph $K_{s-2}$ and connect $v_i$ and $v_{i+1}$
to each of the $s-2$ vertices of the complete graph. Call the resulting complete
graph $G_i$, and call each $G_i$ a link of the chain. We call
$v_i$ and $v_{i+1}$
the \textit{connecting vertices}
of this complete graph, with the exception of $G_1$ and $G_n$, which
have only one connecting vertex each (the ones shared with $G_2$ and
$G_{n-1}$ respectively). In general, for each $2\leq i\leq n$, call $v_i$ the $(i-1)^{th}$ connecting vertex of $G$.
Unless otherwise specified, we will refer to a chain of complete graphs as just a chain.
We will call an isolated vertex a \textit{trivial chain} (of length $0$), while a complete graph
is simply a chain of length $1$. Call a chain of length $n$ \textit{special}
if $n\in \{0,1\}$ or if $n\geq 2$ and the following conditions hold:

\begin{enumerate}
\item $|G_{i}|\geq |G_{i-1}|+1$ for each $2\leq i\leq n-1$;
\item $|G_n|\geq |G_{n-1}|$.
\end{enumerate}
We prove the following results for special chains.
\begin{theorem} \label{mthm}
If $G$ is a \textit{special} chain, then
$G$ is $r$-EKR for all $r\geq 1$.
\end{theorem}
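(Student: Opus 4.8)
The plan is to induct on the length $n$ of the chain. The cases $n\in\{0,1\}$ are immediate: a single complete graph (or a singleton) has no independent set of size $\geq 2$, so $\mathcal{J}^{(r)}(G)$ is empty for $r\geq 2$ and consists of pairwise disjoint singletons for $r=1$, in which case a star (a single singleton) is already a maximum intersecting family. For the inductive step, let $G_1$ be the smallest link, with connecting vertex $v_2$ (shared with $G_2$) and $s_1-1$ \emph{private} vertices $V(G_1)\setminus\{v_2\}$, among which we single out the endpoint $v_1$. I will take the star at $v_1$ as the family to beat. Since $N[v_1]=V(G_1)$, deleting $N[v_1]$ removes $G_1$ together with $v_2$, so $|\mathcal{J}_{v_1}^{(r)}(G)|=|\mathcal{J}^{(r-1)}(H)|$, where $H=G-V(G_1)$ is the chain $G_2-v_2,G_3,\dots,G_n$. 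The first routine step is to check that $H$ is again a \emph{special} chain of length $n-1$: condition (1) guarantees $|G_2|-1<|G_3|<\cdots$ after $G_2$ loses $v_2$, and condition (2) is preserved for the tail. This is what makes the induction hypothesis available on $H$, and also what should make $v_1$ a maximum star center.

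Next I would normalize an arbitrary intersecting family $\mathcal{A}\subseteq\mathcal{J}^{(r)}(G)$ by the compression technique of \cite{hst}. For a private vertex $w$ of $G_1$ one has $N(w)=V(G_1)\setminus\{w\}\subseteq V(G_1)\cup V(G_2)=N[v_2]$, so the compression replacing $v_2$ by $w$ maps independent sets to independent sets and, by the domination, preserves the intersecting property; the analogous compressions apply to every connecting vertex relative to the private vertices of its smaller incident link. Applying these to a fixed point lets me assume $\mathcal{A}$ is pushed toward $G_1$, so that the sets containing the connecting vertex $v_2$ are controlled by (and may be charged against) sets containing the private vertex $v_1$.

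I would then split the compressed family according to its interaction with $G_1$: let $\mathcal{A}_{\mathrm{priv}}$ be the sets meeting a private vertex of $G_1$ and $\mathcal{A}_{\mathrm{avoid}}$ the sets disjoint from $V(G_1)$ (the $v_2$-sets having been absorbed by compression). A set in $\mathcal{A}_{\mathrm{priv}}$ contains exactly one private vertex $x$ of $G_1$ (they form a clique) and no other vertex of $G_1$, so removing $x$ gives an $(r-1)$-set of $H$; write $\mathcal{B}_x$ for the collection of these $(r-1)$-sets. Because two sets through distinct private vertices $x\neq x'$ can only meet inside $H$, the families $\{\mathcal{B}_x\}_{x}$ are pairwise \emph{cross-intersecting} in $\mathcal{J}^{(r-1)}(H)$, and every set of $\mathcal{A}_{\mathrm{avoid}}$ (an $r$-set of $H$, forming an intersecting family by itself) must meet each member of each $\mathcal{B}_x$. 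Thus the whole problem reduces to bounding the total size of this cross-intersecting system inside the shorter special chain $H$, the target being $|\mathcal{J}^{(r-1)}(H)|$.

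The main obstacle is exactly this final cross-intersecting estimate. The danger is the configuration in which all $s_1-1$ families $\mathcal{B}_x$ coincide with a single large star of $H$; then $\sum_x|\mathcal{B}_x|=(s_1-1)\,|\mathcal{J}_z^{(r-1)}(H)|$, and $r$-EKR for $G$ requires this to stay below $|\mathcal{J}^{(r-1)}(H)|$. Here the \emph{special} hypothesis does the work: the smallest link of $H$ has order $s_2-1\geq s_1-1$, so the number $s_1-1$ of private vertices of $G_1$ never exceeds the smallest clique size in $H$, which is precisely the regime in which a star of $H$ occupies at most a $1/(s_1-1)$ fraction of $\mathcal{J}^{(r-1)}(H)$. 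I therefore expect the clean way to close the induction is to strengthen the inductive statement to carry such a ratio (equivalently, a cross-intersecting) bound alongside the $r$-EKR conclusion, proving both simultaneously; verifying that the special size conditions are exactly what sustain this strengthened hypothesis, while also folding in the $r$-set family $\mathcal{A}_{\mathrm{avoid}}$, is where the real work of the proof lies.
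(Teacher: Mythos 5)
Your setup is sound as far as it goes: $H=G-V(G_1)$ is indeed a special chain of length $n-1$, $|\mathcal{J}^{(r)}_{v_1}(G)|=|\mathcal{J}^{(r-1)}(H)|$, and the split into the families $\mathcal{B}_x$ and $\mathcal{A}_{\mathrm{avoid}}$ is a correct decomposition. But the argument has a genuine gap, and you name it yourself: the closing estimate $\sum_x|\mathcal{B}_x|+|\mathcal{A}_{\mathrm{avoid}}|\leq |\mathcal{J}^{(r-1)}(H)|$ is never proved, and the plain $r$-EKR induction hypothesis on $H$ cannot give it. Each $\mathcal{B}_x$ need not be intersecting at all --- any two of its members met at $x$ before $x$ was deleted, so $\mathcal{B}_x$ can be an arbitrary subfamily of $\mathcal{J}^{(r-1)}(H)$ subject only to cross-intersection with the others --- so EKR applied to $H$ says nothing about any single $\mathcal{B}_x$, let alone the sum; note the bound is tight (take one $\mathcal{B}_x$ to be all of $\mathcal{J}^{(r-1)}(H)$ and the rest empty), so the needed cross-intersecting lemma is a nontrivial extremal statement in its own right. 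Saying you ``expect'' to strengthen the inductive statement to carry a ratio bound is a plan for a proof, not a proof: no strengthened statement is formulated, and its preservation under the induction is not checked. A second, smaller error: compression does not ``absorb'' the sets through $v_2$. After the exchange $v_2\mapsto w$ stabilizes, a set $A\ni v_2$ survives exactly when $A\setminus\{v_2\}\cup\{w\}$ already lies in $\mathcal{A}$, and the compressed family need not even be intersecting (if $A$ moves while $B$ stays and $A\cap B=\{v_2\}$, the images can be disjoint); this is precisely why the paper's Lemma \ref{l3}-style bookkeeping splits the family into pieces rather than claiming the compressed family is intersecting.

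It is worth seeing how the paper sidesteps the impasse you ran into: it works at the \emph{other} end of the chain, contracting the largest link $G_n=K_s$ to a point (Lemma \ref{tl1}) rather than deleting the smallest one. The problematic boundary sets --- those meeting both $G_{n-1}=K_t$ and $G_n$ --- are indexed by pairs $(m_i,n_j)$ and regrouped along a proper edge-colouring of $K_{t-1,s-1}$ into $s-1$ matchings; each matching class $\mathcal{F}_{M_{i-1}}$, merged with $\mathcal{C}_{n_i}$, is an honest \emph{intersecting} family in $\mathcal{J}^{(r-1)}(G-G_n)$, so ordinary induction bounds each by a maximum star, and $|\mathcal{A}|\leq |\mathcal{J}^{r}_1(G/G_n)|+(s-1)|\mathcal{J}^{(r-1)}_1(G-G_n)|=|\mathcal{J}^{r}_1(G)|$ follows by partitioning the star at an internal vertex of $G_1$ (Claim \ref{clmstr}). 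Specialness is used exactly where it can be checked: $t\leq s$ makes the matchings saturate the $G_{n-1}$ side, and both $G/G_n$ and $G-G_n$ remain special. In short, the matching trick converts the cross-intersecting mass into $s-1$ genuinely intersecting families, which is the device that makes your missing lemma unnecessary; without it (or a proved substitute), your argument does not close.
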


\begin{theorem}\label{mthm2}
If $G$ is a disjoint union of $2$ \textit{special} chains, then $G$ is $r$-EKR for all $r\geq 1$.
\end{theorem}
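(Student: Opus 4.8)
The plan is to build on Theorem~\ref{mthm} and argue by induction, combining the edge-contraction (compression) technique of Holroyd--Spencer--Talbot with the deletion--contraction recursion for independent sets. Write $G = C \sqcup D$, where $C$ and $D$ are special chains with links $A_1,\dots,A_p$ and $B_1,\dots,B_q$; by the definition of a special chain the end-link $A_1$ (resp.\ $B_1$) is the smallest in its chain, and its non-connecting path-endpoint is a simplicial vertex lying in no other link. Among the two smallest end-links I would take the globally smallest, say $A_1$, and let $v$ be its non-connecting vertex, so that $N[v]=V(A_1)$. This $v$ is the proposed star centre, and the first task is to verify that $\mathcal{J}^{(r)}_v(G)$ is the largest star, i.e.\ that $|\mathcal{J}^{(r-1)}(G-N[v])|$ is maximized here; intuitively this holds because deleting $N[v]$ removes the smallest link together with a single shared connecting vertex, and hence destroys the fewest independent $(r-1)$-sets.

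The core of the argument is the recursion
$$|\mathcal{J}^{(r)}(G)| = |\mathcal{J}^{(r-1)}(G-N[v])| + |\mathcal{J}^{(r)}(G-v)|,$$
applied to an arbitrary intersecting subfamily $\mathcal{A}$. Splitting $\mathcal{A}$ into $\mathcal{A}_v=\{A\in\mathcal{A}:v\in A\}$ and $\mathcal{A}_{\bar v}=\mathcal{A}\setminus\mathcal{A}_v$, I would apply the inductive hypothesis to the smaller graph $G-v$ to bound $|\mathcal{A}_{\bar v}|$ by the largest star of $G-v$, and bound $|\mathcal{A}_v|$ by $|\mathcal{J}^{(r-1)}(G-N[v])|$; the remaining combinatorial work is to show that these two bounds together do not exceed the largest star of $G$, which via the recursion reduces to an injection of the non-star part into the star. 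The graph $G-v$ is again a disjoint union of chains, but $A_1$ has lost a vertex, so a preliminary step is to check that $G-v$ stays within the scope of the induction: either it remains a union of two special chains, or a component degenerates, in which case the single-chain statement (Theorem~\ref{mthm}) applies directly. The boundary cases, where deleting $v$ collapses a link or reduces a whole chain to a singleton, are isolated and dispatched separately.

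Before running the recursion cleanly I would first apply compressions: for each edge $xy$ with $x$ nearer to $v$ in the natural link-ordering, perform the $(x\to y)$ compression on $\mathcal{A}$. As in Holroyd--Spencer--Talbot these operations preserve $r$-uniformity, keep the family inside $\mathcal{J}^{(r)}(G)$, preserve the intersecting property, and do not decrease $|\mathcal{A}|$, so it suffices to prove the bound for a fully compressed family. The purpose of compressing is to force the pairwise intersections of members of $\mathcal{A}$ to lie, as far as possible, inside the component $C$ that contains $v$, after which the single-chain EKR property (Theorem~\ref{mthm}) together with an elementary cross-intersecting inequality between the $C$-parts and the $D$-parts of the sets can be used to control $\mathcal{A}_{\bar v}$.

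The main obstacle is exactly this cross-component intersection phenomenon: two members of $\mathcal{A}$ may intersect inside $C$ while a different pair intersects only inside $D$, so $\mathcal{A}$ does \emph{not} split into two intersecting families and the single-chain result cannot be invoked componentwise in any naive fashion. Resolving this---showing that after full compression every required intersection can be localized to the chain containing $v$, and quantifying the resulting cross-intersecting bound so that it matches the star count exactly---is the delicate heart of the proof, and is also the reason the statement is limited to the union of \emph{two} chains rather than an arbitrary number.
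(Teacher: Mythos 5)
There is a genuine gap, and it sits exactly where you place the weight of the argument. Your central step bounds $|\mathcal{A}_v|$ by $|\mathcal{J}^{(r-1)}(G-N[v])|$ and $|\mathcal{A}_{\bar v}|$ by the largest star of $G-v$, and then asks to ``show that these two bounds together do not exceed the largest star of $G$.'' But $|\mathcal{J}^{(r-1)}(G-N[v])|$ is \emph{exactly} $|\mathcal{J}^{(r)}_v(G)|$, the quantity you are trying to match, so the sum of your two bounds overshoots every star of $G$ by an entire star of $G-v$: as stated the inequality is arithmetically impossible, and all of the necessary slack would have to come from an interaction between $\mathcal{A}_v$ and $\mathcal{A}_{\bar v}$ that you explicitly defer. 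The proposed rescue --- compressions toward $v$ that ``localize'' every pairwise intersection inside the component $C$ containing $v$ --- cannot work as described: a compression moves elements only along edges, hence within a single component, so any two members of $\mathcal{A}$ lying entirely inside $D$ (which a compressed intersecting family may well contain, e.g.\ a subfamily of a star of $D$) will forever intersect only inside $D$. This is precisely the known obstruction for disconnected graphs without a singleton --- it is why Theorem \ref{mainthm} and the results of \cite{hst,bh} need an isolated vertex as an anchor --- and you correctly identify it as ``the delicate heart'' but leave it unresolved. Finally, your choice of star center (the non-connecting vertex of the globally smallest end-link) is only asserted ``intuitively''; the star size is $|\mathcal{J}^{(r-1)}(G-N[v])|$, which depends on the full structure of both chains, not merely on the size of the deleted link, so this too would need proof.

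The paper avoids all of this by never deleting at the star center. After dispatching the base cases ($n_1+n_2\leq 1$, or $n_1=n_2=1$, where $\alpha(G)=2$ and Theorem \ref{2ekr} settles $r=2$ while larger $r$ hold vacuously), it contracts the \emph{last} link $G_n=K_s$ of whichever chain has length at least $2$, exactly as in the proof of Theorem \ref{mthm}: Lemma \ref{tl1} decomposes a maximum intersecting family into a family $\mathcal{B}$ intersecting in $\mathcal{J}^r(G/G_n)$, families $\mathcal{C}_i$ intersecting in $\mathcal{J}^{(r-1)}(G-G_n)$, and boundary families $\mathcal{E}_i$ that are absorbed into intersecting families of $\mathcal{J}^{(r-1)}(G-G_n)$ via the matching partition of $K_{t-1,s-1}$ (this is the one place specialness, $t\leq s$, is used). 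Since $G/G_n$ and $G-G_n$ are again disjoint unions of two special chains, induction on $r$ and on $n_1+n_2$ closes the argument. Crucially, the intersecting property of each piece is verified by lifting representatives back into $\mathcal{A}$, so it holds no matter in which component the intersections happen to lie; no localization of intersections and no cross-intersecting inequality between the $C$-parts and $D$-parts is ever needed. If you want to salvage your route, the missing ingredient is a replacement for that lifting mechanism, and at present your proposal does not contain one.
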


We will also consider similar problems for bipartite graphs.
A basic observation about complete bipartite graphs, and its obvious
generalization for complete multipartite graphs, are mentioned below.
\begin{itemize}
\item If $G=K_{m,n}$ and $m\leq n$, then $G$ is $r$-EKR for all
$r\leq \frac{m}{2}$.
\item If $G=K_{m_1,\ldots,m_k}$, with $m_1\leq m_2\leq \ldots \leq
m_k$, then $G$ is $r$-EKR for all $r\leq \frac{m_1}{2}$.
\end{itemize}

It is easy to see why these hold. If $\mathscr{B}\subseteq
\mathscr{J}^r(G)$ is intersecting, then each $A\in \mathscr{B}$ lies
in the same partite set. Clearly, if $2r\leq m\leq n$, then $G$ is
$r$-EKR by Theorem \ref{ekrt}. A similar argument works for complete
multipartite graphs as well.

Holroyd and Talbot \cite{ht} proved Conjecture \ref{minmax} for a disjoint union of two complete multipartite graphs.

If we consider non-complete bipartite graphs with high minimum degree, it seems that they usually
have low $\mu$ (always at most $\min\{n-\delta,n/2\}$).
Instead, in this paper, we consider bipartite graphs with low maximum
degree in order to have higher values of $\mu$
(always at least $\frac{n}{\Delta+1}$).
In particular, we look at trees and ladder graphs, two such classes of sparse bipartite graphs.

One of the difficult problems in dealing with graphs without singletons is that of finding centers of maximum stars.
We consider this problem for trees, and conjecture that there is a maximum star in a tree that is centered at a leaf.

\begin{conj}\label{stree}
For any tree $T$ on $n$ vertices, there exists a leaf $x$ such that for any $v\in V(T)$,
$|\mathcal{J}^r_v(T)|\leq |\mathcal{J}^r_v(T)|$.
\end{conj}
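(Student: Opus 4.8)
The plan is to rewrite each star as a residual independent-set count and then to locate the maximum; throughout, $N[v]$ denotes the closed neighborhood and $T-N[v]$ the subgraph induced on $V(T)\setminus N[v]$. Since an independent $r$-set containing $v$ is exactly $\{v\}$ together with an independent $(r-1)$-set of $T-N[v]$, we have $|\mathcal{J}^{(r)}_v(T)|=|\mathcal{J}^{(r-1)}(T-N[v])|$. (The right-hand side of the stated inequality should read $|\mathcal{J}^{(r)}_x(T)|$; equivalently, the goal is that some maximum star is centered at a leaf.) I would first record the monotonicity that $W\subseteq W'\subseteq V(T)$ implies $|\mathcal{J}^{(s)}(T[W])|\le|\mathcal{J}^{(s)}(T[W'])|$, because every independent set of an induced subgraph is an independent set of the larger graph. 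The cases $r=1$ (all stars have size $1$) and $r=2$ (where $|\mathcal{J}^{(2)}_v(T)|=n-1-\deg(v)$ is maximized at minimum degree, hence at a leaf) are then immediate, so I may assume $r\ge 3$. Next, if $x$ is a leaf with neighbor $v$, then $N[x]=\{x,v\}\subseteq N[v]$, so $T-N[v]$ is an induced subgraph of $T-N[x]$ and monotonicity gives $|\mathcal{J}^{(r)}_v(T)|\le|\mathcal{J}^{(r)}_x(T)|$. Hence every vertex adjacent to a leaf is dominated by that leaf, and I may assume that a global maximizer $v^{\ast}$ is internal with all of its neighbors internal as well.

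To finish I would try to push the center from $v^{\ast}$ out to a leaf $x$ along the unique $v^{\ast}$--$x$ path, comparing consecutive stars across each edge. Deleting an edge $uv$ splits $T$ into components $A\ni u$ and $B\ni v$; since $N[v]=\{u\}\cup N_B[v]$ and $N[u]=\{v\}\cup N_A[u]$, the independence polynomial $I(H;t)=\sum_s|\mathcal{J}^{(s)}(H)|\,t^s$, which multiplies over disjoint unions, factors as $I(T-N[v];t)=I(A-u;t)\,I(B-N_B[v];t)$ and $I(T-N[u];t)=I(A-N_A[u];t)\,I(B-v;t)$. Writing $P=I(A-u)$, $P'=I(A-N_A[u])$, $Q=I(B-v)$, $Q'=I(B-N_B[v])$, each primed polynomial is obtained from its unprimed counterpart by deleting further vertices, so $P'\preceq P$ and $Q'\preceq Q$ coefficientwise by the monotonicity above. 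The quantity I must control is the $(r-1)$-st coefficient of $PQ'-P'Q$; because neither product dominates the other termwise, its sign is not forced, and this is exactly where the real work lies. To use the freedom in choosing $x$, I would take $x$ to be an endpoint of a longest path through $v^{\ast}$, so that the factor on the $B$-side is governed by a path, whose independence polynomials have explicit Fibonacci-type, strongly log-concave coefficients, and then attempt the single-coefficient estimate $[t^{r-1}]\,PQ'\le[t^{r-1}]\,P'Q$ by a direct convolution bound.

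As an alternative I would induct on $|V(T)|$. Deleting a leaf $\ell$ with neighbor $m$ to form $T'=T-\ell$ and splitting independent sets by whether they contain $\ell$ gives, for $v\neq\ell$, the identity $|\mathcal{J}^{(r)}_v(T)|=|\mathcal{J}^{(r)}_v(T')|+c_\ell(v)$, where $c_\ell(v)\ge 0$ counts the independent $r$-sets of $T$ containing both $v$ and $\ell$, and $c_\ell(m)=0$. The inductive hypothesis supplies a leaf maximizer $x'$ of $T'$, and the problem is to lift it to a leaf maximizer of $T$. The \textbf{main obstacle} is the correction term $c_\ell(v)$: for internal $v$ it can be large and may push an internal vertex above the leaves---precisely the failure of coefficientwise domination seen in the edge-exchange. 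To tame it I would delete a leaf $\ell$ at maximum distance from $v^{\ast}$, so that $c_\ell(v^{\ast})$ is as small as possible, while simultaneously checking that the inductively supplied $x'$ remains a leaf of $T$ and still absorbs its own correction. Reconciling these two estimates---bounding $c_\ell(v)$ for internal $v$ while keeping $c_\ell(x)$ large at the target leaf---is the crux on which a complete proof turns, and I expect it to need the path and log-concavity input of the first approach rather than the induction by itself.
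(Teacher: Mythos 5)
You have correctly diagnosed the typo (the right side should be $|\mathcal{J}^r_x(T)|$), and the sound pieces of your plan are indeed sound: the identity $|\mathcal{J}^{(r)}_v(T)|=|\mathcal{J}^{(r-1)}(T-N[v])|$, the cases $r=1,2$ (the paper handles $r=2$ the same way, via $|\mathcal{J}^{2}_v(T)|=n-1-d(v)$), and the observation that $N[x]\subseteq N[v]$ for a leaf $x$ with neighbor $v$, so every leaf dominates its neighbor. But be aware that the statement is a \emph{conjecture} in the paper; the authors prove it only for $r\leq 4$ (Theorem \ref{tstar}), by explicit injections with case analysis, and they explicitly remark that the general case is hard. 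Your proposal does not close the gap either, and to your credit you say so: both of your routes terminate at an unproven crux.

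The gap is not merely unfinished bookkeeping; the central step of your first route is false as stated. You want to push the center from $v^{\ast}$ to a leaf by establishing $[t^{r-1}]PQ'\leq[t^{r-1}]P'Q$ across each edge of a $v^{\ast}$--$x$ path, but the paper's example in Figure \ref{figtree} ($n=10$, $r=5$) exhibits a leaf (vertex $8$, star size $9$) whose star is strictly smaller than that of the internal vertex $1$ (star size $10$). Hence for some choices of target leaf, some edge on the path must see a strict \emph{decrease}, so the single-edge coefficient estimate cannot hold uniformly; the conjecture survives only because \emph{some other} leaf (here $9$ or $10$) is maximum. This means the choice of leaf is essential, and your "endpoint of a longest path through $v^{\ast}$" heuristic carries the entire burden with no supporting argument --- log-concavity of path independence polynomials constrains only the $B$-side factor $Q,Q'$ and does not force the sign of the mixed convolution $PQ'-P'Q$, since $P,P'$ come from an arbitrary tree. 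Your inductive fallback inherits the same unresolved tension (making $c_\ell(v)$ small at internal vertices while keeping the target leaf's correction large), which you acknowledge but do not resolve. In short: what you actually establish (leaf beats its own neighbor, plus $r\leq 2$) is strictly weaker than the paper's partial result for $r\leq 4$, and the genuinely new idea needed --- a principled selection of the maximizing leaf compatible with some monotone comparison scheme --- is still missing.
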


We prove this conjecture for $r\leq 4$.
\begin{theorem} \label{tstar} Let $1\leq r\leq 4$. Then, a maximum sized
star of $r$-independent vertex sets of $T$ is centered at a leaf.
\end{theorem}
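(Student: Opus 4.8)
The plan is to convert each quantity $|\mathcal{J}_v^{(r)}(T)|$ into a count of small independent sets and then compare it directly against a single, well-chosen leaf. Writing $N[v]$ for the closed neighbourhood of $v$ and $i_k(H)$ for the number of independent $k$-subsets of a graph $H$, the basic identity is $|\mathcal{J}_v^{(r)}(T)| = i_{r-1}(T - N[v])$, obtained by deleting $v$ together with the vertices it forbids. Since $T$ is triangle-free and $r\le 4$, I only ever need $i_0,i_1,i_2,i_3$ of a triangle-free graph $H$, for which $i_0(H)=1$, $i_1(H)=|V(H)|$, $i_2(H)=\binom{|V(H)|}{2}-|E(H)|$, and $i_3(H)=\binom{|V(H)|}{3}-|E(H)|\,(|V(H)|-2)+p(H)$, where $p(H)$ counts the paths of length two in $H$. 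First I would record the tree-specific evaluations for $H=T-N[v]$: one has $|V(H)|=n-1-d(v)$ and, since in a tree no two neighbours of $v$ are adjacent, $|E(H)|=(n-1)-S(v)$, where $S(v):=\sum_{u\in N(v)}d(u)=d(v)+|N_2(v)|$ and $N_2(v)$ is the second neighbourhood; in particular $S(v)\le n-1$. The path-count $p(H)$ I would express through the degree sequence and the distances from $v$.

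The cases $r\le 2$ are immediate: for $r=1$ every star has size $1$, while for $r=2$ one gets $|\mathcal{J}_v^{(2)}(T)|=n-1-d(v)$, maximised exactly at the minimum-degree vertices, i.e.\ the leaves. A useful auxiliary fact for all $r$ is the following monotonicity lemma: if $x$ is a leaf with support vertex $y$ then $N[x]=\{x,y\}\subseteq N[y]$, so $T-N[y]$ is an induced subgraph of $T-N[x]$, whence $|\mathcal{J}_y^{(r)}(T)|=i_{r-1}(T-N[y])\le i_{r-1}(T-N[x])=|\mathcal{J}_x^{(r)}(T)|$. Thus no support vertex can exceed its pendant leaf, and I may restrict attention to dominating the remaining vertices.

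For $r=3$ I would prove the theorem outright by comparing with the best leaf. Let $D=\max\{d(y):y\text{ a support vertex}\}$ and let $x^*$ be a leaf whose neighbour attains this maximum, so that $|\mathcal{J}_{x^*}^{(3)}(T)|=\binom{n-2}{2}-(n-1)+D$. For an arbitrary $v$ with $d:=d(v)$ the identity above gives $|\mathcal{J}_v^{(3)}(T)|=\binom{n-1-d}{2}-(n-1)+S(v)$, so $x^*$ dominates $v$ as soon as $\binom{n-2}{2}-\binom{n-1-d}{2}\ge S(v)-D$. The left-hand side equals $(d-1)(2n-4-d)/2$, which is at least $n-3$ for every $d$ with $2\le d\le n-1$, while $S(v)-D\le (n-1)-2=n-3$, because $S(v)\le n-1$ and every support vertex of a tree on at least three vertices has degree at least $2$; the case $d=1$ is covered by the monotonicity lemma. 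Hence $x^*$ is a leaf that maximises the $3$-star.

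The same strategy drives $r=4$, but this is where the real work lies. Here $|\mathcal{J}_v^{(4)}(T)|=\binom{n-1-d}{3}-\big((n-1)-S(v)\big)(n-3-d)+p_v$, with $p_v=p(T-N[v])$. The extra ingredient is the path-count $p_v$, which, unlike $S(v)$, is \emph{not} controlled by $d(v)$ and the neighbour degrees alone: by the distance analysis above, $p_v=\sum_{\mathrm{dist}(w,v)=2}\binom{d(w)-1}{2}+\sum_{\mathrm{dist}(w,v)\ge 3}\binom{d(w)}{2}$, so it depends on the degrees of vertices two or more steps from $v$. The crux is therefore to bound $p_v-p_{x^*}$ against the now \emph{cubic} binomial savings gained on passing to $x^*$, and to show that the latter always wins. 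I would carry this out by expanding all three terms over the degree sequence of $T$ and exploiting the tree constraints $\sum_w d(w)=2(n-1)$ together with the fact that, in a tree, the second neighbourhood and the set of high-degree vertices are tightly restricted. Organising this case analysis, rather than any single inequality, is the main obstacle, with the $r\le 3$ computations serving as its template.
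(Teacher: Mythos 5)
Your reduction $|\mathcal{J}^{(r)}_v(T)|=i_{r-1}(T-N[v])$ and the closed forms you use are all correct: for triangle-free $H$ one indeed has $i_2(H)=\binom{|V(H)|}{2}-|E(H)|$ and $i_3(H)=\binom{|V(H)|}{3}-|E(H)|(|V(H)|-2)+p(H)$, and your tree-specific evaluations check out ($|E(T-N[v])|=(n-1)-S(v)$ since no second neighbor can see two neighbors of $v$ without creating a $4$-cycle, and your localization of $p_v$ over vertices at distance $2$ versus distance $\geq 3$ is right). Your $r=3$ argument is complete and correct: the inequality $\binom{n-2}{2}-\binom{n-1-d}{2}=(d-1)(2n-4-d)/2\geq n-3$ holds on $2\leq d\leq n-1$ (the quadratic is concave with value $n-3$ at $d=2$ and $(n-2)(n-3)/2$ at $d=n-1$), while $S(v)-D\leq (n-1)-2=n-3$, and $d=1$ is handled since then $S(v)=d(\mathrm{support})\leq D$. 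This is a genuinely different route from the paper, which proves the theorem by an explicit injection from $\mathcal{J}^r_v(T)$ into a leaf-centered star (fixing a leaf $u$ with neighbor $w$, mapping $A\mapsto A$, $A\setminus\{v\}\cup\{u\}$, or a case-dependent replacement when $w\in A$, with injectivity certified by acyclicity). Your counting approach buys something the injection does not: it identifies \emph{which} leaf wins (one whose support has maximum degree) and quantifies the margin; the injection, by contrast, needs no global enumeration and extends to the delicate $r=4$ case with only local structure.

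The genuine gap is that you have not proved the $r=4$ case, which is the main content of the theorem. Your last paragraph sets up the correct identity $|\mathcal{J}^{(4)}_v(T)|=\binom{n-1-d}{3}-\bigl((n-1)-S(v)\bigr)(n-3-d)+p_v$ and correctly isolates the obstruction --- that $p_v$ depends on degrees of vertices at distance $\geq 2$ from $v$ and so is not controlled by $d(v)$ and $S(v)$ --- but then only announces an intention to ``expand over the degree sequence and organise the case analysis.'' No inequality bounding $p_v-p_{x^*}$ against the cubic gain is stated, let alone proved, and the choice of leaf is itself unresolved: the leaf maximizing the support degree, which sufficed for $r=3$, need not be the right comparator once $p_v$ enters (the paper's own Figure 1 example, where for $r=5$ an internal vertex beats some leaves, shows that the winning leaf must be chosen using structure beyond the first neighborhood, and even the paper's $r=4$ injection needs a special case when there is a leaf at distance two from $v$, plus the children-swap map $A\mapsto\{u,x_1,x_2,v_1\}$ with injectivity via a cycle argument). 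To close the gap you must either carry out that two-sided estimate --- lower-bounding $\binom{n-2}{3}-\binom{n-1-d}{3}$ and the change in the edge term against an upper bound on $p_v-p_{x^*}$, with a justified choice of $x^*$ --- or fall back on an injection-style case analysis for the configurations where the counting margin degenerates; as written, the proposal proves the theorem only for $r\leq 3$.
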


We will also prove that the ladder graph is $3$-EKR.
\begin{definition}
The ladder graph $L_n$ with $n$ rungs can be defined as the
cartesian product of $K_2$ and $P_n$.
\end{definition}
It is not hard to see that, for $L_n$, $\mu(L_n)\leq \lceil{\frac{n+1}{2}\rceil}$.
In fact, we show
that equality holds.
\begin{prop}
$$\mu(L_n)=\left\lceil\frac{n+1}{2}\right\rceil.
$$
\end{prop}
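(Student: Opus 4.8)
The plan is to prove the equality $\mu(L_n) = \lceil (n+1)/2 \rceil$ by establishing the matching lower bound, since the upper bound is already noted in the excerpt. Recall that $L_n = K_2 \times P_n$ has vertex set $\{a_1,\ldots,a_n\} \cup \{b_1,\ldots,b_n\}$, where $a_i b_i$ (the rungs) are edges, and $a_i a_{i+1}$ and $b_i b_{i+1}$ (the rails) are edges for $1 \le i \le n-1$. A maximal independent set is precisely a set $S$ that is independent and \emph{dominating} (equivalently, an independent dominating set, since $\mu(G) = i(G)$ as noted in the excerpt). So I must show that every maximal independent set $S$ in $L_n$ satisfies $|S| \ge \lceil (n+1)/2 \rceil$.

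**The key counting argument.**

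The hard part is the lower bound, and the natural approach is a covering/domination argument organized column by column. Index the $n$ columns (rungs) by $1,\ldots,n$. First I would observe that $S$ contains \emph{at most one} vertex from each column $\{a_i, b_i\}$, because the rung $a_i b_i$ is an edge; thus the columns occupied by $S$ number exactly $|S|$. Let $T \subseteq [n]$ be the set of occupied columns. The crux is to bound the number of \emph{empty} columns. I claim that $S$ cannot contain two vertices in consecutive columns $i, i+1$ that lie in the same row, for that would violate independence via a rail edge; and, more importantly, that the empty columns cannot be too numerous. Specifically, if column $i$ is empty (neither $a_i$ nor $b_i$ is in $S$), then by maximality/domination both $a_i$ and $b_i$ must each have a neighbor in $S$. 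Since $a_i$'s only neighbors are $b_i$ (empty), $a_{i-1}$, and $a_{i+1}$, one of the adjacent columns must supply an $a$-vertex; similarly for $b_i$. I would then argue that \emph{two consecutive columns cannot both be empty}: if columns $i$ and $i+1$ were both empty, then $a_i$ would need an $a$-neighbor at column $i-1$, and $a_{i+1}$ an $a$-neighbor at column $i+2$, and likewise for the $b$-row; tracking these forced vertices and checking the independence of the rails shows this configuration forces a conflict or fails to dominate. Hence empty columns are non-adjacent.

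**Completing the bound and handling the endpoints.**

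Since empty columns form an independent set in the path $P_n$ on $[n]$, the number of empty columns is at most $\lceil n/2 \rceil$, so $|S| = n - (\text{empty columns}) \ge n - \lceil n/2 \rceil = \lfloor n/2 \rfloor$. This bound is slightly too weak — it gives $\lfloor n/2 \rfloor$ rather than $\lceil (n+1)/2 \rceil = \lfloor n/2 \rfloor + 1$ — so I expect the main obstacle to be the $+1$ refinement. The extra unit should come from a careful analysis of the boundary columns $1$ and $n$ together with a parity argument: an empty \emph{end} column, say column $1$, is more expensive than an interior empty column because $a_1$ and $b_1$ each have only the single horizontal neighbor ($a_2$, resp.\ $b_2$) available to dominate them, forcing both $a_2, b_2 \in S$, which is impossible since column $2$ holds at most one vertex. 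Thus the end columns are \emph{never} empty, which tightens the packing of empty columns among the $n-2$ interior columns and yields the additional vertex. I would formalize this either by a direct case analysis on whether the end columns are occupied, or by exhibiting that each empty interior column can be charged to two occupied neighbors with no double-charging, giving $2(\text{empty}) \le (\text{occupied}) - c$ for a suitable boundary correction $c \ge 1$; solving this inequality against $|S| = n - (\text{empty})$ produces the claimed $\lceil (n+1)/2 \rceil$. Finally, I would verify the small cases $n=1,2$ by hand to anchor the parity.
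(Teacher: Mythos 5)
Your proposal is correct, and its engine is the same as the paper's: maximality (equivalently, independent domination) forces structure on the empty rungs. But the organization differs enough to be worth comparing. The paper argues by contradiction: assuming $\mu(L_n)<\lceil (n+1)/2\rceil$, it asserts that two \emph{consecutive} rungs are disjoint from a minimum maximal independent set $A$, and then derives a contradiction (with $u=x_i$, maximality forces $v=x_{i-1}\in A$, whence $y_{i-1}\notin A$ and $A\cup\{y_i\}$ is independent, contradicting maximality; the $i=1$ boundary is handled symmetrically via column $i+2$). You instead prove a direct packing bound: at most one vertex of $S$ per rung; no two consecutive empty rungs; no empty end rung; hence the empty rungs form a non-adjacent set among the $n-2$ interior rungs, giving $|S|\geq n-\lceil (n-2)/2\rceil = \lfloor n/2\rfloor +1 = \lceil (n+1)/2\rceil$, which is the right arithmetic. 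Two remarks. First, your sketch of the consecutive-rungs lemma is slightly misrouted: there is no need to track column $i+2$ or ``check independence of the rails'' --- if rungs $i$ and $i+1$ are both empty, domination of $a_i$ and $b_i$ forces \emph{both} $a_{i-1}$ and $b_{i-1}$ into $S$, an immediate conflict on the rung $a_{i-1}b_{i-1}$ (and $i=1$ is killed outright by your end-rung lemma); this is a one-line completion, so the gap is cosmetic. Second, your explicit end-rung lemma is not merely the ``$+1$ refinement'' --- it is precisely what the paper's terse first step needs: from $|A|\leq\lfloor n/2\rfloor$ alone one only gets $\lceil n/2\rceil$ empty rungs, and these \emph{can} be pairwise non-adjacent (e.g.\ all odd-indexed rungs), so ``there exist two consecutive empty rungs'' does not follow without first ruling out empty end rungs by your argument that an empty rung $1$ would force $a_2,b_2\in S$ simultaneously. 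So your version is slightly longer but self-contained, and it supplies the boundary analysis that the paper's contradiction leaves implicit.
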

\begin{proof}
The result is trivial if $n\leq 2$, so let $n\geq 3$.
Suppose $\mu(L_n)<\lceil{\frac{n+1}{2}\rceil}$ and let $A$ be a maximal independent set of size $\mu(L_n)$.
Then, there exist two consecutive rungs, say the $i^{th}$ and $(i+1)^{st}$ in $L_n$,
with endpoints $\{x_i,y_i\}$ and $\{x_{i+1},y_{i+1}\}$ respectively, such that
$\{x_i,y_i\}\cap A=\emptyset$ and $\{x_{i+1},y_{i+1}\}\cap A=\emptyset$.
Let $u=x_i$, $v=x_{i-1}$ and $w=y_i$ if $i>1$, otherwise, let $u=x_{i+1}$, $v=x_{i+2}$ and $w=y_{i+1}$.
$A\cup \{u\}$ is not independent, since $A$ is maximal.
Then, $v\in A$ and $A\cup \{w\}$ is independent, a contradiction.
\end{proof}
\begin{theorem}\label{ladder}
The graph $L_n$ is $3$-EKR for all $n\geq 1$.
\end{theorem}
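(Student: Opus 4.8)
The plan is to locate a maximum star, show it is centred at a corner, and then bound an arbitrary intersecting family by its size by induction on the number of rungs $n$. It is convenient to encode an independent set of $L_n$ as a word $w_1\cdots w_n$ over the alphabet $\{T,B,\ast\}$, where $w_i=T$ means the top vertex $x_i$ is used, $w_i=B$ means the bottom vertex $y_i$ is used, and $w_i=\ast$ means rung $i$ is unused. The rung edges $x_iy_i$ force at most one of $x_i,y_i$, while the rail edges translate exactly into the forbidden consecutive factors $TT$ and $BB$; an independent $3$-set is then a word with precisely three non-$\ast$ letters. This encoding converts every adjacency question into a pattern-avoidance question and keeps the ensuing case analysis finite and transparent. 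Since the $r$-EKR property only asks that no intersecting family exceed the largest star, it suffices to identify that largest star and then bound every intersecting $\mathcal{A}\subseteq\mathcal{J}^{(3)}(L_n)$ by its size.

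To find the largest star, note that $|\mathcal{J}_v^{(3)}(L_n)|=|\mathcal{J}^{(2)}(L_n-N[v])|=\binom{|V(L_n-N[v])|}{2}-|E(L_n-N[v])|$, so comparing stars reduces to comparing link graphs. A corner vertex has degree $2$, so its closed neighbourhood removes only three vertices, whereas every interior vertex removes four. A direct computation gives a corner star of size $s_n=\binom{2n-3}{2}-(3n-7)=2n^{2}-10n+13$, and a short routine comparison over the finitely many vertex types shows that no interior vertex yields a larger link. I would record this as a lemma: a maximum star of $L_n$ is attained at a corner. It then remains to prove $|\mathcal{A}|\le s_n$ for every intersecting family $\mathcal{A}$.

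For the bound I would induct on $n$, dispatching the small cases $n\le 5$ directly (here $\mathcal{J}^{(3)}(L_n)=\emptyset$ for $n\le 2$, and for $n=3$ the only independent $3$-sets are $\{x_1,y_2,x_3\}$ and $\{y_1,x_2,y_3\}$, which are disjoint, so the maximum intersecting family has size $1=s_3$). For the inductive step, split $\mathcal{A}=\mathcal{A}_{0}\cup\mathcal{A}_{1}$, where $\mathcal{A}_{1}$ is the set of members meeting the first rung (those with $w_1\in\{T,B\}$) and $\mathcal{A}_{0}$ the set of members with $w_1=\ast$. Each member of $\mathcal{A}_{0}$ is an independent $3$-set of the sub-ladder on rungs $2,\dots,n$, a copy of $L_{n-1}$, and $\mathcal{A}_{0}$ remains intersecting there, so the inductive hypothesis yields $|\mathcal{A}_{0}|\le s_{n-1}$. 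When $\mathcal{A}_{0}=\emptyset$ the whole family meets the corner rung; partitioning such $\mathcal{A}$ according to whether a member contains $x_1$ or $y_1$ and using that any $x_1$-set and $y_1$-set must intersect off the first rung reduces this case to a localized intersecting estimate whose extremum is the pure star $\mathcal{J}_{x_1}^{(3)}(L_n)$ of size $s_n$.

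The genuine difficulty, and the step I expect to be the main obstacle, is the case $\mathcal{A}_{0}\ne\emptyset$, where the cross-intersecting constraint between $\mathcal{A}_{0}$ and $\mathcal{A}_{1}$ must be exploited: fixing any $A_0\in\mathcal{A}_{0}$, every member of $\mathcal{A}_{1}$ must meet $A_0$, a set supported on rungs $\ge 2$. Unlike the path- and cycle-power graphs, $L_n$ contains the zig-zag independent sets $x_1,y_2,x_3,y_4,\dots$, which can avoid any prescribed rung, so a member of $\mathcal{A}_{0}$ may be supported far from the corner and intersecting is a real restriction rather than an automatic one; this is exactly what makes a naive peeling bound $s_{n-1}+|\mathcal{A}_1|$ insufficient. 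To control it I would (i) apply same-rung compressions pushing letters from $B$ to $T$ wherever this preserves both independence and the intersecting property, together with the top--bottom and left--right automorphisms of $L_n$, to reduce to a left-compressed family whose $\mathcal{A}_{1}$-members concentrate near the corner; and (ii) prove a Hilton--Milner-type cross-intersecting inequality for $L_n$ showing that the presence of even one set in $\mathcal{A}_{0}$ forces $|\mathcal{A}_{1}|$ down by at least $|\mathcal{A}_0|$ relative to $s_n-s_{n-1}$, so that $|\mathcal{A}_{0}|+|\mathcal{A}_{1}|\le s_n$. Because $r=3$, the links of the corner vertices are $2$-uniform, so at the innermost level this degenerates into a cross-intersecting problem for independent $2$-sets, where classical bounds such as Theorem \ref{ekrt} (and the $2$-EKR analysis of Theorem \ref{2ekr}) can be applied. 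Combining (i), (ii), and the $\mathcal{A}_0=\emptyset$ case completes the induction.
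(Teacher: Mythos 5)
Your preliminaries are sound --- the word encoding, the corner-star count $s_n=\binom{2n-3}{2}-(3n-7)=2n^2-10n+13$, the routine comparison with interior vertices, and the base cases are all correct --- but the inductive engine, which is the entire content of the theorem, is missing. Deleting the first rung splits $\mathcal{A}$ into $\mathcal{A}_0$ (bounded by $s_{n-1}$ by induction) and $\mathcal{A}_1$, but since the extremal star lies \emph{entirely} inside $\mathcal{A}_1$, no bound on $|\mathcal{A}_1|$ alone can close the induction: what you need is the trade-off $|\mathcal{A}_1|\le s_n-|\mathcal{A}_0|$ whenever $\mathcal{A}_0\neq\emptyset$, and this is precisely what you do not prove. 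Worse, the rigid version implicit in your phrasing --- that even one set in $\mathcal{A}_0$ forces $|\mathcal{A}_1|\le s_n-s_{n-1}=4n-12$ --- is false: for $n\ge 8$ take $\mathcal{A}_0=\{A_0\}$ with $A_0=\{y_3,x_5,y_7\}$ and $\mathcal{A}_1=\{A\in\mathcal{J}^{(3)}(L_n):x_1\in A,\ A\cap A_0\neq\emptyset\}$; this is intersecting, compatible with $\mathcal{A}_0$, and has size at least $3(2n-7)-6=6n-27>4n-12$. Read the other way, $|\mathcal{A}_1|\le s_n-|\mathcal{A}_0|$ is merely a restatement of the target inequality, so your Hilton--Milner-type lemma is the whole theorem in disguise. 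Your step (i) supplies no mechanism for it either: flipping a letter $B$ to $T$ can create a forbidden factor $TT$, so these compressions are only partially available, and you give no argument that they preserve the intersecting property or terminate in a canonical family to which step (ii) applies. The case $\mathcal{A}_0=\emptyset$ (two cross-intersecting stars at $x_1$ and $y_1$) is likewise asserted rather than proven.

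The paper sidesteps this trade-off entirely by working at the \emph{other} end of the ladder with a contraction rather than a deletion: the map $c$ sending $x_n\mapsto x_{n-1}$ and $y_n\mapsto y_{n-1}$ yields the exact decomposition $|\mathcal{A}|=|\mathcal{B}|+|\mathcal{E}|+|\mathcal{F}|+\sum_{i=3}^{6}|\mathcal{D}_i|$ (Equation \ref{eqlad} and Proposition \ref{plad}), in which $\mathcal{B}$ is intersecting in $\mathcal{J}^{3}(L_{n-1})$ and handled by induction, $\mathcal{E}$ and $\mathcal{F}$ are intersecting in $\mathcal{J}^{2}(L_{n-2})$ and handled by the $2$-EKR result (Theorem \ref{2ekr}), and the leftover families $\mathcal{D}_3,\dots,\mathcal{D}_6$ are shown by a finite case analysis to contribute at most $2$. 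Proposition \ref{p2lad} shows the corner star absorbs exactly these three contributions, via $|\mathcal{J}^3_{x_1}(L_n)|\ge|\mathcal{J}^3_{x_1}(L_{n-1})|+2|\mathcal{J}^2_{x_1}(L_{n-2})|+2$. The structural point is that in the contraction decomposition every piece is bounded by an already-available EKR statement and the error terms are uniformly bounded constants, whereas in your deletion decomposition the error term is a family of linear size governed by a cross-intersecting inequality you would have to prove from scratch. To salvage your outline you must either establish that trade-off lemma in full or switch to a contraction-style decomposition of this kind.
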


The rest of the paper is organized as follows: in Section \ref{s3},
we give a proof of Theorem \ref{mainthm}, in Section \ref{s4},
we give proofs of Theorems \ref{mthm} and \ref{mthm2}, and in Section \ref{s5},
we give proofs of Theorems \ref{ladder} and \ref{tstar}.
\section{Proof of Theorem \ref{mainthm}}\label{s3}
We begin by fixing some notation.
For a graph $G$ and a vertex $v\in V(G)$, let $G-v$ be the graph obtained from $G$ by removing vertex $v$.
Also, let $G\downarrow v$ denote the graph obtained by removing $v$ and its set of neighbors from $G$.
We note that if $G$ is a disjoint union of chordal graphs and if $v\in G$, the graphs $G-v$ and
$G\downarrow v$ are also disjoint unions of chordal graphs.

We state and prove a series of lemmas, which we will use in the proof of Theorem \ref{mainthm}.
\begin{lemma}\label{l1}
Let $G$ be a graph containing an isolated vertex $x$.
Then, for any vertex $v\in V(G)$, $|\mathcal{J}^r_v(G)|\leq |\mathcal{J}^r_x(G)|$.
\end{lemma}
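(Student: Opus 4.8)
The plan is to exhibit an explicit injection from $\mathcal{J}^r_v(G)$ into $\mathcal{J}^r_x(G)$, which immediately yields the claimed inequality. If $v=x$ the two stars coincide and there is nothing to prove, so I assume throughout that $v\neq x$. Since $x$ is isolated it is adjacent to no vertex; in particular $x\notin N(v)$, a fact that will be used repeatedly.

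First I would observe that every $A\in\mathcal{J}^r_v(G)$ is an independent set containing $v$, so $A$ is disjoint from $N(v)$. I then define a map $\Phi$ by cases according to whether $A$ already contains $x$. If $x\in A$, set $\Phi(A)=A$, which already lies in $\mathcal{J}^r_x(G)$. If $x\notin A$, set $\Phi(A)=(A\setminus\{v\})\cup\{x\}$; that is, swap the center $v$ out for the isolated vertex $x$. The key point is that $\Phi(A)$ is again an independent $r$-set: deleting $v$ cannot destroy independence, and adjoining $x$ cannot create an edge because $x$ is isolated. Since $x\in\Phi(A)$ and $|\Phi(A)|=r$ in both cases, $\Phi$ maps $\mathcal{J}^r_v(G)$ into $\mathcal{J}^r_x(G)$. (Equivalently, one may phrase this through the counting identities $|\mathcal{J}^r_x(G)|=|\mathcal{J}^{r-1}(G-x)|$ and $|\mathcal{J}^r_v(G)|=|\mathcal{J}^{r-1}(G\downarrow v)|$ and inject the latter family into the former, but the direct swap is cleaner.)

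The remaining work, and the step requiring the most care, is injectivity of this piecewise map. I would verify it by checking that the two cases have disjoint images: sets produced in the first case contain $v$, while sets produced in the second case omit $v$, since we removed $v$ and $v\neq x$ gives $v\notin(A\setminus\{v\})\cup\{x\}$. Hence it suffices to see that $\Phi$ is injective on each piece separately. On the first piece $\Phi$ is the identity. On the second piece, if $(A_1\setminus\{v\})\cup\{x\}=(A_2\setminus\{v\})\cup\{x\}$ with $v\in A_i$ and $x\notin A_i$, then removing $x$ and re-inserting $v$ recovers $A_1=A_2$. Thus $\Phi$ is injective and $|\mathcal{J}^r_v(G)|\leq|\mathcal{J}^r_x(G)|$. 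The only delicate point is the bookkeeping that keeps the two cases from colliding; everything else follows automatically because the isolation of $x$ makes the swap preserve both independence and cardinality.
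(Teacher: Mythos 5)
Your map $\Phi$ is exactly the injection used in the paper (fix $A$ when $x\in A$, otherwise swap $v$ for $x$), so your proposal is correct and takes essentially the same approach; the only difference is that you spell out the injectivity check that the paper dismisses as easy, and your verification (images in the first case contain $v$, those in the second do not) is right.
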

\begin{proof}
Let $v\in V(G)$, $v\neq x$.
We define a function $f:\mathcal{J}^r_v(G)\to \mathcal{J}^r_x(G)$ as follows.
\begin{displaymath}
   f(A) = \left \{
     \begin{array}{ll}
       A & \textrm{ if } x\in A \\
       A\setminus \{v\}\cup \{x\} & \textrm{ otherwise}
     \end{array}
   \right.
\end{displaymath}
It is easy to see that the function is injective, and this completes the proof.
\end{proof}
\begin{lemma}\label{l2}
Let $G$ be a graph, and let $v_1,v_2\in G$ be vertices such that $N[v_1]\subseteq N[v_2]$.
Then, the following inequalities hold:
\begin{enumerate}
\item $\mu(G-v_2)\geq \mu(G)$;
\item $\mu(G\downarrow v_2)+1\geq \mu(G)$.
\end{enumerate}
\end{lemma}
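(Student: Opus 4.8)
The plan is to prove both inequalities by the same device: starting from a minimum-size maximal independent set $S$ witnessing $\mu$ of the reduced graph, I reinterpret or minimally extend $S$ to a maximal independent set of $G$ whose size I can control, and then invoke the fact that $\mu(G)$ is the minimum of $|S'|$ over all maximal independent sets $S'$ of $G$. Throughout I assume $v_1\neq v_2$ (otherwise the hypothesis is vacuous), and I record the immediate consequence of $N[v_1]\subseteq N[v_2]$ that $v_1\in N(v_2)$, i.e. $v_1v_2\in E(G)$.

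For (2) I would take a maximal independent set $S$ of $G\downarrow v_2$ with $|S|=\mu(G\downarrow v_2)$. Since $V(G\downarrow v_2)=V(G)\setminus N[v_2]$, no vertex of $S$ is adjacent to $v_2$, so $S\cup\{v_2\}$ is independent in $G$. It is moreover maximal: any $w\notin S\cup\{v_2\}$ either lies in $N(v_2)$, and is then dominated by $v_2$, or lies in $G\downarrow v_2$, and is then dominated by $S$ through maximality of $S$ there. Hence $\mu(G)\leq|S|+1=\mu(G\downarrow v_2)+1$. Notably this half requires no hypothesis on $v_1$.

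For (1) I would take a maximal independent set $S$ of $G-v_2$ with $|S|=\mu(G-v_2)$ and argue that $S$ is already maximal in $G$, which gives $\mu(G)\leq|S|$. Being independent in the induced subgraph $G-v_2$, the set $S$ is independent in $G$, and every vertex of $V(G)\setminus(S\cup\{v_2\})$ is dominated by $S$ via maximality in $G-v_2$; so it remains only to show that $v_2$ itself has a neighbor in $S$. This is the crux of the lemma and the only place the hypothesis is used. Supposing instead that $N(v_2)\cap S=\emptyset$, I examine the status of $v_1$ in the maximal set $S$ of $G-v_2$: if $v_1\in S$, then $v_1\in N(v_2)\cap S$, a contradiction; and if $v_1\notin S$, maximality forces a neighbor $u\in S$ of $v_1$ with $u\neq v_2$, whence $u\in N(v_1)\subseteq N[v_2]$ together with $u\neq v_2$ gives $u\in N(v_2)\cap S$, again a contradiction. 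Either way $v_2$ is dominated by $S$, completing the argument.

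The main obstacle is precisely this last case analysis. Without the containment $N[v_1]\subseteq N[v_2]$ the inequality (1) can genuinely fail, since deleting a vertex may strictly lower the independent domination number; so the proof must extract from the hypothesis exactly the statement that $v_2$ cannot be freely added to a maximal independent set of $G-v_2$. I expect part (2) to present no difficulty.
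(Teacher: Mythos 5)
Your proposal is correct and takes essentially the same approach as the paper: both parts argue that a minimum maximal independent set of the reduced graph yields a maximal independent set of $G$ (unchanged for $G-v_2$, with $v_2$ adjoined for $G\downarrow v_2$), whence $\mu(G)$ is at most the corresponding size. Your case analysis on whether $v_1\in S$ is just the contrapositive of the paper's step that $N[v_2]\cap S=\emptyset$ would force $N[v_1]\cap S=\emptyset$ and hence make $S\cup\{v_1\}$ independent in $G-v_2$, contradicting maximality of $S$ there.
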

\begin{proof}
We begin by noting that the condition $N[v_1]\subseteq N[v_2]$ implies that $v_1v_2\in E(G)$.
\begin{enumerate}
\item We will show that if $I$ is a maximal independent set in $G-v_2$, then $I$ is maximally independent in $G$.
Suppose $I$ is not a maximal independent set in $G$.
Then, $I\cup \{v_2\}$ is an independent set in $G$.
Thus, for any $u\in N[v_2]$, $u\notin I$.
In particular, for any $u\in N[v_1]$, $u\notin I$.
Thus, $I\cup \{v_1\}$ is an independent set in $G-v_2$.
This is a contradiction. Thus, $I$ is a maximal independent set in $G$.

Taking $I$ to be the smallest maximal independent set in $G-v_2$, we get $\mu(G-v_2)=|I|\geq \mu(G)$.
\item We will show that if $I$ is a maximal independent set in $G\downarrow v_2$,
then $I\cup \{v_2\}$ is a maximal independent set in $G$.
Of course,
$I\cup \{v_2\}$ is independent, so suppose it is not maximal.
Then, for some vertex $u\in G\downarrow v_2$ and
$u\notin I\cup \{v_2\}$, $I\cup \{u,v_2\}$ is an independent set.
Thus, $I\cup \{u\}$ is an independent set in $G\downarrow v_2$, a contradiction.

Taking $I$ to be the smallest maximal independent set in $G\downarrow v_2$,
we get $\mu(G\downarrow v_2)+1=|I|+1\geq \mu(G)$.
\end{enumerate}
\end{proof}
\begin{cor}\label{cor1}
Let $G$ be a graph, and let $v_1,v_2\in G$ be vertices such that $N[v_1]\subseteq N[v_2]$.
Then, the following statements hold:
\begin{enumerate}
\item If $r\leq \frac{1}{2}\mu(G)$, then $r\leq \frac{1}{2}\mu(G-v_2)$;
\item If $r\leq \frac{1}{2}\mu(G)$, then $r-1\leq \frac{1}{2}\mu(G\downarrow v_2)$.
\end{enumerate}
\end{cor}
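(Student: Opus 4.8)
The plan is to derive both statements directly from Lemma \ref{l2}, since the hypothesis $N[v_1]\subseteq N[v_2]$ is identical in the two statements and the corollary is purely a matter of propagating the two structural inequalities $\mu(G-v_2)\geq \mu(G)$ and $\mu(G\downarrow v_2)\geq \mu(G)-1$ through the bound on $r$. No new combinatorics is required; the whole content lives in the lemma.

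For the first statement, I would simply chain inequalities. Assuming $r\leq \frac12\mu(G)$ and invoking part~1 of Lemma \ref{l2}, I obtain $\frac12\mu(G-v_2)\geq \frac12\mu(G)\geq r$, which is exactly the desired conclusion. No rounding issue arises because I only rescale a single inequality by the positive factor $\frac12$.

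For the second statement, I would clear denominators to avoid any confusion about halves. From $r\leq \frac12\mu(G)$ I get $2r\leq \mu(G)$, and part~2 of Lemma \ref{l2} gives $\mu(G\downarrow v_2)\geq \mu(G)-1\geq 2r-1\geq 2r-2=2(r-1)$; dividing by $2$ yields $r-1\leq \frac12\mu(G\downarrow v_2)$.

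There is essentially no obstacle here: the corollary is the routine arithmetic translation of Lemma \ref{l2} into statements about the constraint $r\leq \frac12\mu$. The only point worth a moment's care is the slack of $\frac12$ appearing in the second part — the reason the weaker target $r-1$ (rather than $r-\frac12$) is stated is precisely so that the integer inequality $2r-1\geq 2r-2$ absorbs it cleanly, which is why I would phrase the argument in terms of doubled quantities rather than fractions.
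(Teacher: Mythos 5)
Your proposal is correct and follows essentially the same route as the paper: both parts are obtained by directly propagating the inequalities $\mu(G-v_2)\geq \mu(G)$ and $\mu(G\downarrow v_2)+1\geq \mu(G)$ from Lemma \ref{l2} through the hypothesis $r\leq \frac{1}{2}\mu(G)$. Your doubled-quantity phrasing $2(r-1)=2r-2\leq 2r-1\leq \mu(G\downarrow v_2)$ is just an integer-cleared rewriting of the paper's chain $r-1\leq \frac{\mu(G)-2}{2}\leq \frac{\mu(G\downarrow v_2)}{2}-\frac{1}{2}$, absorbing the same half-unit of slack.
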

\begin{proof}
\begin{enumerate}
\item This follows trivially from the first part of Lemma \ref{l2}.
\item To prove this part, we use the second part of Lemma \ref{l2} to show
$$r-1\leq \frac{1}{2}\mu(G)-1=\frac{\mu(G)-2}{2}\leq \frac{\mu(G\downarrow v_2)}{2}-\frac{1}{2}.$$
\end{enumerate}
\end{proof}
Let $H$ be a component of $G$, so $H$ is
a chordal graph on $m$ vertices, $m\geq 2$.
Let $\{v_1,\ldots,v_m\}$ be a simplicial elimination ordering of $H$ and let $v_1v_i\in E(H)$ for some $i\geq 2$.
Let $\mathcal{A}\subseteq \mathcal{J}^r(G)$ be an intersecting family.
We define a compression operation $f_{1,i}$ for the family $\mathcal{A}$.
Before we give the definition, we note that if $A$ is an independent set and if $v_i\in A$,
then $A\setminus \{v_i\}\cup \{v_1\}$ is also independent.
\begin{displaymath}
   f_{1,i}(A) = \left \{
     \begin{array}{ll}
       A\setminus \{v_i\}\cup \{v_1\} & \textrm{ if } v_i\in A, v_1\notin A, A\setminus \{v_i\}\cup \{v_1\}\notin \mathcal{A}  \\
       A & \textrm{ otherwise}
     \end{array}
   \right.
\end{displaymath}
Then, we define the family $\mathcal{A}'$ by
$$\mathcal{A}'=f_{1,i}(\mathcal{A})=\{f_{1,i}(A):A\in \mathcal{A}\}.$$

It is not hard to see that $|\mathcal{A}'|=|\mathcal{A}|$. Next, we define the families

$$\mathcal{A}'_{i}=\{A\in \mathcal{A}':v_i\in A\},$$
$$\bar{\mathcal{A}'_{i}}=\mathcal{A}'\setminus \mathcal{A}'_{i},\ {\rm and}$$
$$\mathcal{B}'=\{A\setminus \{v_i\}:A\in \mathcal{A}'_{i}\}.$$

Then we have
\begin{eqnarray}
|\mathcal{A}| &=& |\mathcal{A}'| \nonumber \\
&=& |\mathcal{A}'_{i}|+|\bar{\mathcal{A}'_{i}}| \nonumber \\
&=& |\mathcal{B}'|+|\bar{\mathcal{A}'_{i}}|. \label{eq1}
\end{eqnarray}
 We prove the following lemma about these families.
\begin{lemma}\label{l3}
\begin{enumerate}
\item $\bar{\mathcal{A}'_{i}}\subseteq \mathcal{J}^r(G-v_i)$.
\item $\mathcal{B}'\subseteq \mathcal{J}^{(r-1)}(G\downarrow v_i)$.
\item $\bar{\mathcal{A}'_{i}}$ is intersecting.
\item $\mathcal{B}'$ is intersecting.
\end{enumerate}
\end{lemma}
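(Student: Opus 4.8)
The plan is to verify each of the four claims directly from the definitions, recalling that $v_1,v_i$ play the roles of $v_1,v_2$ from Lemma \ref{l2} since $v_1$ is simplicial in $H$ and $v_1v_i\in E(H)$ means every neighbor of $v_1$ lies in the clique $N[v_1]$, which one checks is contained in $N[v_i]$; thus $N[v_1]\subseteq N[v_i]$ and the machinery of Corollary \ref{cor1} will apply in the overall induction. For the first claim, any $A\in\bar{\mathcal{A}'_i}$ by definition does not contain $v_i$, and removing $v_i$ from $G$ cannot destroy independence, so $A$ is an independent $r$-set of $G-v_i$. For the second claim, if $A\in\mathcal{A}'_i$ then $v_i\in A$, so $A\setminus\{v_i\}$ has size $r-1$; moreover $A$ being independent forces all other elements of $A$ to avoid $N[v_i]$, so $A\setminus\{v_i\}$ is an independent set living entirely in $G\downarrow v_i$. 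Both of these are essentially bookkeeping.

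The real content is in the two intersecting claims, and I expect claim (3) to be the main obstacle. Claim (4) should be comparatively easy: if $A\setminus\{v_i\}$ and $B\setminus\{v_i\}$ lie in $\mathcal{B}'$ with $A,B\in\mathcal{A}'_i$, then since $\mathcal{A}'$ is intersecting (compression preserves the intersecting property — a standard fact I would either cite or quickly re-verify) we have $A\cap B\neq\emptyset$; but both contain $v_i$, so the nontrivial point is to produce a common element other than $v_i$. Here I would argue that if $v_i$ were the unique common element, then replacing $v_i$ by $v_1$ in one of the sets would create a member of $\mathcal{A}'$ disjoint from the other, contradicting that $\mathcal{A}'$ is intersecting — so in fact $(A\setminus\{v_i\})\cap(B\setminus\{v_i\})\neq\emptyset$.

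For claim (3), the difficulty is that $\bar{\mathcal{A}'_i}$ is a subfamily of the \emph{compressed} family, and I must show two sets $A,B$ avoiding $v_i$ still intersect. If both $A,B$ already belonged to $\mathcal{A}$ before compression, intersection is immediate. The delicate case is when $A$ (say) was produced by the compression, i.e. $A=C\setminus\{v_i\}\cup\{v_1\}$ for some $C\in\mathcal{A}$ with $v_i\in C$, $v_1\notin C$. Then $A\cap B\supseteq\{v_1\}$ would suffice if $v_1\in B$, but in general I need to show that the swap from $v_i$ to $v_1$ does not break the intersection with $B$. The key structural fact to exploit is that $v_1$ is simplicial with $N[v_1]\subseteq N[v_i]$, so $v_1$ and $v_i$ are ``interchangeable'' from the standpoint of independence, and the definition of $f_{1,i}$ only compresses when the target set is not already present. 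I would handle this by showing that if $A$ and $B$ were disjoint, one could trace back to two disjoint sets in $\mathcal{A}'$ (or in $\mathcal{A}$), contradicting the intersecting hypothesis; the standard compression argument shows precisely that $f_{1,i}$ cannot create a disjoint pair where none existed. The careful case analysis over whether each of $A,B$ was fixed or moved by $f_{1,i}$, together with the observation that $v_1$ and $v_i$ cannot simultaneously appear in an independent set (since $v_1v_i\in E$), is where the proof requires the most attention.
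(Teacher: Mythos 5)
Your proposal is correct and takes essentially the same approach as the paper: parts (1)--(2) are definitional bookkeeping, part (3) is the standard fixed/moved case analysis (the only delicate case being $v_1\notin A$, $v_1\in B$, resolved by passing to the unmoved preimage $B\setminus\{v_1\}\cup\{v_i\}\in\mathcal{A}$ and noting $v_1,v_i\notin A$), and part (4) hinges on exactly the paper's key observation that $A\cup\{v_i\}\in\mathcal{A}'_{i}$ forces \emph{both} $A\cup\{v_i\}$ and $A\cup\{v_1\}$ to lie in $\mathcal{A}$, with the adjacency $v_1v_i\in E$ ruling out $v_1$ or $v_i$ as the common element. The paper states (4) directly by intersecting $A\cup\{v_1\}$ with $B\cup\{v_i\}$, whereas you argue by contradiction via the intersecting property of the full compressed family $\mathcal{A}'$, but the underlying computation is identical.
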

\begin{proof}
It follows
from the definitions of the families that $\bar{\mathcal{A}'_{i}}\subseteq \mathcal{J}^r(G-{v_i})$
and $\mathcal{B}'\subseteq \mathcal{J}^{(r-1)}(G\downarrow v_i)$.
So, we only prove that the two families are intersecting.
Consider $A,B\in \bar{\mathcal{A}'_{i}}$.
If $v_1\in A$ and $v_1\in B$, we are done.
If $v_1\notin A$ and $v_1\notin B$, then $A,B\in \mathcal{A}$ and hence $A\cap B\neq \emptyset$.
So, suppose $v_1\notin A$ and $v_1\in B$.
Then, $A\in \mathcal{A}$.
Also, either $B\in \mathcal{A}$, in which case we are done or $B_1=B\setminus \{v_1\}\cup \{v_i\}\in \mathcal{A}$.
Then, $|A\cap B|=|A\cap B\setminus \{v_1\}\cup \{v_i\}|=|A\cap B_1|>0$.

Finally, consider $A,B\in \mathcal{B}'$.
Since $A\cup \{v_i\}\in \mathcal{A}'_{v_i}$, $A\cup \{v_1\}\in \mathcal{A}$ and $A\cup \{v_i\}\in \mathcal{A}$.
A similar argument works for $B$.
Thus, $|(A\cup \{v_1\})\cap (B\cup \{v_i\})|>0$ and hence, $|A\cap B|>0$.
\end{proof}
The final lemma we prove is regarding the star family $\mathcal{J}^r_x(G)$, where $x$ is an isolated vertex.
\begin{lemma}\label{l4}
Let $G$ be a graph containing an isolated vertex $x$ and let $v\in V(G)$, $v\neq x$. Then, we have
$$|\mathcal{J}^r_x(G)|=|\mathcal{J}^r_x(G-v)|+|\mathcal{J}^{(r-1)}_x(G\downarrow v)|.$$
\end{lemma}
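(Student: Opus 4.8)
The plan is to partition the star $\mathcal{J}^r_x(G)$ into two disjoint parts according to whether a member contains the vertex $v$, and then to identify each part with one of the two families on the right-hand side. Writing
$$|\mathcal{J}^r_x(G)|=|\{A\in \mathcal{J}^r_x(G):v\notin A\}|+|\{A\in \mathcal{J}^r_x(G):v\in A\}|,$$
I would show that the first summand equals $|\mathcal{J}^r_x(G-v)|$ and the second equals $|\mathcal{J}^{(r-1)}_x(G\downarrow v)|$.

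For the sets avoiding $v$, if $A\in \mathcal{J}^r_x(G)$ with $v\notin A$, then $A\subseteq V(G)\setminus\{v\}=V(G-v)$, and since $G-v$ is an induced subgraph, $A$ is independent in $G$ if and only if it is independent in $G-v$. Because $x$ remains an isolated vertex of $G-v$, these sets are precisely the members of $\mathcal{J}^r_x(G-v)$. For the sets containing $v$, if $A\in \mathcal{J}^r_x(G)$ with $v\in A$, then independence forces $A\cap N(v)=\emptyset$, so $A\setminus\{v\}\subseteq V(G)\setminus N[v]=V(G\downarrow v)$. The map $A\mapsto A\setminus\{v\}$ is then a bijection onto $\mathcal{J}^{(r-1)}_x(G\downarrow v)$: its inverse sends an independent $(r-1)$-set $B$ of $G\downarrow v$ containing $x$ to $B\cup\{v\}$, which is independent in $G$ precisely because $B$ avoids every neighbor of $v$. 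Adding the two contributions yields the claimed identity.

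The only place where the hypothesis that $x$ is isolated is used is in checking that $x$ is still a legitimate star center on both reduced graphs, and in particular that $x\in V(G\downarrow v)$. Since $x$ is isolated and $v\neq x$, the vertex $x$ is not adjacent to $v$, so $x\notin N[v]$ and $x$ survives the deletion defining $G\downarrow v$; it likewise survives in $G-v$. This is the sole (and entirely routine) obstacle, and once it is verified the result follows from the direct counting argument above.
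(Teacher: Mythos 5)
Your proposal is correct and follows essentially the same route as the paper: partition $\mathcal{J}^r_x(G)$ according to whether a set contains $v$, identify the $v$-avoiding part with $\mathcal{J}^r_x(G-v)$, and biject the $v$-containing part with $\mathcal{J}^{(r-1)}_x(G\downarrow v)$ via $A\mapsto A\setminus\{v\}$. Your explicit check that $x\notin N[v]$ (so that $x$ survives in both reduced graphs) is a detail the paper leaves implicit, and it is a welcome addition.
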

\begin{proof}
Partition the family $\mathcal{J}^r_x(G)$ into two parts.
Let the first part contain all sets containing $v$, say $\mathcal{F}_v$,
and let the second part contain all sets which do not contain $v$, say $\bar{\mathcal{F}_v}$.
Then

$\mathcal{F}_v=\mathcal{J}^{(r-1)}_x(G\downarrow v)$ and $\bar{\mathcal{F}_v}=\mathcal{J}^r_x(G-v)$.
\end{proof}
We proceed to a proof of Theorem \ref{mainthm}.
\subsection*{Proof of Theorem \ref{mainthm}}
\begin{proof}
The theorem trivially
holds for $r=1$, so suppose $r\geq 2$.
Let $G$ be a disjoint union of chordal graphs, including at least one singleton, and let $\mu(G)\geq 2r$.
We do induction on $|G|$.
If $|G|=\mu(G)$, then $G=E_{|G|}$, and we are done by the Erd\"{o}s-Ko-Rado theorem.
So, suppose $|G|>\mu(G)$, and there is one component, say $H$, which is a chordal graph having $m$ vertices, $m\geq 2$.
Let $\{v_1,\ldots,v_m\}$ be a simplicial ordering of $H$ and suppose $v_1v_i\in E(H)$ for some $i\geq 2$.
Since
the neighborhood of $v_1$ is a clique, we have $N[v_1]\subseteq N[v_i]$.
Also, let $x$  be an isolated vertex in $G$.
Let $\mathcal{A}\subseteq \mathcal{J}^r(G)$ be intersecting.

Define the compression operation $f_{1,i}$ and the families $\bar{\mathcal{A}'_{i}}$
and $\mathcal{B}'$ as before.
Using Equation \ref{eq1}, Lemmas \ref{l1}, \ref{l2}, \ref{l3}, \ref{l4},
Corollary \ref{cor1} and the induction hypothesis, we have
\begin{eqnarray}
|\mathcal{A}|&=&|\bar{\mathcal{A}'_{i}}|+|\mathcal{B}'| \nonumber \\
&\leq & |\mathcal{J}^r_x(G-v_i)|+|\mathcal{J}^{(r-1)}_x(G\downarrow v_i)|  \nonumber \\
&=& |\mathcal{J}^r_x(G)|. \label{eq2}
\end{eqnarray}
\end{proof}
\section{Proofs of Theorems \ref{mthm} and \ref{mthm2}}\label{s4}
The main technique we use to prove Theorem \ref{mthm} is a compression operation
that is equivalent to compressing a clique to a single vertex.
In a sense, it is a more general version of the technique used in \cite{hst}.
We begin by stating and proving a technical lemma, similar to the one proved in \cite{hst}.
We will then use it to prove Theorem \ref{mthm} by induction.
\subsection{A technical lemma}

Let $H\subseteq G$ with $V(H)=\{v_1,\ldots,v_s\}$. Let $G/H$ be the
graph obtained by contracting the subgraph $H$ to a single vertex.
The contraction function $c$ is defined as follows.
\begin{displaymath}
   c(x) = \left \{
     \begin{array}{ll}
       v_1 & : x \in H\\
       x & : x \notin H
     \end{array}
   \right.
\end{displaymath}
When we contract $H$ to $v_1$, the edges which have both endpoints in $H$ are lost
and if there is an edge $xv_i\in E(G)$ such that $x\in V(G)\setminus V(H)$,
then there is an edge $xv_1\in E(G/H)$. Duplicate edges are disregarded.

Also, let $G-H$ be the (possibly disconnected) graph obtained from
$G$ by removing all vertices in $H$.

\begin{lemma}\label{tl1}
Let $G=(V,E)$ be a graph and let $\mathcal{A}\subseteq
\mathcal{J}^r(G)$ be an intersecting family of maximum size. If
$H$ is a subgraph of $G$ with vertex set $\{v_1,\ldots,v_s\}$, and if $H$ is isomorphic to $K_s$,
then there exist families $\mathcal{B}$,
$\{\mathcal{C}_i\}_{i=2}^s$, $\{\mathcal{D}_i\}_{i=2}^s$,
$\{\mathcal{E}_i\}_{i=2}^s$ satisfying:
\begin{enumerate}
\item
$|\mathcal{A}|=|\mathcal{B}|+\sum_{i=2}^s|\mathcal{C}_i|+|\bigcup_{i=2}
^s\mathcal{D}_i| +\sum_{i=2}^s|\mathcal{E}_i|$;
\item $\mathcal{B}\subseteq \mathcal{J}^r(G/H)$ is intersecting; and
\item for each $2\leq i\leq s$,
\begin{enumerate}
\item $\mathcal{C}_i\subseteq \mathcal{J}^{r-1}(G-H)$ is
intersecting,
\item $\mathcal{D}_i=\{A\in \mathcal{A}:v_1\in A \textrm{ and } N(v_i)\cap
(A\setminus \{v_1\})\neq \emptyset\}$, and
\item $\mathcal{E}_i=\{A\in \mathcal{A}:v_i\in A \textrm{ and } N(v_1)\cap
(A\setminus \{v_i\})\neq \emptyset\}$.
\end{enumerate}
\end{enumerate}
\end{lemma}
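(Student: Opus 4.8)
The plan is to exploit that $H\cong K_s$ is a clique, so every independent set of $G$ meets $V(H)$ in at most one vertex. First I would partition $\mathcal{A}$ into the subfamilies $\mathcal{A}_0=\{A\in\mathcal{A}:A\cap V(H)=\emptyset\}$, $\mathcal{A}_1=\{A\in\mathcal{A}:v_1\in A\}$, and $\mathcal{A}_i=\{A\in\mathcal{A}:v_i\in A\}$ for $2\le i\le s$; these are pairwise disjoint and exhaust $\mathcal{A}$, so $|\mathcal{A}|=\sum_{i=0}^s|\mathcal{A}_i|$. I would then work with the contraction map $c$ that collapses $H$ to $v_1$, noting that for $A\in\mathcal{A}_i$ the image is $c(A)=(A\setminus\{v_i\})\cup\{v_1\}$, while $c$ fixes the sets in $\mathcal{A}_0\cup\mathcal{A}_1$.

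Next I would define $\mathcal{B}$ to be the family of those images $c(A)$ that are independent in $G/H$, regarded as a set (so duplicates collapse). That $\mathcal{B}\subseteq\mathcal{J}^r(G/H)$ is immediate, and $\mathcal{B}$ is intersecting because $c$ preserves intersections: if $A\cap B\neq\emptyset$ then either they share a vertex outside $V(H)$, which survives in $c(A)\cap c(B)$, or they share the unique clique vertex, which $c$ sends to the common element $v_1$. The families $\mathcal{D}_i$ and $\mathcal{E}_i$ are exactly the obstructions to landing in $\mathcal{B}$: a routine check shows that $A\in\mathcal{A}_1$ has $c(A)=A$ independent in $G/H$ iff $N(v_j)\cap(A\setminus\{v_1\})=\emptyset$ for every $j\ge 2$, i.e. iff $A\notin\bigcup_{j}\mathcal{D}_j$; and for $A\in\mathcal{A}_i$ the shift $c(A)$ fails to be independent in $G$ precisely when $A\in\mathcal{E}_i$. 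The family $\mathcal{C}_i$ will collect the leftover members of $\mathcal{A}_i$ — those whose shift collides with a set already in $\mathcal{A}$ — recorded in $G-H$ by deleting $v_i$; since such an $A$ satisfies $A\setminus\{v_i\}\subseteq V(G)\setminus V(H)$, indeed $\mathcal{C}_i\subseteq\mathcal{J}^{r-1}(G-H)$.

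The counting identity in part~(1) would then follow by bookkeeping the images of $c$: each $A\in\mathcal{A}_0\cup\mathcal{A}_1^{\mathrm{good}}$, where $\mathcal{A}_1^{\mathrm{good}}=\mathcal{A}_1\setminus\bigcup_j\mathcal{D}_j$, contributes a distinct member of $\mathcal{B}$, while each $A\in\mathcal{A}_i$ either contributes a new member of $\mathcal{B}$, lands in $\mathcal{C}_i$ (collision), or lands in $\mathcal{E}_i$; once these contributions are shown disjoint, summing yields $|\mathcal{A}|=|\mathcal{B}|+\sum_i|\mathcal{C}_i|+|\bigcup_i\mathcal{D}_i|+\sum_i|\mathcal{E}_i|$. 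For the intersecting property of $\mathcal{C}_i$ I would mimic the argument of Lemma~\ref{l3}(4): a set placed in $\mathcal{C}_i$ arises from an $A$ with both $(A\setminus\{v_i\})\cup\{v_i\}\in\mathcal{A}$ and $(A\setminus\{v_i\})\cup\{v_1\}\in\mathcal{A}$, so for two such sets $A,B$ the sets $(A\setminus\{v_i\})\cup\{v_1\}$ and $B$ both lie in $\mathcal{A}$ and therefore meet; their common vertex can be neither $v_1$ nor $v_i$, hence it lies in $(A\setminus\{v_i\})\cap(B\setminus\{v_i\})$, as required.

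I expect the main obstacle to be making the bookkeeping of part~(1) exact. The delicate points are (i) guaranteeing that the contributions to $\mathcal{B}$ from different $\mathcal{A}_i$ are genuinely disjoint, so no image is double counted, and (ii) handling the sets $A\in\mathcal{A}_i\setminus\mathcal{E}_i$ whose shift is independent in $G$ but not in $G/H$, or whose shift is independent yet absent from $\mathcal{A}$. Resolving (ii) is where the hypothesis that $\mathcal{A}$ has maximum size is essential: if $(A\setminus\{v_i\})\cup\{v_1\}$ is independent in $G$ but does not belong to $\mathcal{A}$, then maximality forces a witness $C\in\mathcal{A}$ disjoint from it, and comparing $A$ with $C$ shows $A\cap C=\{v_i\}$, which is exactly the structural information needed to route $A$ consistently into $\mathcal{C}_i$ or $\mathcal{B}$ while preserving both the count and the intersecting property. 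Pinning down this case analysis cleanly, rather than the individual verifications, is the real work.
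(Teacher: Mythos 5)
Your plan is structurally the paper's own proof: the same contraction $c$, the same four families ($\mathcal{B}$ as the independent images, $\mathcal{C}_i$ as the collision records, $\mathcal{D}_i,\mathcal{E}_i$ as prescribed), the same intersection arguments for $\mathcal{B}$ and $\mathcal{C}_i$, and your maximality witness is exactly the paper's Claim~\ref{cl}. One local correction first: maximality is not needed to ``route'' a single $A=A'\cup\{v_i\}$ whose shift $A'\cup\{v_1\}$ is independent but absent from $\mathcal{A}$ --- such an $A$ simply contributes $c(A)$ to $\mathcal{B}$ and is counted once. Where maximality bites is when two sets $A'\cup\{v_i\}$, $A'\cup\{v_j\}$ with $i,j\geq 2$ collide onto the same independent image: a witness $C\in\mathcal{A}$ disjoint from $A'\cup\{v_1\}$ would have to contain both $v_i$ and $v_j$, which are adjacent, a contradiction; hence $A'\cup\{v_1\}\in\mathcal{A}$, and the surplus preimages are recorded in the $\mathcal{C}_i$ exactly as defined. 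Your single-set conclusion $A\cap C=\{v_i\}$ is true but yields no contradiction and no consistent routing, so as written it does not do the job you assign to it.

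The genuine gap is the first half of your difficulty (ii), which you correctly flag but whose proposed resolution fails; it also contradicts your earlier assertion that $\mathcal{D}_i,\mathcal{E}_i$ are ``exactly the obstructions to landing in $\mathcal{B}$.'' Suppose $v_i\in A\in\mathcal{A}$ and $A'=A\setminus\{v_i\}$ contains a neighbor of some $v_j$, $j\neq 1,i$, outside $H$, but no neighbor of $v_1$. Then $c(A)\notin\mathcal{J}^r(G/H)$, so $A$ cannot feed $\mathcal{B}$; $A\notin\mathcal{E}_i$ and $A\notin\mathcal{D}_j$ (these families are pinned down by the statement of Lemma~\ref{tl1}); and $A'$ enters $\mathcal{C}_i$ only if $A'\cup\{v_1\}\in\mathcal{A}$, which maximality does not force --- and if you insert such $A'$ into $\mathcal{C}_i$ anyway, the intersecting proof collapses, since it relies on $A'\cup\{v_1\}\in\mathcal{A}$, and two such leftover sets may meet only in $v_i$. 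Concretely, let $H$ be a triangle $v_1v_2v_3$, add a vertex $x$ adjacent only to $v_2$, and add $z_1,\dots,z_k$ adjacent to $v_1$, to $x$, and to each other: for $r=2$ and $k\geq 3$ the unique maximum intersecting family is the star of $v_3$, of size $k+1$, yet $\mathcal{B}$, every $\mathcal{C}_i$ (as canonically defined) and every $\mathcal{D}_i$ is empty while $\sum_i|\mathcal{E}_i|=k$: the set $\{x,v_3\}$ is counted nowhere. You are in good company: the paper's proof asserts that $c(A)\notin\mathcal{J}^r(G/H)$ iff $A\in\bigcup_i\mathcal{D}_i\cup\bigcup_i\mathcal{E}_i$, which holds only when every neighbor of each $v_j$ outside $H$ is also a neighbor of $v_1$ outside $H$ --- true in the intended application (in a chain only the connecting vertex $v_1=n_1$ of $G_n$ has neighbors outside $H=G_n$, which is also why the $\mathcal{D}_i$ vanish there), but not for general $G$ and $H\cong K_s$. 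So your sketch reproduces the paper's argument together with its unstated hypothesis; a complete proof of the lemma as literally stated would have to dispose of these stray sets separately or build that neighborhood hypothesis into the statement.
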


To prove Lemma \ref{tl1}, we will need a claim, which we state and
prove below.

\begin{claim}\label{cl}
Let $H\subseteq G$ be isomorphic to $K_s$, $s\geq 3.$ Let
$\mathcal{A}\subseteq \mathcal{J}^r(G)$ be an intersecting family of
maximum size. Suppose $A\cup \{v_i\},A\cup \{v_j\}\in \mathcal{A}$
for some $i,j\neq 1$ and $c(A\cup \{v_i\})=A\cup \{v_1\}\in
\mathcal{J}^r(G/H).$ Then $A\cup \{v_1\}\in \mathcal{A}.$
\end{claim}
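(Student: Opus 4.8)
The plan is to argue by contradiction using the maximality of $\mathcal{A}$, after first recording two preliminary facts. Since $A\cup\{v_i\}$ is independent and contains $v_i\in V(H)$, and every pair of vertices of $H\cong K_s$ is adjacent, no other vertex of $H$ can lie in $A$; hence $A$ is disjoint from $V(H)$, which is implicitly what makes $c(A\cup\{v_i\})=A\cup\{v_1\}$. Next I would verify that $A\cup\{v_1\}$ is actually independent in $G$ and not merely in $G/H$. By the definition of the contraction, an edge $xv_1$ appears in $G/H$ (for $x\notin V(H)$) exactly when $x$ is adjacent in $G$ to some vertex of $H$. Therefore the hypothesis $A\cup\{v_1\}\in\mathcal{J}^r(G/H)$ says precisely that no vertex of $A$ is adjacent in $G$ to any vertex of $H$, and in particular none is adjacent to $v_1$. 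Combined with the independence of $A$ itself, this gives $A\cup\{v_1\}\in\mathcal{J}^r(G)$, so $A\cup\{v_1\}$ is a legitimate candidate for membership in $\mathcal{A}$.

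With these in hand, suppose for contradiction that $A\cup\{v_1\}\notin\mathcal{A}$. Because $\mathcal{A}$ is an intersecting family of maximum size and $A\cup\{v_1\}$ is an independent $r$-set not already in $\mathcal{A}$, the family $\mathcal{A}\cup\{A\cup\{v_1\}\}$ cannot be intersecting; otherwise it would be a strictly larger intersecting family. Hence there exists $B\in\mathcal{A}$ with $B\cap(A\cup\{v_1\})=\emptyset$, so that $B\cap A=\emptyset$ and $v_1\notin B$. Since $A\cup\{v_i\}\in\mathcal{A}$ and $A\cup\{v_j\}\in\mathcal{A}$, the set $B$ must meet each of them; as $B$ avoids all of $A$, this forces $v_i\in B$ and $v_j\in B$.

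The contradiction is now immediate: with $i\neq j$ (both distinct from $1$), the vertices $v_i,v_j$ are distinct members of $H\cong K_s$, so $v_iv_j\in E(H)\subseteq E(G)$, which contradicts the independence of $B$. Thus $A\cup\{v_1\}\in\mathcal{A}$, as claimed. I do not expect any step here to be a serious obstacle; the only point requiring care is the preliminary verification that $A\cup\{v_1\}$ is independent in $G$ (so that maximality of $\mathcal{A}$ can be invoked at all), and the role of the hypothesis $s\geq 3$, which is exactly what supplies two \emph{distinct} witnesses $v_i,v_j$ whose forced presence in $B$ produces the offending edge.
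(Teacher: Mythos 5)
Your proof is correct and follows essentially the same route as the paper's: verify that $A\cup\{v_1\}$ is independent in $G$, then use maximality of $\mathcal{A}$ to produce a set $B\in\mathcal{A}$ disjoint from $A\cup\{v_1\}$, which is forced to contain both $v_i$ and $v_j$, contradicting $v_iv_j\in E(G)$. Your added care in checking independence of $A\cup\{v_1\}$ in $G$ (not just $G/H$) and in flagging where $s\geq 3$ enters is a welcome elaboration of what the paper states tersely, but it is not a different argument.
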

\begin{proof}
Since we have $c(A\cup \{v_i\}) \in \mathcal{J}^r(G/H)$, $B=A\cup
\{v_1\}\in \mathcal{J}^r(G).$ Suppose $B\notin \mathcal{A}$. Since
$\mathcal{A}$ is an intersecting family of maximum size,
$\mathcal{A}\cup \{B\}$ is not an intersecting family. So, there
exists a $C\in \mathcal{A}$ such that $B\cap C=\emptyset.$ So, we
have $C\cap (A\cup \{v_i\})=v_i$ and $C\cap (A\cup \{v_j\})=v_j$.
Thus, $v_i,v_j\in C.$ This is a contradiction since $v_i$ and $v_j$
are adjacent to each other.
\end{proof}

\begin{proof}{(Proof of Lemma \ref{tl1})}
Define the following families:
\begin{enumerate}
\item $\mathcal{B}=\{c(A):A\in \mathcal{A} \textrm{ and } c(A)\in
\mathcal{J}^r(G/H)\}$; and
\item for each $2\leq i\leq s$:
\begin{enumerate}
\item $\mathcal{C}_i=\{A\setminus \{v_1\}:v_1\in A \textrm{ and }
A\setminus \{v_1\}\cup \{v_i\}\in \mathcal{A}\}$,
\item $\mathcal{D}_i=\{A\in \mathcal{A}:v_1\in A \textrm{ and } N(v_i)\cap
(A\setminus \{v_1\})\neq \emptyset\}$, and
\item $\mathcal{E}_i=\{A\in \mathcal{A}:v_i\in A \textrm{ and } N(v_1)\cap
(A\setminus \{v_i\})\neq \emptyset\}$.
\end{enumerate}
\end{enumerate}

If $A,B\in \mathcal{A}$ and $A\neq B$, then $c(A)=c(B)$ iff
$A\bigtriangleup B=\{v_i,v_j\}$ for some $1\leq i,j\leq s.$ Using
this and Claim \ref{cl} (if $s\geq 3$), we have
$$|\{A\in \mathcal{A}:c(A)\in
\mathcal{J}^r(G/H)\}|=|\mathcal{B}|+\sum_{i=2}^s|\mathcal{C}_i|.$$

Also, if $A\in \mathcal{A}$, then $c(A)\notin \mathcal{J}^r(G/H)$
iff $A\in \bigcup_{i=2}^s\mathcal{D}_i \cup
\bigcup_{i=2}^s\mathcal{E}_i.$ Thus, we have
$|\mathcal{A}|=|\mathcal{B}|+\sum_{i=2}^s|\mathcal{C}_i|+|\bigcup_{i=2}
^s\mathcal{D}_i| +|\bigcup_{i=2}^s\mathcal{E}_i|$. By the definition
of the $\mathcal{E}_i$'s, $\bigcup_{i=2}^s\mathcal{E}_i$ is a
disjoint union, so we have
$$|\mathcal{A}|=|\mathcal{B}|+\sum_{i=2}^s|\mathcal{C}_i|+|\bigcup_{i=2}
^s\mathcal{D}_i| +\sum_{i=2}^s|\mathcal{E}_i|$$

It is obvious to show that $\mathcal{B}$ is intersecting since
$\mathcal{A}$ is.

Let $2\leq i\leq s.$ To see that $\mathcal{C}_i$ is intersecting,
suppose $C,D\in \mathcal{C}_i$ and $C\cap D=\emptyset$. But $C\cup
\{v_1\}$ and $D\cup \{v_i\}$ are in $\mathcal{A}$ and hence, are
intersecting. This is a contradiction.
\end{proof}

\subsection{Proof of Theorem \ref{mthm}} \label{main}

Before we move to the proof of Theorem \ref{mthm}, we will prove one
final claim regarding maximum sized star families in $G$.

\begin{claim}\label{clmstr}
If $G$ is \textit{special} chain of length $n$, then a maximum sized star is centered
at an internal vertex of $G_1$.
\end{claim}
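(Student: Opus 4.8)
The plan is to compare star sizes through the identity $|\mathcal{J}^r_v(G)| = |\mathcal{J}^{(r-1)}(G\downarrow v)|$, which holds because deleting $v$ from an independent $r$-set containing it yields an independent $(r-1)$-set of $G\downarrow v$, and conversely (this is the $\mathcal{F}_v$ half of Lemma \ref{l4}). The cases $n\in\{0,1\}$ are trivial (when $n=1$ every vertex of $G=G_1$ is ``internal''), so assume $n\ge 2$. First I would dispose of the connecting vertices. If $w$ is an internal (non-connecting) vertex of the link $G_i$ then $N[w]=V(G_i)$, whereas for the connecting vertex $v_i$ one has $N[v_i]=V(G_{i-1})\cup V(G_i)$, so $N[w]\subseteq N[v_i]$. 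Hence $G\downarrow v_i$ is an induced subgraph of $G\downarrow w$, every independent set of the former is one of the latter, and $|\mathcal{J}^r_{v_i}(G)|\le|\mathcal{J}^r_w(G)|$. Thus some maximum star is centered at an internal vertex, and it remains to compare internal vertices across $G_1,\dots,G_n$.

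Next I would fix internal vertices $w_1\in G_1$ and $w_j\in G_j$ for $j\ge 2$ and prove $|\mathcal{J}^r_{w_1}(G)|\ge|\mathcal{J}^r_{w_j}(G)|$, that is, $|\mathcal{J}^{(r-1)}(G-V(G_1))|\ge|\mathcal{J}^{(r-1)}(G-V(G_j))|$, where $G\downarrow w_k=G-V(G_k)$ since $N[w_k]=V(G_k)$. The graph $C:=G-V(G_1)$ is the connected chain with links $G_2-v_2,G_3,\dots,G_n$, while $G-V(G_j)$ splits as $L\sqcup R$ with $L=[G_1,\dots,G_{j-2},G_{j-1}-v_j]$ and $R=[G_{j+1}-v_{j+1},G_{j+2},\dots,G_n]$, with the obvious degenerations when $j=2$ or $j=n$. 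I would exhibit an isomorphism of $L\sqcup R$ onto an induced subgraph of $C$: embed $L$ (which has $j-1$ links) into the first $j-1$ links $G_2-v_2,\dots,G_j$ of $C$, link by link and connecting-vertex to connecting-vertex, and note that $R$ is already literally the induced subgraph of $C$ on $V(R)$ (its links $G_{j+1}-v_{j+1},\dots,G_n$ sit inside $G_{j+1},\dots,G_n$). Since such a map is injective and both preserves and reflects adjacency, it carries independent $(r-1)$-sets to independent $(r-1)$-sets, giving the inequality; chaining over $j$ shows an internal vertex of $G_1$ maximizes the star.

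The special hypothesis is exactly what powers the embedding of $L$. Matching the $k$-th link of $L$ with the $k$-th link of the prefix $G_2-v_2,\dots,G_j$ of $C$, the required size dominances are (for $j\ge 3$) $|G_1|\le|G_2|-1$, then $|G_k|\le|G_{k+1}|$ for $2\le k\le j-2$, and finally $|G_{j-1}|-1\le|G_j|$---precisely conditions (1) and (2) of specialness, the last inequality invoking $|G_n|\ge|G_{n-1}|$ when $j=n$. The step I expect to demand the most care is arranging that the two images land on disjoint, mutually non-adjacent vertex sets: the only edges of $C$ joining the prefix links to the remaining links pass through the single vertex $v_{j+1}$ shared by $G_j$ and $G_{j+1}$, so I must place the image of the last link of $L$ inside $G_j-v_{j+1}$---the slack $|G_j|-1\ge|G_{j-1}|-1$ again coming from specialness---so that neither image uses $v_{j+1}$. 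Then no edge of $C$ joins the two images, and the induced subgraph on their union is genuinely $L\sqcup R$. Carrying out this bookkeeping at the boundary links, where a connecting vertex has been deleted and the chain's junction pattern must still be matched exactly, is the main obstacle; the interior links are routine.
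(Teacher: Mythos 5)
Your proposal is correct, but it takes a genuinely different route from the paper's proof. The paper argues directly at the level of star families: after handling connecting vertices by the same center-replacement observation you make (stated there as a trivial injection, valid for exactly your reason $N[\mathrm{internal}]\subseteq N[\mathrm{connecting}]$), it fixes a bijection between the vertices of $G_1=K_m$ (all $m$ of them when $u\in G_i$ with $i\geq 3$; the $m-1$ internal ones when $u\in G_2$, since a set in the star at $u$ can never contain $v_2$) and that many internal vertices of $G_i$, and then maps each set $A$ in the star at $u$ by moving at most two vertices: the center $u$ goes to an internal vertex $v\in G_1$, and the unique vertex of $A$ in the clique $G_1$, if any, is sent to its partner in $G_i$. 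Specialness enters only in telescoped form, to guarantee $G_i$ has enough internal vertices (e.g.\ $|G_i|\geq |G_1|+i-1$ for $3\leq i\leq n-1$). You instead pass through the identity $|\mathcal{J}^r_v(G)|=|\mathcal{J}^{(r-1)}(G\downarrow v)|$ and prove the stronger structural fact that $G\downarrow w_j=L\sqcup R$ embeds as an induced subgraph of $G\downarrow w_1=C$ by shifting the entire left sub-chain one link to the right; there specialness is spent link-by-link exactly as stated, and your insistence that the image of the last link of $L$ avoid $v_{j+1}$ is precisely the point where a careless embedding would create an edge to $R$ and fail. What each buys: the paper's swap is shorter and purely local --- since an independent set meets the clique $G_1$ at most once, only one vertex besides the center ever needs to move, so no chain-isomorphism bookkeeping is required --- while your argument is more structural and self-certifying (an induced-subgraph embedding immediately yields the inequality on counts of independent sets, for every $r$ at once), and your treatment of connecting vertices via $N[w]\subseteq N[v_i]$ nicely parallels the hypothesis of Lemma \ref{l2} used elsewhere in the paper. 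Both arguments draw on the specialness inequalities in equivalent ways, and yours is complete as outlined, modulo the routine remark that every link of a special chain actually contains an internal vertex.
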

\begin{proof}
First note that for any $i$, there is a trivial injection from a
star centered at a connecting vertex of $G_i$ to a star centered at
an internal vertex of $G_i$, which replaces the star center by that
internal vertex in every set of the family. So suppose $\mathcal{Q}$
is a star centered at a internal vertex $u$
of any of the graphs
$G_i$, $i\neq 1$. Let $G_1=K_m$. Consider the following cases.
\begin{enumerate}
\item Suppose $u$ is in $G_2$.
In this case, define an arbitrary bijection between the $m-1$ internal
vertices of $G_1$ and any $m-1$ internal vertices of $G_2$
containing $u$, such that $u$ corresponds to an internal vertex of
$G_1$, say $v$
(note that this can always be done, since if $n=2$,
then $|G_2|\geq m$, with one connecting vertex, while if $n\geq 3$,
then $|G_2|\geq m+1$, with two connecting vertices).

\item Suppose $u$ is in some $G_i$ such that
$i\geq 3$. Then, define an arbitrary bijection between the $m$ vertices
of $G_1$ and any $m$ internal vertices of $G_i$ including $u$ such
that $u$ corresponds to an internal vertex of $G_1$, say $v$.

\end{enumerate}
Next, consider any set in $\mathcal{Q}$. If it contains a vertex $w$
in $G_1$, replace that vertex by $b$ and replace $u$ by the vertex
in $G_i$ corresponding to $w$. If it does not contain a vertex in
$G_1$, replace $u$ by $v$. This defines the injection from
$\mathcal{Q}$ to a star centered at $v$.
\end{proof}

We now give a proof of Theorem \ref{mthm}.
\begin{proof}
Let $\mathcal{J}^r_1(G)$ be a maximum sized star family in $G$,
where $1$ is an internal vertex of $G_1$.

We do induction on $r$. The result is trivial for $r=1$. Let $r\geq
2$. We do induction on $n$ ($n$ is the number of links).
For $n=1$, result is vacuously true. If $n=2$, then for $r=2$, we
use Theorem \ref{2ekr} to conclude that $G$ is $2$-EKR while the
result is vacuously true for $r\geq 3$. So, let $n\geq 3$. Let
$\mathcal{A}\subseteq \mathcal{J}^r(G)$ be an intersecting family of
maximum cardinality. Let the vertices of $G_n=K_s$ be labeled from
$n_1$ to $n_s$ (let $n_1$ be the connecting vertex which also
belongs to $G_{n-1}$). Define the compression operation $c$ on $G$
and the clique $K_s$ as before. Let the families $\mathcal{B}$,
$\{\mathcal{C}_i\}_{i=2}^s$, $\{\mathcal{D}_i\}_{i=2}^s$,
$\{\mathcal{E}_i\}_{i=2}^s$ be defined as in Lemma \ref{tl1}.

Clearly, for $G$, $\mathcal{D}_i=\emptyset$ for each $2\leq i\leq
s.$ So, by Lemma \ref{tl1},
$$\mathcal{A}=\mathcal{B}+\sum_{i=2}^s|\mathcal{C}_i|+\sum_{i=2}^s|\mathcal{E}
_i|.$$

Let $G_{n-1}=K_t.$ Let the vertices of $G_{n-1}$ be labeled
from $m_1$ to $m_t$($t\leq s$), with $m_t=n_1$.
For every $1\leq i\leq t-1$ and $2\leq j\leq s$
define a set $\mathcal{H}_{ij}$ of families by
$$\mathcal{H}_{ij}=\{A\in \mathcal{A}:m_i\in A, n_j\in A \}.$$

We note that $\bigcup_{i=1}^{t-1}H_{ij}=\mathcal{E}_j$ for each $2\leq j\leq s$,
and since each of the $\mathcal{H}_{ij}$'s are also disjoint, we have
$$\sum_{i=2}^s|\mathcal{E}_i|=\sum_{1\leq i\leq t-1,2\leq j\leq
s}|\mathcal{H}_{ij}|.$$

Now, consider a complete bipartite graph $K_{t-1,s-1}$. Label the
vertices in part $1$ from $m_1$ to $m_{t-1}$ and vertices in part
$2$ from $n_2$ to $n_{s}$.

Partition the edges of the bipartite graph $K_{t-1,s-1}$ into $s-1$
matchings, each of size $t-1$.
For each matching $M_k$ ($1\leq k\leq s-1$), define the family
$$\mathcal{F}_{M_k}=\bigcup_{i,j,m_i n_j\in M_k}(\mathcal{H}_{ij}-\{n_j\}),$$
where a family $\mathcal{H}-\{a\}$
is obtained from $\mathcal{H}$ by removing $a$ from all its sets.
Then of course
$$\sum_{1\leq i\leq t-1,2\leq j\leq s}
|\mathcal{H}_{ij}|=\sum_{1\leq i\leq s-1}|\mathcal{F}_{M_i}|.$$

For each $1\leq k\leq s-1$, $F_{M_k}$ is a disjoint union and is
intersecting. The
intersecting property is obvious if both sets are in
the same $\mathcal{H}_{ij}-\{n_j\}$ since they contain $m_i$. If in
different such sets, adding distinct elements which were removed
(during the above operation) gives sets in the original family which
are intersecting.

Finally, if we consider families $C_{n_i}\cup F_{M_{i-1}}\subseteq
\mathcal{J}^{(r-1)}(G-G_n)$ for $2\leq i\leq s$, each such family is
a disjoint union. It is also intersecting since for $C\in C_{n_i}$
and $F\in F_{M_{i-1}}$, $C\cup \{n_1\}$ and $F\cup \{n_j\}$ for some
$j\neq 1$ gives us sets in $\mathcal{A}$. So, we get

\begin{eqnarray*}
|\mathcal{A}|&=&|\mathcal{B}|+\sum_{i=2}^{s}|\mathcal{C}_{n_i}|+\sum_{1\leq
i\leq s-1,2\leq j\leq s}|\mathcal{H}_{ij}| \nonumber \\
&=&|\mathcal{B}|+\sum_{i=2}^{s}|\mathcal{C}_{n_i}|+\sum_{1\leq i\leq
s-1}|\mathcal{F}_{M_i}| \nonumber \\
&=&|\mathcal{B}|+\sum_{i=2}^{s}|(C_{n_i}\cup \mathcal{F}_{M_{i-1}})|
\nonumber
\\
&\leq& \mathcal{J}_1^r(G/G_n)+(s-1)\mathcal{J}_1^{(r-1)}(G-G_n)
\nonumber \\
&=& \mathcal{J}_1^r(G).
\end{eqnarray*}

The last inequality is obtained by partitioning the star based on
whether or not it contains one of $\{n_2,\ldots,n_s\}.$
\end{proof}
\subsection{Proof of Theorem \ref{mthm2}}
\begin{proof}
We do induction on $r$.
Since the case $r=1$ is trivial, let $r\geq 2$.
Let $G$ be a disjoint union of $2$ \textit{special} chains $G'$ and $G''$,
with lengths $n_1$ and $n_2$ respectively.
We will do induction on $n=n_1+n_2$.
If $n=0$, the result holds trivially if $r=2$ and vacuously if $r\geq 3$.
So, let $n\geq 1$.
If $n=1$ or if $n_1=n_2=1$, then $\alpha(G)=2$.
In this case, $G$ is vacuously $r$-EKR  for $r\geq 3$.
Also, if $r=2$, then we are done by Theorem \ref{2ekr}.
So, without loss of generality,
we assume that $G_1$ has length at least $2$.
We can now proceed as in the proof of Theorem \ref{mthm}.
\end{proof}
\section{Bipartite graphs}\label{s5}

\subsection{Trees}
In this section, we give a proof of Theorem \ref{tstar},
which states that for a given tree $T$ and $r\leq 4$,
there is a maximum star family centered at a leaf of $T$.
\begin{proof}
The statement is trivial for $r=1$. If $r=2$, we use the fact that
for any vertex $v$, $|\mathscr{J}^{2}_v(T)|=n-1-d(v),$ where $d(v)$
is the degree of vertex $v$, and thus it will be maximum when $v$ is
a leaf.

Let $3\leq r\leq 4.$ Let $v$ be an internal vertex($d(v)\geq 2)$ and
let $\mathscr{A}=\mathscr{J}^{r}_v(T)$ be the star centered at $v$.
Consider $T$ as a tree rooted at $v$. We find an injection $f$ from
$\mathscr{A}$ to a star centered at some leaf. Let $v_1$ and $v_2$
be any two neighbors of $v$ and let $u$ be a leaf with neighbor $w$.
Let $A\in \mathscr{A}$.
\begin{enumerate}
\item If $u\in A$, then let $f(A)=A$.
\item If $u\notin A$, then we consider two cases.
\begin{enumerate}
\item If $w\notin A$, let $f(A)=A\setminus \{v\} \cup \{u\}$.
\item If $w\in A$, then $B=A\setminus \{w\} \cup \{u\}\in
\mathscr{A}$. We consider the following two cases separately.

\begin{itemize}
\item $r=3$

Let $A=\{v,w,x\}$. We know that $x$ cannot be connected to both
$v_1$ and $v_2$ since that would result in a cycle.
Without loss of generality,
suppose that
$xv_1\notin E(T)$. Then, let $f(A)=A\setminus \{v,w\} \cup
\{u,v_1\}$.

\item $r=4$

Let $A=\{v,w,w_1,w_2\}$. We first note that if there is a leaf at
distance two from $v$, then by using $1$ and $2(a)$ above, we can
show that the size of the star at this leaf is at least as much as
the given star. We again consider two cases.
\begin{itemize}
\item

Suppose that $\{v_1,v_2\}\not\subseteq N(w_1)\cup N(w_2)$.
By symmetry,
suppose $v_1\notin N(w_1)\cup N(w_2)$.
In this case, let $f(A)=A\setminus \{w,v\} \cup \{u,v_1\}$.

\item

Suppose that $\{v_1,v_2\}\subseteq N(w_1)\cup N(w_2)$.
Label so that
$v_i\in N(w_i)$ for $1\leq i\leq 2$ (in particular, $v_i$ is the
parent of $w_i$). Since neither $w_1$ nor $w_2$ is a leaf, they have
at least one child, say $x_1$ and $x_2$, respectively. In this case,
let $f(A)=\{u,x_1,x_2,v_1\}$. For this case, injection is less
obvious. We show it by contradiction as follows.
Let $f(\{v,w,w_1,w_2\})=f(\{v,w,y_1,y_2\})=\{u,x_1,x_2,v_1\}$.
We may assume that
$y_1\neq w_1$ and let $y_i$ be the child of $v_i$ and $x_i$
be the child of $y_i$; then certainly
$v_1w_1x_1y_1v_1$ gives a cycle
in $T$, a contradiction.
\end{itemize}
\end{itemize}
\end{enumerate}
\end{enumerate}
\end{proof}
\begin{figure}[ht]
\centering
\includegraphics[height=5cm]{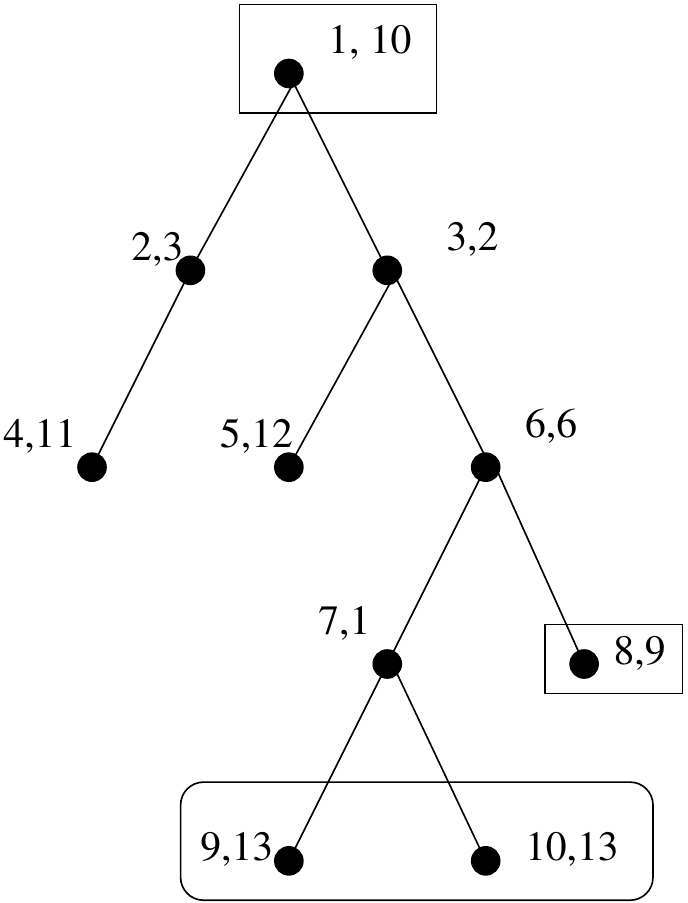}
\caption{Tree $T$ on $10$ vertices, $r=5$.}\label{figtree}
\end{figure}
We believe that Conjecture \ref{stree} holds true for all $r$.
However,
it is harder to prove because it is not true that every leaf centered star
is bigger than every non-leaf centered star; an example is illustrated in Figure \ref{figtree}.

For each vertex, the first number denotes the label,
while the second number denotes the size of the star centered at that vertex.
We note that $\mathcal{J}^5_8(T)=9$, while $\mathcal{J}^5_1(T)=10$.
However, we note that the maximum sized stars are still centered at leaves $9$ and $10$.

We also point out that this example satisfies an interesting property,
first observed by Colbourn \cite{col}.

\begin{prp}\label{degsort}
Let $G$ be a bipartite graph with bipartition $V=\{V_1,V_2\}$ and let $r\geq 1$. We say that $G$ has the \emph{bipartite degree sort} property if for all $x,y\in V_i$ with $d(x)\leq d(y)$, $\mathcal{J}^r_x(T)\geq \mathcal{J}^r_y(T)$.
\end{prp}

Not all bipartite graphs satisfy this property. Neiman \cite{nm} constructed the following counterexample, with $r=3$.

Fix positive integers $t$ and $k$ with $t\geq 2k\geq 4$. Let $G=G_{t,k}$ be the graph obtained from the complete bipartite graph $K_{2,t}$ and $P_{2k}$ by identifying one endpoint of $P_{2k}$ to be a vertex in $K_{2,t}$ lying in the bipartition of size $2$. Let $x$ be the other endpoint of the path, and let $y$ be a vertex in $K_{2,t}$ lying in the bipartition of size $t$, of degree $2$. An example is shown in Figure \ref{treex2}.
\begin{figure}[ht]
\centering
\includegraphics[height=5cm]{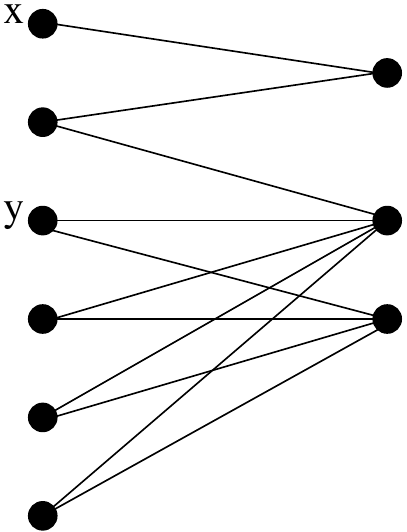}
\caption{$G_{4,2}$} \label{treex2}
\end{figure}

Let $Y=\mathcal{J}^3_y(G)$ and let $X=\mathcal{J}^3_x(G)$. We have, for $t\geq 2k$,
\begin{eqnarray}
Y-X &=& \mathcal{J}^2(G\downarrow y)-\mathcal{J}^2(G\downarrow x) \nonumber \\
&=& {t+2k-2 \choose 2}-|E(G\downarrow y)|-{t+2k-1 \choose 2}+|E(G\downarrow x)| \nonumber \\
&=& {t+2k-2 \choose 2}-{t+2k-1 \choose 2}+2t-1 \nonumber \\
&=& (t+2k-2)(-1)+2t-1 \nonumber \\
&=& t-2k+1 \nonumber \\
&>& 0.
\end{eqnarray}

We show that a similar construction acts as a counterexample for all $r>3$. Given $r>3$, consider the graph $G=G_{t,2}$, $t>r$. Let $x$ and $y$ be as defined before, with $d(x)=1$ and $d(y)=2$. Let $Y=\mathcal{J}^r_y(G)$ and $X=\mathcal{J}^r_x(G)$. We have $X={t+1 \choose r-1}$ and $Y={t+1 \choose r-1}+{t-1 \choose r-2}$. It follows that, for $t>r$, $Y>X$.

If we consider trees, it can be seen that the tree in Figure \ref{figtree} satisfies this property.
It is also not hard to show that the path $P_n$ satisfies this property, since for all $r\geq 1$, $\mathcal{J}^r_{v_1}(P_n)=\mathcal{J}^r_{v_n}(P_n)\geq \mathcal{J}^r_{v_i}(P_n)$ holds for each $2\leq i\leq n-1$.

Another infinite family of trees that satisfy the property are the depth-two stars shown in Figure \ref{treex} below.
\begin{figure}[ht]
\centering
\includegraphics[height=4cm]{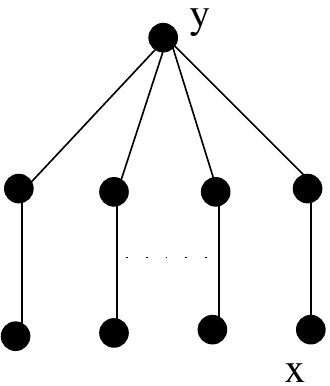}
\caption{Tree $T$ on $2n+1$ vertices which satisfies Conjecture \ref{degsort}.}\label{treex}
\end{figure}

Let $Y=\mathcal{J}^r_y(T)$ and let $X=\mathcal{J}^r_x(T)$. Then, we have $Y=\mathcal{J}^{r-1}(T\downarrow y)={n \choose r-1}$ and $X={n-1 \choose r-2}+2^{r-1}{n-1 \choose r-1}$. It is then easy to note that when $r\geq 1$, $X-Y\geq 0$.

However, it turns out that not all trees satisfy this property. A counterexample, for $n=10$ and $r=5$, is shown in Figure \ref{ct}.
\begin{figure}[ht]
\centering
\includegraphics[height=5cm]{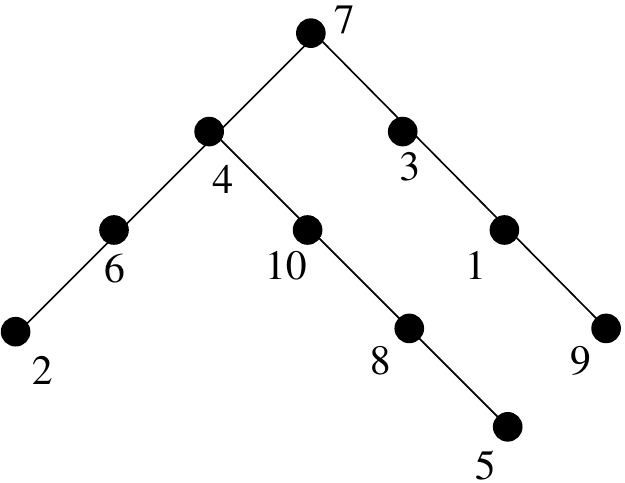}
\caption{Tree $T_1$ which does not satisfy Property \ref{degsort}} \label{ct}
\end{figure}
Observe that the vertex labeled $8$, with degree $2$, and the vertex labeled $4$, with degree $3$, lie in the same partite set, but we have $\mathcal{J}^5_4(T_1)=\{\{2,3,4,8,9\},\{2,3,4,5,9\}\}$ and $\mathcal{J}^5_8(T_1)=\{\{2,3,4,8,9\}\}$.
Note that, in this example, $r=\frac{n}{2}$. Another counterexample, with $n=12$ and $r=5$, is shown in Figure \ref{ct2}.
\begin{figure}[ht]
\centering
\includegraphics[height=4cm]{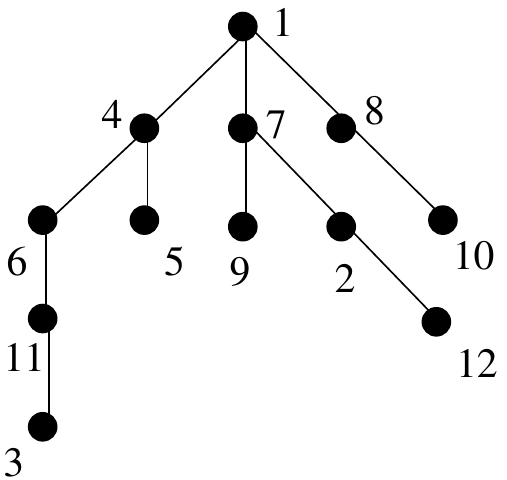}
\caption{Tree $T_2$ which does not satisfy Property \ref{degsort}} \label{ct2}
\end{figure}

We see that the vertices labeled $1$ and $2$, with degrees $3$ and $2$ respectively, lie in the same partite set. It can be checked that $|\mathcal{J}^5_1(T_2)|=32$ and $|\mathcal{J}^5_2(T_2)|=28$.
\subsection{Ladder graphs}
In this section, we give a proof of Theorem \ref{ladder},
which states that the ladder graph $L_n$ is $3$-EKR for all $n\geq 1$.
First, we state and prove a claim about maximum star families in $L_n$.

Let $G=L_n$ be a ladder with $n$ rungs. Let the rung edges be
$x_iy_i$($1\leq i \leq n$). First, we show that
$\mathscr{J}^r_{x}(G)$ is a maximum sized star for $x\in
\{x_1,y_1,x_n,y_n\}$.
\begin{claim}
If $G$ is a ladder with $n$ rungs,
$\mathscr{J}^r_{x}(G)$ is a maximum sized star for $x\in
\{x_1,y_1,x_n,y_n\}$.
\end{claim}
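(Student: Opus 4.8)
The plan is to first collapse the four corners into one. The ladder $L_n$ carries two commuting automorphisms, the rail swap $x_j\leftrightarrow y_j$ and the rung reversal $j\leftrightarrow n+1-j$, and the group they generate moves $x_1$ exactly onto $y_1,x_n,y_n$. Hence all four corner stars have the same size, and it suffices to prove $|\mathscr{J}^r_{x_1}(G)|\geq|\mathscr{J}^r_v(G)|$ for every vertex $v$; applying the rail swap and rung reversal again, I may assume $v=x_i$ with $2\leq i\leq\lceil n/2\rceil$.

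To organize the comparison I would encode independent sets column by column. Since $x_jy_j\in E(G)$, each rung contributes at most one vertex, so an independent set is a length-$n$ word over $\{\top,\bot,\ast\}$ (top, bottom, empty) in which no two consecutive columns carry the same non-empty symbol, the only rail edges being $x_jx_{j+1}$ and $y_jy_{j+1}$. Thus an independent $r$-set is a choice of $r$ occupied columns together with, for each maximal block of consecutive occupied columns, one of its two alternating labelings, and $\mathscr{J}^r_{x_1}(G)$ is exactly the set of such words whose first column is occupied with label $\top$. This description makes stars easy to parametrize and is the natural setting in which to build an injection.

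With this model in hand I would construct an explicit injection $\phi\colon\mathscr{J}^r_{x_i}(G)\to\mathscr{J}^r_{x_1}(G)$ that slides the configuration toward the left corner: take the maximal block of consecutive occupied columns containing column $i$, translate it so that it begins at column $1$, and rotate its (alternating) labeling so that column $1$ receives $\top$, which is always possible since the block admits both alternating labelings. When no occupied column lies to the left of this block, $\phi$ lands in $\mathscr{J}^r_{x_1}(G)$ and is readily checked to be injective, since the preserved right-hand part of the word together with the length of the shifted block recovers the original configuration. Equivalently, at the level of vertices the default map is the clean exchange $x_i\mapsto x_1$, valid whenever the corner partner $y_1$ is absent from the set.

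The main obstacle is precisely the interference case, namely when occupied columns already sit to the left of the block containing $i$ (equivalently, when $y_1$ or a nearby vertex already lies in the $r$-set), for then a naive translation collides with vertices that are present. Here $\phi$ must be repaired by a compensating exchange that re-places the displaced vertices on the opposite rail so as to preserve independence, the cardinality $r$, and injectivity; the alternating-label structure of each block is exactly what makes such a compensation available. Verifying that these corrections can always be carried out, and that the corrected map stays injective when images arising from different cases are compared, is the technical heart of the proof. Should the global injection prove unwieldy, I would fall back on induction on $n$, using the deletion identity $|\mathscr{J}^r_v(G)|=|\mathscr{J}^{(r-1)}(G\downarrow v)|$ and the splitting $|\mathscr{J}^r(H)|=|\mathscr{J}^r(H-w)|+|\mathscr{J}^{(r-1)}(H\downarrow w)|$ on the last column, comparing the connected graph $G\downarrow x_1$ against the disconnected graphs $G\downarrow x_i$.
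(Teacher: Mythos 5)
There is a genuine gap, and it is not only in the ``interference'' case you defer: the clean-case map you describe is not injective, contrary to your ``readily checked'' claim. Your $\phi$ translates only the maximal occupied block containing column $i$ to the left edge and then \emph{forces} the relabeling so that column $1$ receives $\top$; this forgets both the original position of the block and its original alternating labeling. Concretely, take the star at $x_3$ with $r=3$ and $n\geq 6$: the sets $A_1=\{y_2,x_3,x_6\}$ (block in columns $2$--$3$, labels $\bot\top$) and $A_2=\{x_3,y_4,x_6\}$ (block in columns $3$--$4$, labels $\top\bot$) are both independent, both lie in $\mathscr{J}^3_{x_3}(L_n)$, both are in your no-interference case, and both map to $\{x_1,y_2,x_6\}$: the shifted blocks have the same length, the preserved right-hand parts coincide, and the forced $\top$ at column $1$ erases the labeling difference. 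So ``right-hand part plus block length'' does not recover the configuration, and the claimed inverse does not exist. Together with the unresolved interference case --- which you correctly identify as the technical heart but do not carry out --- the proposal does not establish the claim. The fallback induction is also only gestured at, and it is not obviously easier: $G\downarrow x_i$ for interior $i$ is a disjoint union of two ladders, so the comparison with the connected $G\downarrow x_1$ requires a genuine argument.

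The paper's proof shows how to repair exactly this defect: instead of sliding one block, it shifts the \emph{entire} set by the cyclic map $f^{n-k}$ ($x_i\mapsto x_{i\bmod n+1}$, $y_i\mapsto y_{i\bmod n+1}$), which is a permutation of $V(L_n)$ and hence automatically injective on sets --- no information is lost. The price is that the wrap-around can destroy independence, but only at the seam, and only when $\{x_1,x_n\}\subseteq A$ or $\{y_1,y_n\}\subseteq A$ (pairs such as $\{x_n,y_1\}$ shift to non-adjacent vertices). Those two exceptional families are handled separately: sets containing $\{x_1,x_n\}$ already lie in the target star and are fixed, and sets containing $\{y_1,y_n\}$ are rail-swapped by $g$. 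Injectivity across the three cases then follows from short observations: a fixed set contains $x_k$ while a rail-swapped one contains $y_k$ but not $x_k$, and if a shifted image contained $\{x_1,x_n\}$ the preimage would contain the adjacent pair $\{x_k,x_{k+1}\}$. If you want to salvage your column-word framework, the repair is to make your translation global (a cyclic translation of the whole word) rather than blockwise, and then your interference case collapses to precisely the paper's two seam cases.
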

\begin{proof}
We prove the claim for $x=x_n$. The claim is obvious if $n\leq 2$,
so suppose $n\geq 3$. Let $\mathscr{A}$ be a star centered at some
$x\in V(G)$.
Without loss of generality,
we assume that $x=x_k$ for some $1<k<n$.
We now construct an injection from $\mathscr{A}$ to
$\mathscr{J}^r_{x_n}(G)$. Define functions $f$ and $g$ as follows.
\begin{displaymath}
   f(x) = \left \{
     \begin{array}{lr}
       x_{i \textrm{ mod n }+1} & \textrm{ if } x=x_{i} \\
       y_{i \textrm{ mod n }+1} & \textrm{ if } x=y_{i} \\
     \end{array}
   \right.
\end{displaymath}

\begin{displaymath}
   g(x) = \left \{
     \begin{array}{lr}
       y_{i} & \textrm{ if } x=x_{i} \\
       x_{i} & \textrm{ if } x=y_{i} \\
     \end{array}
   \right.
\end{displaymath}
Consider the function $f^{n-k}$. For every $A\in \mathscr{A}$,
define $f^{n-k}(A)=\{f^{n-k}(x):x\in A\}$ and similarly for $g$. We
define a function $h:\mathscr{A}\to \mathscr{J}^r_{x_n}(G)$ as
follows.
\begin{displaymath}
   h(A) = \left \{
     \begin{array}{lr}
       A          & \textrm{ if } \{x_1,x_n\} \subseteq A \\
       g(A)       & \textrm{ if } \{y_1,y_n\}\subseteq A \\
       f^{n-k}(A) & \textrm{ otherwise }
     \end{array}
   \right.
\end{displaymath}
Clearly, $x_n\in h(A)$ for every $A\in \mathscr{A}.$ We will
show that $h$ is an injection. Suppose $A,B\in \mathscr{A}$ and
$A\neq B$. We show that $h(A)\neq h(B)$. If both $A$ and $B$ are in
the same category(out of the three mentioned in the definition of
$h$), then it is obvious. So, suppose not. If $\{x_1,x_n\}\subseteq
A$ and $\{y_1,y_n\}\subseteq B$, then $x_k\in h(A)$, but $x_k\notin
h(B)$. Then, let $A$ be in either of the first two categories, and
let $B$ be in the third category. Then, $\{x_1,x_n\}\subseteq h(A)$,
but $\{x_1,x_n\}\not\subseteq h(B)$. This holds because otherwise,
we would have $\{x_k, x_{k+1}\}\subseteq B$, a contradiction.
\end{proof}

We give a proof of Theorem \ref{ladder}.

\begin{proof}
We do induction on the number of rungs. If $n=1$, we have $G=P_2$,
which is trivially $r$-EKR for $r=1$ and vacuously true for $r=2$
and $r=3$. Similarly, for $n=2$, $G=C_4$, so it is trivially $r$-EKR
for each $1\leq r\leq 2$ and vacuously true for $r=3$. So, let
$n\geq 3$. The case $r=1$ is trivial. If $r=2$, since $\delta(G)=2$
and $|G|\geq 6$, we can use Theorem \ref{2ekr} to conclude that $G$
is $2$-EKR. So consider $G$ such that $n\geq 3$ and $r=3$. If
$n=3$, the maximum size of an intersecting family of independent
sets of size $3$ is $1$, so $3$-EKR again holds trivially. So,
suppose $n\geq 4$. Let $G'=L_{n-1}$, $G''=L_{n-2}$. Also, let
$Z=\{x_{n-2},y_{n-2},x_{n-1},y_{n-1},x_n,y_n\}$. Define a function
$c$ as follows.
\begin{displaymath}
   c(x) = \left \{
     \begin{array}{lr}
       x_{n-1} & \textrm{ if } x=x_{n} \\
       y_{n-1} & \textrm{ if } x=y_{n} \\
       x     & \textrm{ otherwise }
     \end{array}
   \right.
\end{displaymath}
Let $\mathscr{A}\subseteq \mathscr{J}^r(G)$ be intersecting.

Define the following families.
$$\mathscr{B}=\{c(A):A\in \mathscr{A} \textrm{ and } c(A)\in
\mathscr{J}^r(G')\}$$
$$\mathscr{C}_1=\{A\setminus \{x_n\}:x_n\in A\in \mathscr{A} \textrm{ and }
A\setminus \{x_n\} \cup
\{x_{n-1}\}\in \mathscr{A}\}$$

$$\mathscr{C}_2=\{A\setminus \{y_n\}:y_n\in A\in \mathscr{A} \textrm{ and }
A\setminus \{y_n\} \cup
\{y_{n-1}\}\in \mathscr{A}\}$$
$$\mathscr{D}_1=\{A\in \mathscr{A}:A\cap Z=\{x_{n-2},x_n\} \}$$

$$\mathscr{D}_2=\{A\in \mathscr{A}:A\cap Z=\{y_{n-2},y_n\} \}$$
$$\mathscr{D}_3=\{A\in \mathscr{A}:A\cap Z=\{x_{n-1},y_n\} \}$$
$$\mathscr{D}_4=\{A\in \mathscr{A}:A\cap Z=\{y_{n-1},x_n\} \}$$
$$\mathscr{D}_5=\{\{x_{n-2}, y_{n-1},x_n\}\}$$
$$\mathscr{D}_6=\{\{y_{n-2}, x_{n-1},y_n\}\}$$

Define the families $\mathscr{E}=\mathscr{C}_1\cup
(\mathscr{D}_1-\{x_n\})$ and $\mathscr{F}=\mathscr{C}_2\cup
(\mathscr{D}_2-\{y_n\})$.
Then both
$\mathscr{E}\subseteq
\mathscr{J}^{r-1}(G'')$ and $\mathscr{F}\subseteq
\mathscr{J}^{r-1}(G'')$.
\begin{prop}\label{plad}
The family $\mathscr{E}$( $\mathscr{F}$) is a disjoint union of
$\mathscr{C}_1$ and $\mathscr{D}_1-\{x_n\}$($\mathscr{C}_2$ and
$\mathscr{D}_2-\{y_n\}$) and is intersecting.
\end{prop}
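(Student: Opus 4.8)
The plan is to verify the two assertions separately: first that $\mathscr{E}$ is a genuine disjoint union of $\mathscr{C}_1$ and $\mathscr{D}_1-\{x_n\}$, and then that the resulting family is intersecting (the argument for $\mathscr{F}$ being entirely symmetric, obtained by swapping the roles of the $x_i$ and $y_i$ rails). To establish disjointness, I would argue that the two constituent families are distinguished by their behavior on the penultimate column. Every set in $\mathscr{C}_1$ arises from a set $A\in\mathscr{A}$ with $x_n\in A$ and $A\setminus\{x_n\}\cup\{x_{n-1}\}\in\mathscr{A}$, so after deleting $x_n$ the surviving set $A\setminus\{x_n\}$ can legitimately accommodate $x_{n-1}$ and in particular contains no vertex forcing $x_{n-2}$ into a specific configuration; by contrast, each set in $\mathscr{D}_1-\{x_n\}$ comes from a set meeting $Z$ exactly in $\{x_{n-2},x_n\}$, hence contains $x_{n-2}$. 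The key point is that a set from $\mathscr{C}_1$ cannot contain $x_{n-2}$ in the same way: if it did, the independence of $A\setminus\{x_n\}\cup\{x_{n-1}\}$ would be violated, since $x_{n-2}$ and $x_{n-1}$ are adjacent along the rail. This separates the two families and gives the disjoint union.

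Next I would prove the intersecting property. Take two sets $P,Q\in\mathscr{E}$. If both lie in $\mathscr{C}_1$, then both omit $x_n$ but their $\mathscr{A}$-preimages $P\cup\{x_n\}$ and $Q\cup\{x_n\}$ share the common element $x_n$, so the intersecting property of $\mathscr{A}$ does not directly help; instead I would use that $P\cup\{x_{n-1}\}$ and $Q\cup\{x_{n-1}\}$ both lie in $\mathscr{A}$ and intersect, and since that common vertex cannot be $x_{n-1}$ for both without lying in $P\cap Q$ already, the intersection is inherited by $P\cap Q$. Symmetrically, if both lie in $\mathscr{D}_1-\{x_n\}$, their preimages each contain $x_{n-2}$, so $P\cap Q\ni x_{n-2}$ and we are done. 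The genuinely delicate case is the mixed one, with $P\in\mathscr{C}_1$ and $Q\in\mathscr{D}_1-\{x_n\}$: here I would restore the deleted elements to recover $P\cup\{x_{n-1}\}\in\mathscr{A}$ and $Q\cup\{x_n\}\in\mathscr{A}$, invoke that these intersect in $\mathscr{A}$, and then argue that the witnessing common vertex cannot be any of the restored rail-vertices $x_{n-1},x_n$ simultaneously, so it must already lie in $P\cap Q$.

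I expect the main obstacle to be precisely this mixed case, because the two families were built by restoring different vertices ($x_{n-1}$ for $\mathscr{C}_1$-preimages versus $x_n$ for $\mathscr{D}_1$-preimages), so the common element guaranteed by $\mathscr{A}$ might a priori be one of these non-shared restored vertices, which would not survive into $P\cap Q$. The resolution hinges on the ladder's local structure: $x_{n-1}$ and $x_n$ are adjacent, so no independent set in $\mathscr{A}$ can contain both, and the $\mathscr{D}_1$ condition $A\cap Z=\{x_{n-2},x_n\}$ forbids $x_{n-1}$ from the $\mathscr{D}_1$-preimage while the $\mathscr{C}_1$ construction forbids $x_n$ from appearing in $P$ together with $x_{n-2}$. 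Tracking which rail-vertices each preimage can and cannot contain, and checking that the intersection witness is forced off these exceptional vertices, is the careful bookkeeping step, but it reduces to the adjacency relations in two consecutive columns of the ladder.
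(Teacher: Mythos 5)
Your disjointness argument, your $\mathscr{D}_1$-versus-$\mathscr{D}_1$ case, and your mixed case are all sound and essentially identical to the paper's proof (the paper likewise separates the two families by noting that every member of $\mathscr{D}_1-\{x_n\}$ contains $x_{n-2}$ while no member of $\mathscr{C}_1$ can, and handles the mixed case by restoring $x_{n-1}$ to the $\mathscr{C}_1$-set and $x_n$ to the $\mathscr{D}_1$-set). However, there is a genuine gap in your $\mathscr{C}_1$-versus-$\mathscr{C}_1$ case. You correctly observe that restoring $x_n$ to both sets is useless, since the preimages $P\cup\{x_n\}$ and $Q\cup\{x_n\}$ trivially share $x_n$; but your proposed substitute --- comparing $P\cup\{x_{n-1}\}$ and $Q\cup\{x_{n-1}\}$ --- has exactly the same defect. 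These two sets always intersect, in $x_{n-1}$, regardless of whether $P\cap Q$ is empty, and $x_{n-1}$ lies in neither $P$ nor $Q$ (each original set contains $x_n$, which is adjacent to $x_{n-1}$, so independence excludes $x_{n-1}$). Hence your clause that the common vertex ``cannot be $x_{n-1}$ for both without lying in $P\cap Q$ already'' is false as stated, and no element of $P\cap Q$ can be extracted from this comparison.

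The repair is the asymmetric restoration you already deploy in the mixed case, and it is precisely what the paper does: compare $P\cup\{x_{n-1}\}$ with $Q\cup\{x_n\}$. Both belong to $\mathscr{A}$ --- the first by the defining condition of $\mathscr{C}_1$, the second because it is the original member of $\mathscr{A}$ from which $Q$ was obtained --- so they intersect. Since $x_{n-1}$ and $x_n$ are adjacent, independence forces $x_n\notin P\cup\{x_{n-1}\}$ (indeed $x_n\notin P$ by construction) and $x_{n-1}\notin Q\cup\{x_n\}$, so the witnessing vertex can be neither restored vertex and must lie in $P\cap Q$. With this one case replaced, your proof coincides with the paper's.
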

\begin{proof}
We prove the proposition for $\mathscr{E}$. The proof for
$\mathscr{F}$ follows similarly. Each $D\in \mathscr{D}_1-\{x_n\}$
contains $x_{n-2}$. However, no member in $\mathscr{C}_1$ contains
$x_{n-2}$. Thus, $\mathscr{E}$ is a disjoint union. To show that it
is intersecting, observe that $\mathscr{C}_1$ is intersecting since
for any $C_1,C_2\in \mathscr{C}_1$, $C_1\cup \{x_{n-1}\}$ and
$C_2\cup \{x_n\}$ are intersecting. Also, $\mathscr{D}_1-\{x_n\}$ is
intersecting since each member of the family contains $x_{n-2}$. So,
suppose $C\in \mathscr{C}_1$ and $D\in \mathscr{D}_1-\{x_n\}$. Then,
$C\cup \{x_{n-1}\}$ and $D\cup \{x_n\}$ are intersecting.
\end{proof}

\begin{prop}\label{p2lad}
If $G=L_n$, where $n\geq 4$, then we have
$$|\mathscr{J}^3_{x_1}(G)|\geq
|\mathscr{J}^3_{x_1}(G')|+2|\mathscr{J}^2_{x_1}(G'')|+2.$$
\end{prop}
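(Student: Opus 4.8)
The plan is to prove the stronger statement that equality holds, by partitioning the star $\mathscr{J}^3_{x_1}(G)$ according to how each independent triple meets the last column $\{x_n,y_n\}$ of the ladder. Recall that $x_1$ is a corner vertex, so $N(x_1)=\{y_1,x_2\}$ and $\deg(x_1)=2$ in every ladder with at least two rungs (in particular in $G$, $G'$ and $G''$).

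First I would count the triples avoiding column $n$. Since $G'=L_{n-1}$ is the subgraph of $G$ induced on columns $1,\dots,n-1$ and independence is inherited by induced subgraphs, these triples are exactly the members of $\mathscr{J}^3_{x_1}(G')$, which contributes the first term on the right-hand side.

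Next, any triple that meets column $n$ contains exactly one of $x_n,y_n$, since $x_ny_n$ is a rung edge. For triples of the form $\{x_1,x_n,z\}$ I would split on the column of $z$: if $z$ lies in columns $1,\dots,n-2$, then $z$ is automatically non-adjacent to $x_n$ (whose neighbours $x_{n-1},y_n$ sit in columns $n-1,n$), so the map $\{x_1,x_n,z\}\mapsto\{x_1,z\}$ is a bijection onto $\mathscr{J}^2_{x_1}(G'')$, where $G''=L_{n-2}$; this yields one copy of $\mathscr{J}^2_{x_1}(G'')$. The only remaining admissible choice is $z=y_{n-1}$, which is non-adjacent to $x_1$ (as $n-1\geq 3$) and to $x_n$, contributing the single triple $\{x_1,x_n,y_{n-1}\}$. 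The symmetric analysis of triples $\{x_1,y_n,z\}$ gives a second copy of $\mathscr{J}^2_{x_1}(G'')$ (from $z$ in columns $1,\dots,n-2$) together with the single extra triple $\{x_1,y_n,x_{n-1}\}$.

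Summing the four pieces gives $|\mathscr{J}^3_{x_1}(G)| = |\mathscr{J}^3_{x_1}(G')| + 2\,|\mathscr{J}^2_{x_1}(G'')| + 2$, which is the asserted inequality (indeed an equality). The only delicate point is the exact enumeration of the admissible third vertex $z$ in the mixed triples: one must check that, beyond the vertices in columns $1,\dots,n-2$ counted by $\mathscr{J}^2_{x_1}(G'')$, precisely one further vertex in column $n-1$ is non-adjacent to the chosen last-column vertex, and that the boundary case $n=4$ (where columns $3,\dots,n-2$ are empty, so $\mathscr{J}^2_{x_1}(G'')$ contributes only $\{x_1,y_2\}$) still produces exactly these two extra triples. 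This asymmetry between the $x_n$ and $y_n$ cases is what accounts for the additive constant $2$, and matching these four pieces to the terms $|\mathscr{J}^3_{x_1}(G')|$, $2\,|\mathscr{J}^2_{x_1}(G'')|$ and $2$ is precisely the form needed to close the induction in the proof of Theorem \ref{ladder}.
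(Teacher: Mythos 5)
Your proof is correct and follows essentially the same decomposition as the paper's: triples avoiding the last rung correspond to $\mathscr{J}^3_{x_1}(G')$, triples whose third vertex lies in columns $1,\dots,n-2$ yield two copies of $\mathscr{J}^2_{x_1}(G'')$ by appending $x_n$ or $y_n$, and the two exceptional triples $\{x_1,x_n,y_{n-1}\}$ and $\{x_1,y_n,x_{n-1}\}$ account for the additive constant $2$. The only difference is that you check the partition is exhaustive and thereby obtain equality, whereas the paper merely exhibits these sets as distinct members of the star and settles for the inequality, which is all that the induction in Theorem \ref{ladder} requires.
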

\begin{proof}
Each $A\in \mathscr{J}^3_{x_1}(G')$ is also a member of
$\mathscr{J}^3_{x_1}(G)$, containing neither $x_n$ nor $y_n$. Each
$A\in \mathscr{J}^3_{x_1}(G'')$ contributes two members to
$\mathscr{J}^3_{x_1}(G)$, $A\cup \{x_n\}$ and $A\cup \{y_n\}$. Also,
$\{x_1,x_{n-1},y_n\},\{x_1, y_{n-1}, x_n\}\in
\mathscr{J}^3_{x_1}(G)$. This completes the argument.
\end{proof}

We have
\begin{eqnarray}
|\mathscr{A}|&=&|\mathscr{B}|+\sum_{i=1}^2|\mathscr{C}_i|+\sum_{i=1}^6|\mathscr{
D}_i|
\nonumber \\
&=&|\mathscr{B}|+|\mathscr{E}|+|\mathscr{F}|+\sum_{i=3}^6|\mathscr{D}_i|.
\label{eqlad}
\end{eqnarray}

We consider two cases.
\begin{itemize}
\item $\mathscr{D}_3\neq \emptyset$ and $\mathscr{D}_4\neq
\emptyset$.

In this case, we must have $\mathscr{D}_3=\{\{a,x_{n-1},y_n\}\}$ and
$\mathscr{D}_4=\{\{a,y_{n-1},x_n\}\}$ for some $a\notin
\{y_{n-2},x_{n-2}\}$ and hence, $|\mathscr{D}_3|=|\mathscr{D}_4|=1$.
Also
$\mathscr{D}_5=\mathscr{D}_6=\emptyset$.
So, using Equation  \ref{eqlad}, Propositions \ref{plad} and \ref{p2lad} and
the induction hypothesis, we have
\begin{eqnarray*}
|\mathscr{A}|&=&|\mathscr{B}|+|\mathscr{E}|+|\mathscr{F}|+\sum_{i=3}^6|\mathscr{
D}_i| \\
&\leq& |\mathscr{J}^r_{x_1}(G')|+2|\mathscr{J}^{r-1}_{x_1}(G'')|+2 \\
&\leq& |\mathscr{J}^r_{x_1}(G)|.
\end{eqnarray*}

\item Without loss of generality,
we suppose that $\mathscr{D}_4=\emptyset.$ If
$\mathscr{D}_3=\emptyset$, then $\sum_{i=3}^6|\mathscr{D}_i|\leq 1$,
so we are done by Proposition \ref{p2lad}.
So, suppose $|\mathscr{D}_4|>0$.
We again consider two cases.
\begin{enumerate}
\item Suppose $\mathscr{C}_1=\emptyset$ and
$\mathscr{D}_1=\emptyset$.

We note that at most one out of $\mathscr{D}_5$ and $\mathscr{D}_6$
can be nonempty. We also note that $|\mathscr{D}_3|\leq 2(n-3)$ and
$\mathscr{J}^2_{x_1}(G'')=2(n-3)-1$. So, using Proposition
\ref{p2lad}
\begin{eqnarray*}
|\mathscr{A}|&=&|\mathscr{B}|+|\mathscr{F}|+|\mathscr{D}_3|+1 \\
&\leq&
|\mathscr{J}^r_{x_1}(G')|+|\mathscr{J}^{r-1}_{x_1}(G'')|+2(n-3)+1 \\
&\leq & |\mathscr{J}^r_{x_1}(G)|.
\end{eqnarray*}

\item Suppose that either $\mathscr{C}_1\neq \emptyset$ or
$\mathscr{D}_1\neq \emptyset.$ Let $C=\{a,b\}\in \mathscr{C}_1$ and
$D\in \mathscr{D}_3$. We have $C\cup \{x_n\} \cap D\neq \emptyset$.
So, we have $D\setminus \{y_n,x_{n-1}\}=\{a\}$ or $D\setminus
\{y_n,x_{n-1}\}=\{b\}.$ So, $|\mathscr{D}_3|\leq 2$. If
$|\mathscr{D}_3|=2$, then $y_{n-2}\notin \{a,b\}$, so
$\mathscr{D}_6=\emptyset.$ Also, $\mathscr{D}_5=\emptyset$ since
$\mathscr{D}_3$ is nonempty. If $|\mathscr{D}_3|\leq 1$, then
$|\mathscr{D}_6|\leq 1$. Thus, in either case,
$\sum_{i=3}^6|\mathscr{D}_i|\leq 2$. Thus, using Equation
\ref{eqlad} and Proposition \ref{p2lad}, we are done. A similar
argument works if $\mathscr{D}_1$ is nonempty.
\end{enumerate}
\end{itemize}
\end{proof}


\begin{thebibliography}{99}

\bibitem{berge} C. Berge, Nombres de coloration de l'hypergraphe
$h$-partie complet, Hypergraph Seminar, Columbus, Ohio 1972,
Springer, New York, 1974, pp. 13-20.

\bibitem{bh} P.Borg, F.Holroyd, The Erd\"{o}s-Ko-Rado properties of various graphs containing singletons,
Discrete Mathematics (2008), doi:10.1016/j.disc.2008.07.021

\bibitem{col} C. Colbourn, personal communication.

\bibitem{df} M. Deza, P. Frankl, Erd\"{o}s-Ko-Rado theorem -- 22
years later, SIAM J. Algebraic Discrete Methods 4 (1983), no. 4,
419-431.

\bibitem{dirac} G. A. Dirac, On rigid circuit graphs. Abh. Math. Sem. Univ. Hamburg 25(1961), 71-76.

\bibitem{ekr} P. Erd\"{o}s, C. Ko, R. Rado, Intersection theorems
for systems of finite sets, Quart. J. Math Oxford Ser. (2) 12(1961),
313-320.

\bibitem{gron} H. D. O. F. Gronau, More on the Erd\"{o}s-Ko-Rado
theorem for integer sequences, J. Combin. Theory Ser. A 35 (1983),
279-288.

\bibitem{hm} A. J. W. Hilton, E. C. Milner, Some intersection
theorems for systems of finite sets, Quart. J. Math. Oxford 18
(1967), 369-384.

\bibitem{hs} A. J. W. Hilton, C. L. Spencer, A graph-theoretical generalization
of Berge's analogue of the Erd\"{o}s-Ko-Rado theorem.  Graph theory
in Paris, Trends Math., Birkh\"{a}user, Basel (2007), 225--242.

\bibitem{hst} F.C. Holroyd, C. Spencer, J. Talbot, Compression and
Erd\"{o}s-Ko-Rado Graphs,  Discrete Math. 293 (2005), no. 1-3,
155-164

\bibitem{ht} F.C. Holroyd, J. Talbot,
Graphs with the Erd\"{o}s-Ko-Rado property,  Discrete Math. 293
(2005), no. 1-3, 165-176.

\bibitem{living} M. L. Livingston, An ordered version of the
Erd\"{o}s-Ko-Rado theorem, J. Combin. Theory Ser. B 26 (1979)
162-165.


\bibitem{meyer} J. C. Meyer, Quelques probl\`{e}mes concernant les
cliques des hypergraphes $k$-complets et $q$-parti $h$-complets,
Hypergraph Seminar, Columbus, Ohio, 1972, Springer, New York, 1974,
pp. 127-139.

\bibitem{moon} A. Moon, An analogue of the Erd\"{o}s-Ko-Rado theorem
for the Hamming schemes $H(n,q)$, J. Combin. Theory Ser. A 32(1982)
386-390.

\bibitem{nm} M. Neiman, personal communication.

\bibitem{tal} J. Talbot, Intersecting families of separated sets, J.
London Math. Soc. (2) 68 (2003), no. 1, 37-51.

\end{thebibliography}
\end{document}